\newtheorem{Definition}{Definition}[section]
\newtheorem{Lemma}[Definition]{Lemma}
\newtheorem{Proposition}[Definition]{Proposition}
\newtheorem{Theorem}[Definition]{Theorem}
\newtheorem{Corollary}[Definition]{Corollary}
\newtheorem{Remark}[Definition]{Remark}
\DeclareMathOperator{\cont}{cont}
\DeclareMathOperator{\pro}{pro}
\DeclareMathOperator{\alg}{alg}
\DeclareMathOperator{\sm}{sm}
\DeclareMathOperator{\un}{un}
\DeclareMathOperator{\lfin}{l fin}
\DeclareMathOperator{\ps}{ps}
\DeclareMathOperator{\ann}{ann}
\DeclareMathOperator{\cris}{cr}
\DeclareMathOperator{\Ban}{Ban}
\DeclareMathOperator{\WD}{WD}
\DeclareMathOperator{\Mod}{Mod}
\DeclareMathOperator{\adm}{adm}
\DeclareMathOperator{\ad}{ad}
\DeclareMathOperator{\Supp}{Supp}
\DeclareMathOperator{\projdim}{projdim}
\DeclareMathOperator{\GL}{GL}
\DeclareMathOperator{\SL}{SL}
\DeclareMathOperator{\Ext}{Ext}
\DeclareMathOperator{\Hom}{Hom}
\DeclareMathOperator{\End}{End}
\DeclareMathOperator{\Sym}{Sym}
\DeclareMathOperator{\Alg}{Alg}
\DeclareMathOperator{\tr}{tr}
\DeclareMathOperator{\m-Spec}{m-Spec}
\newcommand{\Qp}{\mathbb{Q}_p}
\newcommand{\Zp}{\mathbb{Z}_p}
\newcommand{\Fp}{\mathbb F_p}
\newcommand{\QQ}{\mathbb Q}
\newcommand{\OO}{\mathcal O}
\newcommand{\mm}{\mathfrak m}
\newcommand{\pp}{\mathfrak p}
\begin{document}

\title[Local proof of Breuil-M{\'e}zard conjecture]{A local proof of the Breuil-M{\'e}zard conjecture in the scalar semi-simplification case}
\author{Fabian Sander}
\abstract{We give a new local proof of the Breuil-M{\'e}zard conjecture in the case of a reducible representation of the absolute Galois group of $\Qp$, $p>2$, that has scalar semi-simplification, via a formalism of Pa{\v{s}}k{\={u}}nas.}
}
\maketitle
\medskip

\section{Introduction}
Let $p>2$ be a prime number, $k$ be a finite field of characteristic $p$ and $L$ a finite extension of $\Qp$ with ring of integers $\OO$ and uniformizer $\varpi$. Let $\rho\colon G_{\Qp}\rightarrow \GL_2(k)$ be a continuous representation of the form
\begin{gather}\label{rho}
\rho(g)=\begin{pmatrix} \chi(g)& \phi(g)\\ 0 &\chi(g)\end{pmatrix}, \forall g\in G_{\Qp},
\end{gather}
so that the semi-simplification of $\rho$ is isomorphic to $\chi\oplus\chi$.
Let $R^{\square}_{\rho}$ denote the associated universal framed deformation ring of $\rho$ and let $\rho^{\square}$ be the universal framed deformation. For any $\pp\in\m-Spec(R^{\square}_{\rho}[1/p])$, the set of maximal ideals, the residue field $\kappa(\pp)$ is a finite extension of $\Qp$. We denote its ring of integers by $\OO_{\pp}$ and get an associated representation $\rho^{\square}_{\pp}\colon G_{\Qp}\rightarrow \GL_2(\OO_{\pp})$ that lifts $\rho$.
Let $\tau\colon I_{\Qp}\rightarrow \GL_2(L)$ be a representation of the inertia group of $\Qp$ with an open kernel, $\psi\colon G_{\Qp}\rightarrow \OO^{\times}$ a continuous character and let $\mathbf{w}=(a,b)$ be a pair of integers with $b>a$. We say that $\rho^{\square}_{\pp}$ is of $p$-adic Hodge type $(\mathbf{w},\tau,\psi)$ if it is potentially semi-stable with Hodge-Tate weights $\mathbf{w}$, $\det\rho_{\pp}\cong \psi\epsilon$, $\left.\psi\right|_{I_{\Qp}}=\epsilon^{a+b}\det\tau$ and $\left.\WD(\rho^{\square}_{\pp})\right|_{I_{\Qp}}\cong\tau$, where $\epsilon$ is the cyclotomic character and $\WD(\rho^{\square}_{\pp})$ is the Weil-Deligne representation associated to $\rho^{\square}_{\pp}$ by Fontaine \cite{MR1293972}.

By a result of Henniart \cite{He1} there exists a unique smooth irreducible ${K:=\GL_2(\Zp)}$-representation $\sigma(\tau)$ and a modification $\sigma^{\cris}(\tau)$ defined by Kisin \cite[1.1.4]{MR2505297} such that for any smooth absolutely irreducible $\GL_2(\Qp)$-representation $\pi$ with associated Weil-Deligne representation $\text{LL}(\pi)$ via the classical local Langlands correspondence, we have $\Hom_K(\sigma(\tau),\pi)\neq0$ (resp. $\Hom_K(\sigma^{\cris}(\tau),\pi)\neq0$) if and only if $\left.\text{LL}(\pi)\right|_{I_{\Qp}}\cong \tau$ (resp. $\left.\text{LL}(\pi)\right|_{I_{\Qp}}\cong \tau$ and the monodromy operator $N$ on $\text{LL}(\pi)$ is trivial). We have $\sigma(\tau)\ncong \sigma^{\cris}(\tau)$ only if $\tau=\chi\oplus\chi$, in which case $\sigma(\tau)=\tilde{\text{st}}\otimes \chi\circ\det$ and $\sigma^{\cris}(\tau)=\chi\circ\det$, where $\tilde{\text{st}}$ is the Steinberg representation of $\GL_2(\Fp)$, inflated to $\GL_2(\Zp)$, and $\chi$ is considered as a character of $\Zp^{\times}$ via local class field theory.
By enlarging $L$ if necessary, we can assume that $\sigma(\tau)\ (\text{resp.}\ \sigma^{\cris}(\tau))$ is defined over $L$. We define $\sigma(\mathbf{w},\tau):=\sigma(\tau)\otimes \Sym^{b-a-1}L^2\otimes \det^a$ and let $\overline{\sigma(\mathbf{w},\tau)}$ be the semi-simplification of the reduction of a $K$-invariant $\OO$-lattice modulo $\varpi$. One can show that $\overline{\sigma(\mathbf{w},\tau)}$ is independent of the choice of the lattice. For every irreducible smooth finite-dimensional $K$-representation $\sigma$ over $k$ we let $m_{\sigma}(\mathbf{w},\tau)$ denote the multiplicity with which $\sigma$ occurs in $\overline{\sigma(\mathbf{w},\tau)}$. Analogously we define $\sigma^{\cris}(\mathbf{w},\tau):=\sigma^{\cris}(\tau)\otimes \Sym^{b-a-1}L^2\otimes \det^a$ and let $m^{\cris}_{\sigma}(\mathbf{w},\tau)$ denote the multiplicity with which $\sigma$ occurs in $\overline{\sigma^{\cris}(\mathbf{w},\tau)}$.

We prove the following theorem.
\begin{Theorem}
Let $p>2$ and let $(\mathbf{w},\tau,\psi)$ be a Hodge type. There exists a reduced $\OO$-torsion free quotient $R_{\rho}^{\square}(\mathbf{w},\tau,\psi)$ (resp. $R_{\rho}^{\square,\cris}(\mathbf{w},\tau,\psi)$) of $R_{\rho}^{\square}$ such that for all $\pp\in\m-Spec(R_{\rho}^{\square}[1/p])$, $\pp$ is an element of $\m-Spec\big(R_{\rho}^{\square}(\mathbf{w},\tau,\psi)[1/p]\big)$ (resp. $\m-Spec\big(R_{\rho}^{\square,\cris}(\mathbf{w},\tau,\psi)[1/p]\big)$) if and only if $\rho^{\square}_{\pp}$ is potentially semi-stable (resp. potentially crystalline) of p-adic Hodge type $(\mathbf{w},\tau,\psi)$. If $R_{\rho}^{\square}(\mathbf{w},\tau,\psi)$ (resp. $R_{\rho}^{\square,\cris}(\mathbf{w},\tau,\psi)$) is non-zero, then it has Krull dimension $5$.

Furthermore, there exists a four-dimensional cycle ${z(\rho)}$ of $R_{\rho}^{\square}$ such that there are equalities of four-dimensional cycles
\begin{equation}\label{eq:semistable1}
z_4\big(R_{\rho}^{\square}(\mathbf{w},\tau,\psi)/(\varpi)\big)=m_{\lambda}(\mathbf{w},\tau)z(\rho),
\end{equation}
\begin{equation}\label{eq:cr31}
z_4\big(R_{\rho}^{\square,\cris}(\mathbf{w},\tau,\psi)/(\varpi)\big)=m^{\cris}_{\lambda}(\mathbf{w},\tau)z(\rho),
\end{equation}
where $\lambda:=\Sym^{p-2}k^2\otimes \chi\circ\det$.
\end{Theorem}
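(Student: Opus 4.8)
The plan is to run Pa{\v{s}}k{\={u}}nas's formalism for the block of $G:=\GL_2(\Qp)$-representations attached to $\rho$, worked out entirely locally. After twisting by $\chi^{-1}$ we may assume $\chi$ is trivial, at the cost of the displayed twist by $\chi\circ\det$ in $\lambda$. Fix a continuous central character $\zeta$ determined by $\psi$ and work in the category $\mathfrak C$ of pseudo-compact $\OO[[G]]$-modules with central character $\zeta$; let $\mathfrak B$ be the block of $\mathfrak C$ containing the Pontryagin dual of the representation attached to $\rho$ by the mod-$p$ Langlands correspondence. Because $\rho$ has scalar semi-simplification, the reduction $\bar\zeta$ of $\zeta$ is not a square, so no twist of the trivial or Steinberg representation lies in $\mathfrak B$: its irreducible objects are principal series, and there is no supersingular object since $\rho$ is reducible. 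Let $\tilde P$ be a projective generator of $\mathfrak B$ and $E:=\End_{\mathfrak C}(\tilde P)$. Applying Colmez's Montreal functor $\check{\mathbf V}$ to $\tilde P$ produces a deformation of $\rho$ with coefficients in $E$; the local study of $\check{\mathbf V}$ on $\mathfrak B$---which, unlike the generic blocks, has non-commutative $E$ and on which $\check{\mathbf V}$ is not an equivalence---serves to identify $R_\rho^{\square}$ with a matrix algebra over $E$ up to the framing variables, so that $\tilde P$ carries the universal framed deformation.

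Next I would construct the deformation rings as supports of a single module. Let $\Theta\subset\sigma(\mathbf w,\tau)$ be a $K$-stable $\OO$-lattice (resp.\ a lattice in $\sigma^{\cris}(\mathbf w,\tau)$) and put $M(\Theta):=\Hom_{\OO[[K]]}^{\cont}(\tilde P,\Theta^{\vee})^{\vee}$, a finitely generated $R_\rho^{\square}$-module. For a maximal ideal $\pp$ of $R_\rho^{\square}[1/p]$ the fibre of $M(\Theta)$ at $\pp$ computes $\Hom_K(\Theta,\Pi_\pp)$, where $\Pi_\pp$ is the unitary Banach-space representation with $\check{\mathbf V}(\Pi_\pp)\cong\rho^{\square}_\pp$; combining the theorems of Colmez and of Berger--Breuil (potential semi-stability is detected on locally algebraic vectors) with the Henniart--Kisin compatibility recalled above, $M(\Theta)[1/p]$ is supported exactly on the set of those $\pp$ for which $\rho^{\square}_\pp$ is potentially semi-stable (resp.\ potentially crystalline) of $p$-adic Hodge type $(\mathbf w,\tau,\psi)$. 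Let $R_\rho^{\square}(\mathbf w,\tau,\psi)$, resp.\ $R_\rho^{\square,\cris}(\mathbf w,\tau,\psi)$, be the quotient of $R_\rho^{\square}$ by the radical of the annihilator of $M(\Theta)$: it is reduced by construction and $\OO$-torsion free since it embeds into a product of the rings $\OO_\pp$, and $M(\Theta)$ is a faithful module over it. That this ring is zero or of Krull dimension $5$ follows from Kisin's computation of potentially semi-stable deformation rings for $\GL_2(\Qp)$; alternatively, as $\tilde P$ is flat over $\OO[[K]]$ and $\Theta$ is $\OO$-free, $M(\Theta)$ is maximal Cohen--Macaulay over $R_\rho^{\square}(\mathbf w,\tau,\psi)$, and one computes its dimension on the generic fibre.

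The cycle identity is the heart of the argument. Reducing modulo $\varpi$, the semi-simple $\GL_2(\Fp)$-representation $\Theta/\varpi$ satisfies $[\Theta/\varpi]=\sum_\sigma m_\sigma(\mathbf w,\tau)[\sigma]$ in the Grothendieck group, the sum running over Serre weights; since $\tilde P$ is flat over $\OO[[K]]$ and $\Theta$ is $\OO$-free, $M(\Theta)/\varpi\cong M(\Theta/\varpi)$ and $M(-)$ is additive on short exact sequences, so as cycles on $\Spec R_\rho^{\square}/(\varpi)$
\[
z\big(M(\Theta)/(\varpi)\big)=\sum_\sigma m_\sigma(\mathbf w,\tau)\,z\big(M(\sigma)\big).
\]
Two inputs then conclude. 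First, $M(\sigma)\neq 0$ forces $\sigma$ to be a Serre weight of $\rho$, hence to lie in $\mathfrak B$ and to have central character $\bar\zeta$; one checks, using the mod-$p$ Langlands correspondence and the Breuil--Pa{\v{s}}k{\={u}}nas classification, that among Serre weights of central character $\bar\zeta$ the only one contributing a four-dimensional cycle is $\lambda=\Sym^{p-2}k^2\otimes\chi\circ\det$, the remaining ones being either not Serre weights of $\rho$ or attached to deformation conditions of positive codimension, so that $z_4(M(\sigma))=0$ for $\sigma\neq\lambda$; this is precisely where scalar semi-simplicity enters. Set $z(\rho):=z_4(M(\lambda))$, an effective four-dimensional cycle on $\Spec R_\rho^{\square}/(\varpi)$. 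Second, $M(\Theta)$ is faithfully flat of generic rank one over $R_\rho^{\square}(\mathbf w,\tau,\psi)$, whence $z_4(M(\Theta)/(\varpi))=z_4\big(R_\rho^{\square}(\mathbf w,\tau,\psi)/(\varpi)\big)$. Taking four-dimensional parts in the displayed equation and combining these two facts yields \eqref{eq:semistable1}, and running the argument with $\sigma^{\cris}(\mathbf w,\tau)$ yields \eqref{eq:cr31}.

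I expect the main obstacle to be the purely local study of the exceptional block $\mathfrak B$: pinning down the structure of $E=\End_{\mathfrak C}(\tilde P)$ and of Colmez's functor on $\mathfrak B$ precisely enough to obtain the identification with $R_\rho^{\square}$ and, above all, the generic-rank-one statement for $M(\Theta)$ over $R_\rho^{\square}(\mathbf w,\tau,\psi)$, without importing the latter from global automorphy-lifting methods as in earlier treatments of this case. By contrast, the identification of $\lambda$ as the unique Serre weight contributing to the four-dimensional cycle, and the Cohen--Macaulayness and dimension assertions, should be comparatively formal once $\mathfrak B$ and the module $M(\Theta)$ are under control.
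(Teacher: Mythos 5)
Your high-level plan is the right one: run Pa{\v s}k{\=u}nas's local formalism on the block of the principal series $\pi$ attached to $\rho$, and extract the cycle identity from a criterion comparing supports of $M(\Theta)$ and $M(\sigma)$. But there are two substantive gaps and one smaller inaccuracy.

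\textbf{The module is not the right one.} You set $M(\Theta)=\Hom^{\cont}_{\OO\llbracket K\rrbracket}(\tilde P,\Theta^{\vee})^{\vee}$ and then assert an identification of $R^{\square}_{\rho}$ with ``a matrix algebra over $E$ up to the framing variables,'' so that $\tilde P$ carries an $R^{\square}_{\rho}$-module structure. This does not hold: $R^{\square,\psi}_{\rho}$ is merely an algebra over the center $R^{\ps,\psi}\cong Z(\tilde E)$, and the object over which one must run the formalism is not $\tilde P$ but $N:=(R\oplus R)\hat\otimes_{\tilde E,\eta}\tilde P$, where $\eta\colon\tilde E\to M_2(R)$ is induced from the universal framed deformation via Cayley--Hamilton; the framing is what produces the $R$-module structure. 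One then has $M^{\square}(\sigma)\cong (R\oplus R)\hat\otimes_{\tilde E,\eta}M(\sigma)$, and to bound $\dim_R M^{\square}(\sigma)$ one needs the flatness of $R^{\square,\psi}_{\rho}$ over $R^{\ps,\psi}$. This flatness is not formal: the paper proves it by an explicit presentation of both rings (it is here that the hypothesis $p>2$ and the specific shape of $\rho$ enter via the free pro-$p$ presentation of $G_{\Qp}(p)$), and it feeds into the projective-dimension inequality that makes $N$ projective in $\Mod^{\pro}_K(\OO)$ and $M(\Theta)$ Cohen--Macaulay. Your appeal to ``$\tilde P$ flat over $\OO\llbracket K\rrbracket$'' does not substitute for this: projectivity of $N$ over $\OO\llbracket K\rrbracket$ is an output of the criterion, not an input.

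\textbf{The generic rank one step is the theorem, not an observation.} You correctly name it as the ``main obstacle,'' but the proposal does not address it: the step from the support description of $M(\Theta)$ to the cycle identity is exactly conditions c)(i) and c)(ii) of the formalism, which require (i) a density argument for the good locus $\Sigma$ inside $\Supp M(\Theta)$ (ruling out, in positive codimension, absolutely irreducible $\Pi(\kappa(\pp))$ with reducible locally algebraic vectors, and split reducible $\rho^{\square}_{\pp}$), and (ii) a Galois cohomology computation giving $\dim\Hom_{G_{\Qp}}\big(\rho^{\square,\psi}[1/p]/\pp^2,\,\cdot\,\big)=4$ for $\pp\in\Sigma$, which together with a Dospinescu-type (RED) argument yields $\dim\Hom_K\big(V,\Pi(R[1/p]/\pp^2)\big)\le 5$. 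These are precisely the steps that the paper carries out (and that earlier global proofs bypassed via automorphy lifting), and none of them appears in the proposal.

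\textbf{Identification of $\lambda$.} Your route via Serre-weight combinatorics and ``deformation conditions of positive codimension'' is weaker and less direct than what actually holds: $M(\sigma)\ne 0$ forces $\Hom_K(\sigma,\pi)\ne 0$, and the $K$-socle of $\pi$ is already irreducible and equal to $\lambda=\Sym^{p-2}k^2\otimes\chi\circ\det$. So only one $M(\sigma)$ is nonzero at all; there is no need to argue that the others contribute lower-dimensional cycles. Your remark about the reduced central character not being a square is true but not the operative reason: the block consists only of $\pi$ because $\rho^{\semi}\cong\chi\oplus\chi$ is not in the Steinberg/trivial situation, and the weight analysis reduces to a statement about the $\GL_2(\Fp)$-socle of $\mathrm{Ind}^{\GL_2(\Fp)}_B\mathds{1}\otimes\omega^{-1}$.
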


The equality of cycles also implies the analogous equality of Hilbert-Samuel multiplicities. Hence the above theorem proves the Breuil-M{\'e}zard conjecture \cite{MR1944572}, as stated in \cite{MR2505297}, in our case. This case has also been handled by Kisin in \cite{MR2505297} using global methods, see also the errata in \cite{MR3292675}. However, our proof is purely local and the results of this paper, together with works of Pa{\v{s}}k{\=u}nas \cite{MR3306557}, Yongquan Hu and Fucheng Tan \cite{HU1}, the whole conjecture is now proved in the $2$-dimensional case only by local methods, when $p\ge5$.

\section*{Acknowledgement}
This paper was written as a part of my PhD-project. I would like to thank my advisor Prof. Dr. Vytautas Pa{\v{s}}k{\=u}nas for numerous conversations, his patience and his great support. I would also like to thank Yongquan Hu for various helpful comments and suggestions.

\section{Formalism}
We quickly recall a formalism due to Pa{\v{s}}k{\=u}nas used by him to prove the Breuil-M{\'e}zard conjecture for residual representations with scalar endomorphisms in \cite{MR3306557}. Let $R$ be a complete local notherian commutative $\OO$-algebra with residue field $k$. Let $G$ be a $p$-adic analytic group, $K$ be a compact open subgroup and $P$ its pro-$p$ Sylow subgroup. Let $N$ be a finitely generated $R\llbracket K\rrbracket$-module, $V$ be a continuous finite dimensional $L$-representation of $K$, and $\Theta$ be an $\OO$-lattice in $V$ which is invariant under the action of $K$. Let 
\begin{equation}
M(\Theta):=\Hom_{\OO}(\Hom_{\OO\llbracket K\rrbracket}^{\cont}(N,\Hom_{\OO}(\Theta,\OO)),\OO).
\end{equation}
This is a finitely generated $R$-module \cite[Lemma 2.15]{MR3306557}. Let $d$ denote the Krull dimension of $M(\Theta)$. Recall that Pontryagin duality $\lambda\mapsto\lambda^{\vee}$ induces an anti-equivalence of categories between discrete $\OO$-modules and compact $\OO$-modules \cite[(5.2.2)-(5.2.3)]{MR2392026}. For any $\lambda$ in $\Mod^{\sm}_K(\OO)$, the category of smooth $K$-representations on $\OO$-torsion modules, we define 
\begin{equation}
M(\lambda):=\Hom^{\cont}_{\OO\llbracket K\rrbracket}(N,\lambda^{\vee})^{\vee}.
\end{equation}
Then $M(\lambda)$ is also a finitely generated $R$-module \cite[Cor. 2.5]{MR3306557}.
We define $\Mod_G^{\pro}(\OO)$ to be the category of compact $\OO\llbracket K\rrbracket$-modules with an action of $\OO[G]$, such that the restriction to $\OO[K]$ of both actions coincide. Pontryagin duality induces an anti-equivalence of categories between $\Mod^{\sm}_G(\OO)$ and $\Mod_G^{\pro}(\OO)$. For any $R[1/p]$-module $\mathrm{m}$ of finite length, we choose a finitely generated $R$-submodule $\mathrm{m}^0$ with $\mathrm{m}\cong\mathrm{m}^0\otimes_{\OO}L$ and define
 \begin{equation}
 \Pi(\mathrm{m}):=\Hom_{\OO}^{\cont}(\mathrm{m}^0\otimes_RN,L).
 \end{equation} 
 By \cite[Lemma 2.21]{MR3306557}, $\Pi(\mathrm{m})$ is an admissible unitary $L$-Banach space representation of $G$.
\begin{Theorem}[Pa{\v{s}}k{\=u}nas,\cite{MR3306557}]\label{BMC}
Let $\mathfrak{a}$ be the $R$-annihilator of $M(\Theta)$. If the following hold
\begin{enumerate}
\item[(a)] $N$ is projective in $\Mod_K^{\pro}(\OO)$,
\item[(b)] $R/\mathfrak{a}$ is equidimensional and all the associated primes are minimal,
\item[(c)] there exists a dense subset $\Sigma$ of $\Supp M(\Theta)$, contained in $\m-Spec R[1/p]$, such that for all $\mathfrak{n}\in\Sigma$ the following hold:
\begin{enumerate}
\item[(i)] $\dim_{\kappa(\mathfrak{n})}\Hom_K(V,\Pi(\kappa(\mathfrak{n})))=1$,
\item[(ii)] $\dim_{\kappa(\mathfrak{n})}\Hom_K(V,\Pi(R_{\mathfrak{n}}/\mathfrak{n}^2))\leq d$,
\end{enumerate}
\end{enumerate}
then $R/\mathfrak{a}$ is reduced, of dimension $d$ and we have an equality of $(d-1)$-dimensional cycles
\[z_{d-1}(R/(\varpi,\mathfrak{a}))=\sum_{\sigma}m_{\sigma}z_{d-1}(M(\sigma)),\]
where the sum is taken over the set of isomorphism classes of smooth irreducible $k$-representations of $K$ and $m_{\sigma}$ is the multiplicity with which $\sigma$ occurs as a subquotient of $\Theta/\varpi$.
\end{Theorem}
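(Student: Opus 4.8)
The plan is to run Pa{\v{s}}k{\=u}nas's argument. Write $\mathfrak p_1,\dots,\mathfrak p_r$ for the minimal primes of $R/\mathfrak a$. Since $\mathfrak a=\ann_R M(\Theta)$ and $M(\Theta)$ is finitely generated, $\Supp M(\Theta)=\Spec(R/\mathfrak a)$, so by (b) the $\mathfrak p_j$ are precisely the associated primes of $R/\mathfrak a$ and $\dim R/\mathfrak p_j=d$ for all $j$ (recall $d=\dim M(\Theta)=\dim R/\mathfrak a$). As $M(\Theta)=\Hom_{\OO}(-,\OO)$ is $\OO$-torsion free, no $\mathfrak p_j$ contains $\varpi$, whence $\dim M(\Theta)/\varpi=\dim (R/\mathfrak a)/\varpi=d-1$. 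The assertion to be proved is the conjunction of:
\begin{enumerate}
\item[(A)] $(R/\mathfrak a)_{\mathfrak p_j}$ is a field for every $j$ --- equivalently, by (b) and Serre's criterion, $R/\mathfrak a$ is reduced (and of dimension $d$);
\item[(B)] $z_{d-1}(R/(\varpi,\mathfrak a))=z_{d-1}(M(\Theta)/\varpi)$;
\item[(C)] $z_{d-1}(M(\Theta)/\varpi)=\sum_{\sigma}m_{\sigma}\,z_{d-1}(M(\sigma))$.
\end{enumerate}

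Statement (C) is formal. By (a) the functor $\Hom^{\cont}_{\OO\llbracket K\rrbracket}(N,-)$ is exact, hence so is $\lambda\mapsto M(\lambda)$ on smooth $\varpi$-torsion $K$-representations, and together with the $\OO$-torsion freeness of $\Hom_{\OO}(\Theta,\OO)$ one obtains a natural isomorphism $M(\Theta)/\varpi\cong M(\Theta/\varpi)$. Picking a $K$-stable composition series of the finite-length $k[K]$-module $\Theta/\varpi$, in which the isomorphism class $\sigma$ occurs with total multiplicity $m_{\sigma}$, and applying the exact functor $M$ gives a filtration of $M(\Theta/\varpi)$ with graded pieces the $M(\sigma)$, all of dimension $\le d-1$; additivity of $z_{d-1}$ on short exact sequences then yields (C).

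The heart of the matter is (A). The input is the reconstruction isomorphism of \cite{MR3306557}: for a finite-length $R[1/p]$-module $\mathrm m$ there is a natural identification $\Hom_K(V,\Pi(\mathrm m))\cong\Hom_R(M(\Theta),\mathrm m)$, so that for $\mathrm m=\kappa(\mathfrak n)$ its $\kappa(\mathfrak n)$-dimension is the minimal number of generators of $M(\Theta)_{\mathfrak n}$. I use density of $\Sigma$ to reach the generic points $\mathfrak p_j$: for each $j$ the subset $U_j:=\Supp M(\Theta)\setminus\bigcup_{i\neq j}V(\mathfrak p_i)$ is nonempty and open, hence contains some $\mathfrak n\in\Sigma$, and then $\mathfrak p_j$ is the unique minimal prime of $R/\mathfrak a$ below $\mathfrak n$. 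By (c)(i) and the reconstruction isomorphism $M(\Theta)_{\mathfrak n}$ is cyclic, and since the annihilator of a finitely generated module commutes with localization, $M(\Theta)_{\mathfrak n}\cong R_{\mathfrak n}/\mathfrak a_{\mathfrak n}=(R/\mathfrak a)_{\mathfrak n}$. Applying the reconstruction isomorphism to $\mathrm m=R_{\mathfrak n}/\mathfrak n^{2}$, a short computation gives $\dim_{\kappa(\mathfrak n)}\Hom_K(V,\Pi(R_{\mathfrak n}/\mathfrak n^{2}))\ge 1+\dim_{\kappa(\mathfrak n)}(\mathfrak m/\mathfrak m^{2})$, with $\mathfrak m$ the maximal ideal of $(R/\mathfrak a)_{\mathfrak n}$. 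Since $\dim_{\kappa(\mathfrak n)}(\mathfrak m/\mathfrak m^{2})\ge\dim (R/\mathfrak a)_{\mathfrak n}=d-1$ (using (b) and the completeness of $R$), hypothesis (c)(ii) forces equality there, so $(R/\mathfrak a)_{\mathfrak n}$ is regular, in particular a domain. As $\mathfrak p_j$ is the only minimal prime below $\mathfrak n$ and $R/\mathfrak a$ has no embedded primes, $(\mathfrak p_j)_{\mathfrak n}=0$, so $(R/\mathfrak a)_{\mathfrak p_j}$ --- an Artinian localization of the domain $(R/\mathfrak a)_{\mathfrak n}$ --- is a field. This proves (A); along the way one gets $M(\Theta)_{\mathfrak p_j}\cong(R/\mathfrak a)_{\mathfrak p_j}=\kappa(\mathfrak p_j)$, i.e. $M(\Theta)$ has generic multiplicity one along every component.

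For (B): by (A), $z_d(R/\mathfrak a)=\sum_j[\mathfrak p_j]$, and by generic multiplicity one $z_d(M(\Theta))=\sum_j\ell_{R_{\mathfrak p_j}}(M(\Theta)_{\mathfrak p_j})[\mathfrak p_j]=\sum_j[\mathfrak p_j]$; thus $z_d(M(\Theta))=z_d(R/\mathfrak a)$. For a finitely generated $\OO$-torsion-free $R$-module $N$ of dimension $\le d$ the cycle $z_{d-1}(N/\varpi)$ depends only on $z_d(N)$ --- it equals $\sum_{\dim R/\mathfrak p=d}\ell_{R_{\mathfrak p}}(N_{\mathfrak p})\,z_{d-1}((R/\mathfrak p)/\varpi)$ by the associativity formula for Hilbert--Samuel multiplicities --- so $z_{d-1}(M(\Theta)/\varpi)=z_{d-1}((R/\mathfrak a)/\varpi)=z_{d-1}(R/(\varpi,\mathfrak a))$, and (A)+(B)+(C) give the theorem. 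The main obstacle is (A): establishing the precise reconstruction isomorphism and the identity $M(\Theta)/\varpi\cong M(\Theta/\varpi)$ --- this is where the projectivity hypothesis (a) and the Pontryagin-duality bookkeeping genuinely enter --- and then extracting regularity from the length bound (c)(ii); the passage from $\Sigma$ to the generic points $\mathfrak p_j$ is comparatively routine.
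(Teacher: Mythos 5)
This statement is Theorem~2.3 in Pa{\v{s}}k{\=u}nas's paper \cite{MR3306557} (cited there as Theorem~2.26 or thereabouts); the present paper quotes it without proof, so there is no internal proof to compare against. Your proposal is a correct reconstruction, and it follows essentially the same route as Pa{\v{s}}k{\=u}nas's original argument: the reconstruction identity relating $\Hom_K(V,\Pi(\mathrm m))$ to the fibres of $M(\Theta)$ (his Prop.~2.22), Nakayama plus (c)(i) to get cyclicity and hence $M(\Theta)_{\mathfrak n}\cong (R/\mathfrak a)_{\mathfrak n}$, the tangent-space bound from (c)(ii) to get regularity, density of $\Sigma$ to reach the generic points, and then the filtration/associativity bookkeeping for the cycle identity. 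Two small points worth tightening if you wrote this out in full: be precise about the exact form of the reconstruction isomorphism (whether it is $\Hom_R(M(\Theta),\mathrm m)$ or $\mathrm m\otimes_R M(\Theta)$ up to duals---your ``short computation'' at $R_{\mathfrak n}/\mathfrak n^2$ goes through either way, but the covariant tensor form makes the inequality $1+\dim_{\kappa(\mathfrak n)}\mathfrak m/\mathfrak m^2\le d$ immediate rather than requiring the case split on whether $\mathfrak a_{\mathfrak n}\subseteq\mathfrak n^2$), and justify that $M(\Theta)/\varpi\cong M(\Theta/\varpi)$ by running the exact functor $\Hom^{\cont}_{\OO\llbracket K\rrbracket}(N,-)$ through $0\to\Theta^d\xrightarrow{\varpi}\Theta^d\to\Theta^d/\varpi\to 0$ and dualizing, which is exactly where hypothesis (a) enters.
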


We want to specify the following criterion in our situation, which allows us to check the first two conditions of Theorem \ref{BMC}.

\begin{Theorem}[Pa{\v{s}}k{\=u}nas,\cite{MR3306557}]\label{Criterion}
Suppose that $R$ is Cohen-Macaulay and $N$ is flat over $R$. If
\begin{equation}
\projdim_{\OO\llbracket P\rrbracket}k\hat \otimes_RN+\underset{\sigma}{\max}\{\dim_RM(\sigma)\}\leq \dim R, \label{eq:1}
\end{equation}
where the maximum is taken over all the irreducible smooth $k$-representations of $K$, then the following holds:
\begin{enumerate}
\item[(o)] (\ref{eq:1}) is an equality,
\item[(i)] $N$ is projective in $\Mod_K^{\pro}(\OO)$,
\item[(ii)] $M(\Theta)$ is a Cohen-Macaulay module,
\item[(iii)] $R/\ann_RM(\Theta)$ is equidimensional, and all the associated prime ideals are minimal.
\end{enumerate}
\end{Theorem}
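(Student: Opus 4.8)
The plan is to transport projective-dimension data about $N$ over the Iwasawa algebra of the pro-$p$ subgroup $P$ into Krull-dimension data about the $R$-modules $M(\sigma)$, using the Cohen-Macaulayness of $R$ and the $R$-flatness of $N$ in tandem. First I would pass from $K$ to $P$: since $[K:P]$ is a unit in $\OO$, the ring $\OO\llbracket K\rrbracket$ is pro-free over $\Lambda:=\OO\llbracket P\rrbracket$ and averaging over $K/P$ is available, so $N$ is projective in $\Mod_K^{\pro}(\OO)$ exactly when $N$ is projective over $\Lambda$, while $M(\Theta)$ and each $M(\sigma)$ are, as $R$-modules, direct summands of the corresponding constructions for $P$; thus every assertion is insensitive to this replacement. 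Passing to the $\OO$-flat situation (to which one reduces, and which holds in the applications), $\Lambda_R:=R\llbracket P\rrbracket=R\hat\otimes_\OO\Lambda$ is pro-free over $\Lambda$. Throughout I will use that $\Lambda$ (resp.\ $\Lambda/\varpi=k\llbracket P\rrbracket$) is Auslander-regular of dimension $\dim P+1$ (resp.\ $\dim P$), that $\Lambda_R$ is Cohen-Macaulay of dimension $\dim R+\dim P$, and the attendant Auslander-Buchsbaum and grade formulas; set $\delta:=\projdim_\Lambda(k\hat\otimes_R N)$.

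The first computation is the projective dimension of $N$ over $\Lambda_R$. As $N$ is faithfully flat over the Cohen-Macaulay ring $R$, flatness gives $\Ext^i_R(k,N)\cong\Ext^i_R(k,R)\otimes_R N$, so $\operatorname{depth}_R N=\operatorname{depth}R=\dim R$; the depth formula for the flat extension $R\to\Lambda_R$ then yields $\operatorname{depth}_{\Lambda_R}N=\dim R+\operatorname{depth}_{k\llbracket P\rrbracket}(k\hat\otimes_R N)$. Since $k\hat\otimes_R N$ is killed by the non-zero-divisor $\varpi$, change of rings gives $\operatorname{depth}_{k\llbracket P\rrbracket}(k\hat\otimes_R N)=\dim P+1-\delta$, and Auslander-Buchsbaum over $\Lambda_R$ produces $\projdim_{\Lambda_R}N=\delta-1$. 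In particular $N$ has a projective resolution of length $\delta-1$ over $\Lambda_R$, which --- $\Lambda_R$ being pro-free over $\Lambda$ --- is also a $\Lambda$-projective resolution, so $\projdim_\Lambda N\le\delta-1$; equality need not hold, and closing this gap is the whole content of the hypothesis.

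The heart of the argument is the identity $\delta+\max_\sigma\dim_R M(\sigma)=\dim R+\projdim_\Lambda N$; for the theorem one only needs $\delta+\max_\sigma\dim_R M(\sigma)\ge\dim R+\projdim_\Lambda N$. To establish it I would identify, by tensor-Hom adjunction, $M(\Theta)\cong(\Theta^{\mathrm{op}}\hat\otimes_\Lambda N)^{\vee\vee}$ --- the superscript turning the $P$-action into a right action, $\Theta^d$ being $\OO$-finite free, and the double $\OO$-dual disappearing once $N$ is $\Lambda$-projective --- and likewise $M(\sigma)\cong\sigma^{\mathrm{op}}\hat\otimes_\Lambda N$ after reduction modulo $\varpi$. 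One then computes the Krull dimension over $R$ of these completed tensor products by resolving the left-hand factor: $\Theta^{\mathrm{op}}$ is Cohen-Macaulay over the Auslander-regular $\Lambda$ of dimension $1$ with $\projdim_\Lambda\Theta^{\mathrm{op}}=\dim P$, and each $\sigma^{\mathrm{op}}$ is Cohen-Macaulay of dimension $0$ with projective dimension $\dim P+1$; tensoring such a resolution into the flat-over-$\Lambda$ module $N$, and bookkeeping the difference $\projdim_\Lambda N-\projdim_{\Lambda_R}N$, yields the displayed numerology (the inequality $\ge$ can alternatively be read off from a grade estimate in $\Lambda$). Granting this, the hypothesis forces $\projdim_\Lambda N\le 0$, i.e.\ $N$ is projective over $\Lambda$ --- which is (i) --- and simultaneously forces equality in (\ref{eq:1}), which is (o).

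With $N$ now $\Lambda$-projective, I would finish by choosing a length-$\dim P$ projective resolution $Q_\bullet\to\Theta^{\mathrm{op}}$ over $\Lambda$ and tensoring it with the flat $N$, obtaining an exact complex $0\to Q_{\dim P}\hat\otimes_\Lambda N\to\cdots\to Q_0\hat\otimes_\Lambda N\to M(\Theta)\to 0$ whose terms are flat over $R$ with $\operatorname{depth}_R$ equal to $\dim R$; a depth chase along this complex, together with the Auslander-dual statement attached to the Cohen-Macaulayness of $\Theta^{\mathrm{op}}$ over $\Lambda$ (which rules out unexpected homology after applying $\Hom_R(k,-)$), shows that $M(\Theta)$ is Cohen-Macaulay over $R$; that is (ii). Then (iii) is automatic, a Cohen-Macaulay module over a complete local ring being unmixed and equidimensional and $\Supp_R M(\Theta)$ being $\Spec(R/\ann_R M(\Theta))$. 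The main obstacle is the identity of the third paragraph: it is exactly where the regularity of $\Lambda$ and the Cohen-Macaulayness of $R$ must be used together, and where the gap between $\projdim_\Lambda N$ and $\projdim_{\Lambda_R}N=\delta-1$ --- the gap that makes (\ref{eq:1}) equivalent to the projectivity of $N$ --- has to be pinned down.
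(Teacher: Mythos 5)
The paper does not give a proof of Theorem~\ref{Criterion}; it is quoted verbatim from Pa\v{s}k\={u}nas \cite{MR3306557} and used as a black box, so there is no ``paper's own proof'' to compare against. What follows is an assessment of your proposal on its own terms.

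Your first two paragraphs are sound and are the right opening moves: the passage from $K$ to its pro-$p$ Sylow $P$ by averaging, the depth formula along the flat local extension $R\to\Lambda_R$, and the Auslander--Buchsbaum computation giving $\projdim_{\Lambda_R}N=\delta-1$ are all correct (granting the $\OO$-flat reduction you invoke so that $\Lambda_R$ is pro-free over $\Lambda$). The genuine gap is in the third paragraph, which you yourself flag as ``the heart of the argument'' and ``the main obstacle.'' You posit the identity $\delta+\max_\sigma\dim_R M(\sigma)=\dim R+\projdim_\Lambda N$, but the only argument offered for it is to take a $\Lambda$-projective resolution of $\Theta^{\mathrm{op}}$ or $\sigma^{\mathrm{op}}$ and ``tensor it into the flat-over-$\Lambda$ module $N$.'' At that point $N$ is only known to be flat over $R$ and of finite projective dimension over $\Lambda_R$; its flatness (equivalently projectivity) over $\Lambda$ is precisely conclusion~(i), so the step is circular. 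The parenthetical fallbacks --- ``bookkeeping the difference $\projdim_\Lambda N-\projdim_{\Lambda_R}N$'' or ``a grade estimate in $\Lambda$'' --- are exactly where the real content of the theorem lives: one must control the higher completed $\mathrm{Tor}^\Lambda$ between $\sigma^{\mathrm{op}}$ and $N$ without assuming them to vanish, translate that into a bound on $\dim_R M(\sigma)$, and only then deduce a bound on $\projdim_\Lambda N$. None of this is carried out, and it is not clear that the identity as stated is even the correct intermediate target rather than an inequality in one direction. Conclusions (ii) and (iii) in your fourth paragraph rest on the $\Lambda$-projectivity established in the third, so they inherit the gap; additionally, the ``depth chase'' there is delicate because the terms $Q_i\hat\otimes_\Lambda N$ are not finitely generated over $R$, so one cannot quote the usual depth lemma for short exact sequences of finite modules without further justification.
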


We start with the following setup. Let $\rho\colon G_{\Qp}\rightarrow \GL_2(k)$ be a continuous representation of the form
$\rho(g)=\begin{pmatrix} \chi(g)& \phi(g)\\ 0 &\chi(g)\end{pmatrix}$,
as in \eqref{rho}.
After twisting we may assume that $\chi$ is trivial so that for all $g\in G_{\Qp}$
\begin{gather}
\rho(g)=\begin{pmatrix} 1& \phi(g)\\ 0 &1\end{pmatrix}.
\end{gather}

Let $\psi\colon \Qp^{\times}\rightarrow \OO^{\times}$ be a continuous character with $\psi\epsilon\equiv \mathds{1} \mod \varpi$.
Let $R$ be a complete local noetherian $\OO$-algebra and let 
\begin{equation}
\rho_R\colon G_{\Qp}\rightarrow \GL_2(R)
\end{equation}
be a continuous representation with determinant $\psi\epsilon\colon G_{\Qp}\rightarrow \OO^{\times}$ such that $\rho_R\equiv\rho\mod \mm_R$. Let $R^{\ps,\psi}$ denote the universal deformation ring that parametrizes $2$-dimensional pseudo-characters of $G_{\Qp}$ lifting the trace of the trivial representation and having determinant $\psi\epsilon$. Let $T\colon G_{\Qp}\rightarrow \OO$ be the associated universal pseudo-character. Since $\tr \rho_R$ is a pseudo-character lifting $\tr \rho$, the universal property of $R^{\ps,\psi}$ induces a morphism of $\OO$-algebras
\begin{equation}\label{eq:Rps}
R^{\ps,\psi}\rightarrow R.
\end{equation}



Let from now on $G:=\GL_2(\Qp)$, $P$ the subgroup of upper triangular matrices and $K:=\GL_2(\Zp)$. Let $I_1$ be the subgroup of $K$ which consists of the matrices that are upper unipotent modulo $p$. In particular, $I_1$ is a maximal pro-$p$ Sylow subgroup of $K$. We let $\omega$ be the $\text{mod}\ p$ cyclotomic character, via local class field theory considered as $\omega\colon \Qp^{\times}\rightarrow k^{\times}, x\mapsto x\left|x\right|\mod p$, and define
\begin{equation}
\pi:=(\text{Ind}_P^{G}\mathds{1}\otimes \omega^{-1})_{\sm}.
\end{equation}
We let $\Mod_{G,\psi}^{\sm}(\OO)$ be the full subcategory of $\Mod^{\sm}_G(\OO)$ that consists of smooth $G$-representations with central character $\psi$ and denote by $\Mod_{G,\psi}^{\lfin}(\OO)$ its full subcategory of representations that are locally of finite length. We denote by $\Mod^{\pro}_{G,\psi}(\OO)$ resp. $\mathfrak{C}(\OO)$ the full subcategories of $\Mod_G^{\pro}(\OO)$ that are anti-equivalent to $\Mod_{G,\psi}^{\sm}(\OO)$ resp. $\Mod_{G,\psi}^{\lfin}(\OO)$ via Pontryagin duality.
We see that $\pi$ is an object of $\Mod_{G,\psi}^{\lfin}(\OO)$. Let $\tilde{P}$ be a projective envelope of $\pi^{\vee}$ in $\mathfrak{C}(\OO)$. We define $\tilde{E}:=\End_{\mathfrak{C}(\OO)}(\tilde{P})$. 
Pa{\v{s}}k{\=u}nas has shown in \cite[Cor. 9.24]{MR3150248} that the center of $\tilde{E}$ is isomorphic to $R^{\ps,\psi}$ and 
\begin{equation}
\tilde{E}\cong (R^{\ps,\psi}\hat \otimes_{\OO} \OO\llbracket G_{\Qp}\rrbracket)/J,
\end{equation}
where $J$ is the closure of the ideal generated by $g^2-T(g)g+\psi\epsilon(g)$ for all $g\in G_{\Qp}$ \cite[Cor. 9.27]{MR3150248}. 
The representation $\rho_R$ induces a morphism of $\OO$-algebras $\OO\llbracket G_{\Qp}\rrbracket\rightarrow M_2(R)$. Together with the morphism (\ref{eq:Rps}) we obtain a morphism of $R^{\ps,\psi}$-algebras
\begin{equation}
R^{\ps,\psi}\hat \otimes_{\OO} \OO\llbracket G_{\Qp}\rrbracket\rightarrow M_2(R).
\end{equation}
The Cayley-Hamilton theorem tells us that this morphism is trivial on $J$, so that we get a morphism of $R^{\ps,\psi}$-algebras
\begin{equation}\label{eq:Etilde}
\eta\colon \tilde{E}\rightarrow M_2(R).
\end{equation}
We define 
\begin{equation}\label{Msquare}
M^{\square}(\sigma):=\Hom^{\cont}_{\OO\llbracket K\rrbracket}\big((R\oplus R)\hat\otimes_{\tilde{E},\eta}\tilde{P},\sigma^{\vee}\big)^{\vee}.
\end{equation}
Our goal is to prove the following theorem that enables us to check the condition of Pa{\v{s}}k{\=u}nas' theorem \ref{Criterion} for $N=(R\oplus R)\hat \otimes_{\tilde{E},\eta}\tilde{P}$ in the last section. We let $\projdim_{\OO\llbracket I_1\rrbracket,\psi}$ denote the length of a minimal projective resolution in $\Mod^{\pro}_{I_1,\psi}(\OO)$.
\begin{Theorem}\label{theorem1}
Let $\rho$ and $\rho_R$ be as before. We consider $R$ as an $R^{\ps,\psi}$-module via (\ref{eq:Rps}).
Assume that $\dim R= \dim R^{\ps,\psi}+\dim R/\mm_{R^{\ps,\psi}}R$. Then \[\projdim_{\OO\llbracket I_1\rrbracket,\psi}\big(k\hat \otimes_R((R\oplus R)\hat\otimes_{\tilde{E},\eta}\tilde{P})\big)+\underset{\sigma}{\max}\{\dim_R M^{\square}(\sigma)\}\leq \dim R.\]
In particular, the inequality holds if $R$ is flat over $R^{\ps,\psi}$.
\end{Theorem}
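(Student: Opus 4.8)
The plan is the following. The last sentence of the theorem follows formally once the displayed inequality is known, because a flat local homomorphism $R^{\ps,\psi}\to R$ satisfies $\dim R=\dim R^{\ps,\psi}+\dim R/\mm_{R^{\ps,\psi}}R$. So I would concentrate on the inequality, bounding the projective‑dimension term and the multiplicity term separately and combining them through exactly this dimension hypothesis.

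For the projective‑dimension term, note first that $k\hat\otimes_R(R\oplus R)\cong k\oplus k$ carries the $\tilde E$‑module structure obtained from $\eta$ by reduction modulo $\mm_R$, i.e. the one attached via \eqref{eq:Etilde} to the residual representation $\rho$ of \eqref{rho} itself (since $\rho_R\equiv\rho\bmod\mm_R$); hence
\[ k\hat\otimes_R N\;\cong\;(k\oplus k)\hat\otimes_{\tilde E}\tilde P\;=:\;\kappa^{\vee}, \]
where $\kappa$ is a smooth admissible $k$‑representation of $G$ depending only on $\rho$. I would identify $\kappa$ using Pa{\v{s}}k{\={u}}nas' analysis, in the reducible scalar‑semisimplification situation, of the block of $\pi^{\vee}$ and of the functor $\mathrm m\mapsto\mathrm m\hat\otimes_{\tilde E}\tilde P$ \cite{MR3150248}: $\kappa$ is assembled from finitely many copies of the principal series $\pi=(\mathrm{Ind}_P^G\mathds 1\otimes\omega^{-1})_{\sm}$, glued along a socle filtration governed by the non‑split extension defining $\rho$, and is a direct sum of two such copies when $\phi=0$. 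To compute $\projdim_{\OO\llbracket I_1\rrbracket,\psi}\kappa^{\vee}$ I would restrict to $I_1$: the group $I_1$ has exactly two orbits on $\mathbb P^1(\Zp)$, both open, isomorphic to $I_1/J_1$ and $I_1/J_2$ with $J_1,J_2$ the ``lower'' and ``upper'' triangular parts of $I_1$ (closed subgroups of dimension $3$ containing the centre $Z_1$ of $I_1$), so by the Iwasawa decomposition $G=PK$ the restriction $\pi|_{I_1}$ is a direct sum $\mathrm{Ind}_{J_1}^{I_1}\lambda_1\oplus\mathrm{Ind}_{J_2}^{I_1}\lambda_2$ of induced characters; the projective dimension over $\OO\llbracket I_1\rrbracket_\psi$ of the Pontryagin dual of such an induced module is read off by flat base change along the free extensions $\OO\llbracket J_i\rrbracket_\psi\hookrightarrow\OO\llbracket I_1\rrbracket_\psi$, and combining this with the extension structure of $\kappa$ gives the precise value of $\projdim_{\OO\llbracket I_1\rrbracket,\psi}\kappa^{\vee}$. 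In contrast to the situation of \cite{MR3306557}, where $k\hat\otimes_R N$ is free over $\OO\llbracket I_1\rrbracket_\psi$, this value is strictly positive, which is exactly why the dimension hypothesis is needed.

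For the multiplicity term, adjunction together with \eqref{Msquare} gives $M^{\square}(\sigma)^{\vee}\cong\Hom_{\tilde E}\big(R\oplus R,\Hom^{\cont}_{\OO\llbracket K\rrbracket}(\tilde P,\sigma^{\vee})\big)$, which dualizes so as to exhibit $M^{\square}(\sigma)$ as a quotient of $(R\otimes_{R^{\ps,\psi}}M^{\ps}(\sigma))^{\oplus 2}$, where $M^{\ps}(\sigma):=\Hom^{\cont}_{\OO\llbracket K\rrbracket}(\tilde P,\sigma^{\vee})^{\vee}$ is the finitely generated $R^{\ps,\psi}$‑module computed by Pa{\v{s}}k{\={u}}nas and the $R^{\ps,\psi}$‑action comes from the centre of $\tilde E$, i.e. from \eqref{eq:Rps}. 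Hence $\Supp_R M^{\square}(\sigma)$ lies in the preimage of $\Supp_{R^{\ps,\psi}}M^{\ps}(\sigma)$ under $\Spec R\to\Spec R^{\ps,\psi}$, and using the hypothesis $\dim R=\dim R^{\ps,\psi}+\dim R/\mm_{R^{\ps,\psi}}R$ together with the dimension inequality for the fibres of this morphism I get $\dim_R M^{\square}(\sigma)\le\dim_{R^{\ps,\psi}}M^{\ps}(\sigma)+\dim R/\mm_{R^{\ps,\psi}}R$. It then remains to verify the purely pseudo‑character inequality $\projdim_{\OO\llbracket I_1\rrbracket,\psi}\kappa^{\vee}+\max_{\sigma}\dim_{R^{\ps,\psi}}M^{\ps}(\sigma)\le\dim R^{\ps,\psi}$, which I would obtain by feeding the projective‑dimension computation above into Pa{\v{s}}k{\={u}}nas' computations of the $M^{\ps}(\sigma)$ and of $\dim R^{\ps,\psi}$ in this block \cite{MR3150248,MR3306557}; adding the two inequalities and invoking the dimension hypothesis once more gives the theorem.

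The step I expect to be the main obstacle is the determination of $\kappa=\rho\hat\otimes_{\tilde E}\tilde P$ and of $\projdim_{\OO\llbracket I_1\rrbracket,\psi}\kappa^{\vee}$: this requires the exact socle/extension structure of $\kappa$ in the scalar‑semisimplification block — in particular how the non‑split extension defining $\rho$ interacts with the homological algebra of $\tilde P$ over $\OO\llbracket I_1\rrbracket_\psi$ — so as to pin down the precise (positive) value and not just an upper bound. A secondary difficulty is making the fibre‑dimension estimate in the multiplicity step sharp enough for the final count to close, and checking that the comparison of $M^{\square}(\sigma)$ with $R\otimes_{R^{\ps,\psi}}M^{\ps}(\sigma)$ is compatible with all the module structures in play.
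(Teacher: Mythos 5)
Your high‑level strategy coincides with the paper's: identify $k\hat\otimes_R N$ with $(k\oplus k)\hat\otimes_{\tilde E}\tilde P$, bound the projective‑dimension term and the $M^\square(\sigma)$ term separately, push the multiplicity bound down to $R^{\ps,\psi}$, and combine the two pieces using the dimension hypothesis $\dim R=\dim R^{\ps,\psi}+\dim R/\mm_{R^{\ps,\psi}}R$. Your treatment of the multiplicity term via tensor--Hom adjunction is a legitimate variant of the paper's Lemma~\ref{lemma2} (which proves $M^\square(\sigma)\cong(R\oplus R)\hat\otimes_{\tilde E,\eta}M(\sigma)$) followed by the fibre‑dimension inequality \cite[A.11]{MR1251956}; both routes give $\dim_R M^\square(\sigma)\le\dim_{R^{\ps,\psi}}M(\sigma)+\dim R/\mm_{R^{\ps,\psi}}R$, and are essentially equivalent.

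The genuine gap is in the two quantities you defer. First, you need $\dim_{R^{\ps,\psi}}M^{\ps}(\sigma)\le 1$; the paper establishes this in Lemma~\ref{lemma3} by invoking the Hu--Tan cyclicity result \cite[Prop.\ 2.9]{MR3306557} showing $M(\sigma)$ is a cyclic $\tilde E$‑module with $\tilde E/\ann\cong k\llbracket S\rrbracket$, and then arguing that $\mm_{R^{\ps,\psi}}$ cannot annihilate $M(\sigma)$; you wave at ``Pa{\v{s}}k{\=u}nas' computations'' without locating this. Second and more seriously, you need the \emph{exact} value $\projdim_{\OO\llbracket I_1\rrbracket,\psi}(k\hat\otimes_R N)=3$, and your final arithmetic $\projdim+1\le\dim R^{\ps,\psi}=4$ leaves no slack: a bound like ``strictly positive'' is useless, and even $\le 3$ would need proof. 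The paper gets the value cheaply: since $\tilde P$ is $\tilde E$‑flat, $(k\oplus k)\hat\otimes_{\tilde E}\tilde P$ is a two‑step extension of $\pi^\vee$ by $\pi^\vee$, hence has the same projective dimension as $\pi^\vee$ regardless of whether $\phi=0$ or not (so your discussion of the ``socle filtration governed by $\rho$'' is beside the point), and then $\projdim_{\OO\llbracket I_1\rrbracket,\psi}\pi^\vee=3$ is read off as in \cite[Prop.\ 6.21]{MR3306557} from the cohomology computations of \cite[Cor.\ 10.4]{MR3150248}. Your proposed Mackey/Iwasawa route — decomposing $\pi|_{I_1}$ along the two open $I_1$‑orbits on $\mathbb P^1$ and base‑changing from $\OO\llbracket J_i\rrbracket_\psi$ — is a plausible alternative but needs care: the stabilizers $J_i$ are closed but \emph{not} open in $I_1$ (they have codimension one), so one is dualizing a smooth induction from a non‑open subgroup, one only immediately gets an upper bound $\projdim\le 3$ from flat base change, and one must separately produce a nonvanishing $\Ext^3$ to conclude. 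Finally, the aside that in \cite{MR3306557} the module $k\hat\otimes_R N$ is free over $\OO\llbracket I_1\rrbracket_\psi$ is not accurate — there too the projective dimension term is a nonzero input — so the ``contrast'' you draw is illusory, though it doesn't affect the argument.
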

We start with computing the first summand.
\begin{Lemma}\label{lemma1}
\[\projdim_{\OO\llbracket I_1\rrbracket,\psi}\big(k\hat \otimes_R((R\oplus R)\hat \otimes_{\tilde{E},\eta}\tilde{P})\big)=3.\]
\end{Lemma}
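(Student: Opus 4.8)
The plan is to reduce $k\hat\otimes_R N$ to a representation whose restriction to $I_1$ can be analysed by hand. Write $N:=(R\oplus R)\hat\otimes_{\tilde E,\eta}\tilde P$. Since the completed tensor products commute with reduction modulo $\mm_R$, one has $k\hat\otimes_R N\cong(k\oplus k)\hat\otimes_{\tilde E,\bar\eta}\tilde P$, where $\bar\eta\colon\tilde E\to M_2(k)$ is the reduction of $\eta$; as $\rho_R\equiv\rho\pmod{\mm_R}$, the map $\bar\eta$ sends the central subalgebra $R^{\ps,\psi}$ to $k$ through the structure map (so that $\mm_{R^{\ps,\psi}}$ acts by zero) and sends $g\in G_{\Qp}$ to $\rho(g)$. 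Since $\rho$ is unipotent, $k\oplus k$, viewed as a right $\tilde E$-module through $\bar\eta$, is a self-extension — split if and only if $\rho$ is split — of the trivial one-dimensional right $\tilde E$-module $\mathds{1}:=\tilde E/\ker(\tilde E\to k)$ by itself. Hence $k\hat\otimes_R N$ is an extension of the ``building block'' $\mathds{1}\hat\otimes_{\tilde E}\tilde P\cong\big(\tilde P\hat\otimes_{R^{\ps,\psi}}k\big)_{G_{\Qp}}$ — the completed $G_{\Qp}$-coinvariants of the central reduction of $\tilde P$ for the $G_{\Qp}$-action coming from $\eta$ — by a copy of itself; along the way one uses flatness of $\tilde P$ over $R^{\ps,\psi}$ and Pa{\v{s}}k{\={u}}nas's structure results to see that the relevant higher $\operatorname{Tor}$ groups over $\tilde E$ vanish. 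It therefore suffices to prove $\projdim_{\OO\llbracket I_1\rrbracket,\psi}\big(\mathds{1}\hat\otimes_{\tilde E}\tilde P\big)=3$ and then propagate this through the two-step filtration.

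The crux is to identify the building block concretely as an object of $\Mod^{\pro}_{I_1,\psi}(\OO)$, using Pa{\v{s}}k{\={u}}nas's explicit description of $\tilde P$ from \cite{MR3150248}, \cite{MR3306557}. The point is that, although $\tilde P$ is a large object of $\mathfrak{C}(\OO)$, its central reduction together with the passage to $G_{\Qp}$-coinvariants collapses it to something manageable: from the description of the block generated by $\pi=(\operatorname{Ind}_P^G\mathds{1}\otimes\omega^{-1})_{\sm}$ — a principal series, so that the irreducibles of the block restrict to $K$ as representations induced from characters of the Borel $B(\Zp)=P\cap K$ — one finds that $\mathds{1}\hat\otimes_{\tilde E}\tilde P$, restricted to $I_1$, carries a finite filtration whose graded pieces are finite direct sums of modules $\OO\llbracket I_1\rrbracket\hat\otimes_{\OO\llbracket H\rrbracket}k$, where $H$ runs over the Iwahori--Borel subgroups of $I_1$ (intersections of $I_1$ with Borels of $K$; each is a $p$-torsion-free group of dimension $3$ containing $Z_1:=Z\cap I_1$) and $k$ is the trivial module. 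The unipotent shape of $\rho$ re-enters here, in determining the length of this filtration.

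To conclude, note that $\OO\llbracket H\rrbracket/(\gamma-\psi(\gamma))$, for $\gamma$ a topological generator of $Z_1$, is the local ring governing $\Mod^{\pro}_{H,\psi}(\OO)$; it has finite global dimension $\dim(H/Z_1)+1=3$, so its residue field $k$ has projective dimension exactly $3$ there. Since $\OO\llbracket I_1\rrbracket$ is free over $\OO\llbracket H\rrbracket$, Shapiro's lemma gives $\projdim_{\OO\llbracket I_1\rrbracket,\psi}\big(\OO\llbracket I_1\rrbracket\hat\otimes_{\OO\llbracket H\rrbracket}k\big)=3$; a finite filtration by such modules keeps the value $3$ — $\operatorname{Ext}^{\ge 4}$ vanishes and the top $\operatorname{Ext}^{3}$ (against the trivial representation) survives — and the self-extension of the first paragraph then yields $\projdim_{\OO\llbracket I_1\rrbracket,\psi}\big(k\hat\otimes_R N\big)=3$, as claimed. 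I expect the main obstacle to be the identification in the second paragraph: extracting enough of Pa{\v{s}}k{\={u}}nas's structure theory of $\tilde P$ to pin the building block down, carrying the central character $\psi$ through every step, and controlling the finite filtrations so that the degree-$3$ $\operatorname{Ext}$ class does not collapse to a lower degree. (A minor point: when $p=3$ the group $I_1$ itself has $p$-torsion, so $\Mod^{\pro}_{I_1,\psi}(\OO)$ has infinite global dimension; that the projective dimension in question is finite is a genuine feature of the module, made visible by the Shapiro reduction to the $p$-torsion-free subgroups $H$.)
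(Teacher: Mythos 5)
Your first step coincides with the paper's: write $k\hat\otimes_R N\cong(k\oplus k)\hat\otimes_{\tilde E}\tilde P$ and filter the right $\tilde E$-module $k\oplus k$ by two copies of the trivial module. One slip here: to push that filtration through $\hat\otimes_{\tilde E}\tilde P$ you need $\tilde P$ to be flat over $\tilde E$, not merely over the central subring $R^{\ps,\psi}$. This is supplied by the fact that $\tilde P$ is projective in $\mathfrak{C}(\OO)$ with endomorphism ring $\tilde E$, which is what the paper invokes.

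The ``identification'' you flag as the main obstacle is not an obstacle in the paper but a citation: Pa{\v{s}}k{\=u}nas proves $k\hat\otimes_{\tilde E}\tilde P\cong\pi^\vee$ (\cite[Lemma 9.1]{MR3150248}, also \cite[Prop.~1.12]{MR3150248}), so the building block is simply the Pontryagin dual of the principal series $\pi=(\text{Ind}_P^G\mathds{1}\otimes\omega^{-1})_{\sm}$. Your description of it as completed $G_{\Qp}$-coinvariants of the central reduction of $\tilde P$ is correct but circuitous, and the intermediate claim that ``the irreducibles of the block restrict to $K$ as representations induced from characters of the Borel'' is false in general (the block contains characters and Steinberg, which do not) --- fortunately it is also unnecessary. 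With the identification $\cong\pi^\vee$ in hand, the filtration you want is immediate and in fact a direct sum: Iwasawa gives $\pi|_K\cong\text{Ind}_{P\cap K}^K(\mathds{1}\otimes\omega^{-1})$, and Mackey over the two double cosets in $(P\cap K)\backslash K/I_1$ gives $\pi^\vee|_{I_1}$ as the sum of two compact inductions $\OO\llbracket I_1\rrbracket\hat\otimes_{\OO\llbracket H\rrbracket}k$ from the two Iwahori--Borel subgroups $H$.

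From there your route genuinely diverges from the paper's. The paper cites \cite[Cor.~10.4]{MR3150248} for the relevant $I_1/Z_1$-cohomology groups and \cite[Prop.~6.21]{MR3306557} for the conversion to projective dimension $3$. You instead propose a self-contained computation: Shapiro reduces to $\projdim_{\OO\llbracket H\rrbracket,\psi}k$, and $\OO\llbracket H/Z_1\rrbracket$ has global dimension $3$ because $H/Z_1$ is a $2$-dimensional $p$-torsion-free compact $p$-adic group; the top $\Ext^3$ survives direct sums and the final two-step self-extension by the long exact sequence (since $\Ext^4(\pi^\vee,-)=0$, $\Ext^3(B,-)\twoheadrightarrow\Ext^3(\pi^\vee,-)$). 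This is a sound argument which in effect re-derives the cited cohomology computation; it is more elementary and more transparent about the $p=3$ torsion issue in $I_1$, at the cost of being longer than the paper's two citations.
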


\begin{proof}
We have
\[k\hat \otimes_R((R\oplus R)\hat \otimes_{\tilde{E},\eta}\tilde{P})\cong(k\oplus k)\hat \otimes_{\tilde{E}}\tilde{P}.\]
Because of $k\hat\otimes_{\tilde{E}} \tilde{P}\cong \pi^{\vee}$, see \cite[Lemma 9.1]{MR3150248}, and since $\tilde{P}$ is flat over the local ring $\tilde{E}$, $(k\oplus k)\hat \otimes_{\tilde{E}}\tilde{P}$ is an extension of $\pi^{\vee}$ by itself. Thus
\[\projdim_{\OO\llbracket I_1\rrbracket,\psi}\big(k\hat \otimes_R((R\oplus R)\hat \otimes_{\tilde{E},\eta}\tilde{P})\big)=\projdim_{\OO\llbracket I_1\rrbracket,\psi}\pi^{\vee}.\]
The rest of the proof works analogous to the proof of \cite[Prop. 6.21]{MR3306557}, the respective cohomology groups are calculated in \cite[Cor. 10.4]{MR3150248}.
\end{proof}


\begin{Lemma}\label{lemma2}
Let $R$, $N$, $\sigma$ be as before, $\mathrm{m}$ a compact $R$-module. Then \[\Hom_{\OO\llbracket K\rrbracket}^{\cont}(\mathrm{m}\hat\otimes_RN,\sigma^{\vee})^{\vee}\cong\mathrm{m}\hat\otimes_R\Hom_{\OO\llbracket K\rrbracket}^{\cont}(N,\sigma^{\vee})^{\vee}.\]
\end{Lemma}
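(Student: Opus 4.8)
The plan is to rewrite both sides as completed tensor products over $\OO\llbracket K\rrbracket$ and then to invoke associativity of $\hat\otimes$. The input I would use is the tensor--hom duality underlying the formalism of \cite{MR3306557}: for any compact $\OO\llbracket K\rrbracket$-module $C$ there is a natural isomorphism
\[
\Hom^{\cont}_{\OO\llbracket K\rrbracket}(C,\sigma^{\vee})^{\vee}\;\cong\;C\hat\otimes_{\OO\llbracket K\rrbracket}\sigma .
\]
Since $\sigma$ is a smooth irreducible $k$-representation of the compact group $K$ it is finite, so every continuous $\OO\llbracket K\rrbracket$-homomorphism $C\to\sigma^{\vee}$ annihilates an open submodule and one may write $\Hom^{\cont}_{\OO\llbracket K\rrbracket}(C,\sigma^{\vee})=\varinjlim_{C'}\Hom_{\OO\llbracket K\rrbracket}(C/C',\sigma^{\vee})$, the (directed) limit running over the open submodules $C'\subseteq C$. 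At each finite stage the ordinary tensor--hom adjunction over $\OO$ together with $\sigma^{\vee\vee}\cong\sigma$ identifies $\Hom_{\OO\llbracket K\rrbracket}(C/C',\sigma^{\vee})$ with the Pontryagin dual of $(C/C')\hat\otimes_{\OO\llbracket K\rrbracket}\sigma$; dualizing and passing to the inverse limit --- legitimate since $\sigma$ is finite, so $C\hat\otimes_{\OO\llbracket K\rrbracket}\sigma=\varprojlim_{C'}(C/C')\hat\otimes_{\OO\llbracket K\rrbracket}\sigma$ --- yields the displayed isomorphism. The switches between left and right $\OO\llbracket K\rrbracket$-modules are carried out as in loc.\ cit.

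Granting this, I would apply the duality twice. Applied to $C=N$ it rewrites the second factor on the right-hand side of the lemma as $N\hat\otimes_{\OO\llbracket K\rrbracket}\sigma$; applied to $C=\mathrm m\hat\otimes_R N$ --- a compact $\OO\llbracket K\rrbracket$-module via the $K$-action inherited from $N$ --- it rewrites the left-hand side as $(\mathrm m\hat\otimes_R N)\hat\otimes_{\OO\llbracket K\rrbracket}\sigma$. The lemma is thereby reduced to the associativity isomorphism
\[
(\mathrm m\hat\otimes_R N)\hat\otimes_{\OO\llbracket K\rrbracket}\sigma\;\cong\;\mathrm m\hat\otimes_R\big(N\hat\otimes_{\OO\llbracket K\rrbracket}\sigma\big)
\]
of completed tensor products of pseudocompact modules, which is standard: one verifies it modulo open submodules of each factor, where it is ordinary associativity, and then passes to the inverse limit. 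As $R$ is central in $R\llbracket K\rrbracket$ all the functors in sight are $R$-linear, so this is an isomorphism of (compact) $R$-modules, as required.

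The step demanding the most care is the bookkeeping of module structures rather than any real difficulty: one must make sure that the $K$-action on $\mathrm m\hat\otimes_R N$ used to form the outer $\Hom^{\cont}_{\OO\llbracket K\rrbracket}$ is the one coming from $N$, that the residual $R$-module structures on the two sides coincide, and that the left/right conventions over the noncommutative ring $\OO\llbracket K\rrbracket$ are applied consistently throughout. Once the duality and the associativity isomorphism are set up with matching conventions the lemma is formal; note also that, beyond compactness, no finiteness hypothesis on $N$ or $\mathrm m$ is actually used.
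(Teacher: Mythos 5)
Your proof is correct, but it takes a genuinely different route from the paper. The paper's argument is a limit argument: it writes $\mathrm m=\varprojlim\mathrm m_i$ as an inverse limit of finitely generated $R$-modules, checks that $\Hom_K(\sigma,-)$ commutes with the resulting filtered colimit of Pontryagin duals (using that $\sigma$ is a finitely generated $K$-representation), invokes \cite[Prop.\ 2.4]{MR3306557} to settle each finitely generated stage, and then passes to the inverse limit. You instead unwind the functor $M(-)$ completely: you identify $\Hom^{\cont}_{\OO\llbracket K\rrbracket}(C,\sigma^{\vee})^{\vee}$ with $C\hat\otimes_{\OO\llbracket K\rrbracket}\sigma$ for \emph{any} compact $\OO\llbracket K\rrbracket$-module $C$ (using finiteness of $\sigma$ rather than finite generation of $\mathrm m$), and then the lemma collapses to associativity of completed tensor products. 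Your version is more conceptual and effectively reproves the finitely generated case used as a black box in the paper; the trade-off is that you must set up the tensor--hom duality for compact modules and track the left/right $\OO\llbracket K\rrbracket$-module conventions, a bookkeeping step you rightly flag. One small caveat: associativity of $\hat\otimes$ for pseudocompact modules is indeed standard (it reduces to ordinary associativity on finite quotients and commutation of $-\otimes_{\OO\llbracket K\rrbracket}\sigma$ with inverse limits, which holds since $\sigma$ is finite and killed by an open ideal), but since it carries the whole weight of the argument it deserves a citation or a couple of explicit lines rather than the word ``standard.'' With that addressed, the argument is sound and, if anything, slightly cleaner than the one in the paper.
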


\begin{proof}
Since $\mathrm{m}$ is compact, we can write it as an inverse limit $\mathrm{m}=\varprojlim \mathrm{m}_i$ of finitely generated $R$-modules.
Also the completed tensor product is defined as an inverse limit, so that we obtain
\begin{align*}
\Hom_{\OO\llbracket K\rrbracket}^{\cont}(\mathrm{m}\hat\otimes_RN,\sigma^{\vee})&\cong \Hom_{\OO\llbracket K\rrbracket}^{\cont}(\varprojlim(\mathrm{m}_i\hat\otimes_RN),\sigma^{\vee})\\
&\cong\Hom_K(\sigma,\varinjlim(\mathrm{m}_i\hat\otimes_RN)^{\vee}).
\end{align*}
The universal property of the inductive limit yields a morphism
\[\varinjlim\Hom_K(\sigma,(\mathrm{m}_i\hat\otimes_RN)^{\vee})\rightarrow \Hom_K(\sigma,\varinjlim(\mathrm{m}_i\hat\otimes_RN)^{\vee}),\]
which is easily seen to be injective. For the surjectivity we have to show that every $K$-morphism from $\sigma$ to $\varinjlim(\mathrm{m}_i\hat\otimes_RN)^{\vee}$ factors through some finite level. But this follows from the fact that $\sigma$ is a finitely generated $K$-representation. This implies
\begin{align*}
\Hom_K(\sigma,\varinjlim(\mathrm{m}_i\hat\otimes_RN)^{\vee})&\cong\varinjlim\Hom_K(\sigma,(\mathrm{m}_i\hat\otimes_RN)^{\vee})\\
&\cong\varinjlim\Hom_{\OO\llbracket K\rrbracket}^{\cont}(\mathrm{m}_i\hat\otimes_RN,\sigma^{\vee}).
\end{align*}
Since the statement holds for finitely generated $\mathrm{m}$ by \cite[Prop. 2.4]{MR3306557}, taking the Pontryagin duals yields
\begin{align*}
\Hom_{\OO\llbracket K\rrbracket}^{\cont}(\mathrm{m}\hat\otimes_RN,\sigma^{\vee})^{\vee}&\cong \varprojlim\Hom_{\OO\llbracket K\rrbracket}^{\cont}(\mathrm{m}_i\hat\otimes_RN,\sigma^{\vee})^{\vee}\\
&\cong\varprojlim \mathrm{m}_i\hat \otimes_R \Hom_{\OO\llbracket K\rrbracket}^{\cont}(N,\sigma^{\vee})^{\vee}\\
&\cong \mathrm{m}\hat \otimes_R \Hom_{\OO\llbracket K\rrbracket}^{\cont}(N,\sigma^{\vee})^{\vee}.
\end{align*}
\end{proof}
For the rest of the section we set $N=\tilde{P}$ so that $M(\sigma)=\Hom^{\cont}_{\OO\llbracket K\rrbracket}(\tilde{P},\sigma^{\vee})^{\vee}$.
\begin{Lemma}\label{lemma3}
Let $\sigma$ be a smooth irreducible $K$-representation over $k$. Then ${M(\sigma)\neq0}$ if and only if $\Hom_K(\sigma,\pi)\neq0$. Moreover,
$\dim_{R^{\ps,\psi}}M(\sigma)\leq1.$
\end{Lemma}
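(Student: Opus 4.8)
The plan is to reduce the statement to the structure of $\pi$ and its summands under $K$. First I would recall that $\tilde P$ is a projective envelope of $\pi^\vee$ in $\mathfrak C(\OO)$, and that by \cite[Lemma 9.1]{MR3150248} we have $k\hat\otimes_{\tilde E}\tilde P\cong\pi^\vee$. Since $M(\sigma)=\Hom^{\cont}_{\OO\llbracket K\rrbracket}(\tilde P,\sigma^\vee)^\vee$ is a finitely generated $R^{\ps,\psi}$-module, Nakayama's lemma shows $M(\sigma)\neq 0$ if and only if $k\hat\otimes_{R^{\ps,\psi}}M(\sigma)\neq 0$. Applying Lemma \ref{lemma2} with $\mathrm m=k$ and $N=\tilde P$ (noting the center of $\tilde E$ is $R^{\ps,\psi}$, so $k$ is a compact $R^{\ps,\psi}$-module and $k\hat\otimes_{R^{\ps,\psi}}\tilde P\cong k\hat\otimes_{\tilde E}\tilde P$ when $\tilde E$ acts through its center), this gives $k\hat\otimes_{R^{\ps,\psi}}M(\sigma)\cong\Hom^{\cont}_{\OO\llbracket K\rrbracket}(\pi^\vee,\sigma^\vee)^\vee$. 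By Pontryagin duality the latter is dual to $\Hom_K(\sigma,\pi)$, so $M(\sigma)\neq 0$ iff $\Hom_K(\sigma,\pi)\neq 0$, which is the first claim.

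For the dimension bound, I would argue that $R^{\ps,\psi}$ acts on $M(\sigma)$ through a small quotient. The key point is that $\pi=(\mathrm{Ind}_P^G\mathds 1\otimes\omega^{-1})_{\sm}$ is an irreducible (or, if $\omega^2=\mathds 1$, at worst length-two) principal series representation whose only pseudo-character is $T=\tr\rho$, the trace of the trivial representation — i.e. the closed point of $\Spec R^{\ps,\psi}$. More precisely, I expect $\pi^\vee$ is killed by $\mm_{R^{\ps,\psi}}$ as an object of $\mathfrak C(\OO)$ up to finite length, so that $M(\sigma)$ is supported on a subscheme of $\Spec R^{\ps,\psi}$ of dimension at most $1$. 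Concretely, the support of $M(\sigma)$ is contained in the union of the closed point and the components of $\Spec R^{\ps,\psi}$ coming from the two Steinberg-type points or the special character; by the explicit description of blocks in \cite{MR3150248} (the $\pi$ here lies in a block whose corresponding piece of $R^{\ps,\psi}$ has the relevant component of dimension $\le 1$), one gets $\dim_{R^{\ps,\psi}}M(\sigma)\le 1$.

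The main obstacle is the dimension bound rather than the nonvanishing criterion: it requires pinning down exactly which quotient of $R^{\ps,\psi}$ supports $M(\sigma)$, and this is where the specific structure of the block of $\pi$ in Pa\v{s}k\=unas's classification enters. I would handle it by identifying, for each irreducible $\sigma$ with $\Hom_K(\sigma,\pi)\neq0$, a deformation-theoretic description of $M(\sigma)$: it should be a quotient of a universal deformation ring of a one-dimensional family (the principal series $\mathrm{Ind}_P^G\mathds 1\otimes\delta$ as $\delta$ varies, subject to the fixed central character), whose reduced dimension is $1$. The inequality $\dim_{R^{\ps,\psi}}M(\sigma)\le 1$ then follows, and the case $\sigma$ not appearing in $\pi$ is covered by the first part giving $M(\sigma)=0$. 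I would also remark that the bound is uniform in $\sigma$, which is what is needed when feeding this into the maximum over $\sigma$ in Theorem \ref{theorem1}.
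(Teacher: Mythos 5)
Your proof of the first claim follows the paper's outline (reduce to $\Hom_K(\sigma,\pi)$ via Nakayama and Lemma \ref{lemma2}) but inserts a false identification. You write $k\hat\otimes_{R^{\ps,\psi}}\tilde P\cong k\hat\otimes_{\tilde E}\tilde P$ ``when $\tilde E$ acts through its center,'' but $\tilde E$ does \emph{not} act on $\tilde P$ through its center: $\tilde E$ is non-commutative and free of rank $4$ over $R^{\ps,\psi}$, so $\mm_{R^{\ps,\psi}}\tilde E\subsetneq\mm_{\tilde E}$, and hence $\tilde P/\mm_{R^{\ps,\psi}}\tilde P$ and $\tilde P/\mm_{\tilde E}\tilde P\cong\pi^\vee$ genuinely differ (the former has a length-$4$ filtration with graded pieces $\pi^\vee$). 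The paper avoids this by working over $\tilde E$ throughout: $M(\sigma)$ is a cyclic $\tilde E$-module, $k\hat\otimes_{\tilde E}\tilde P\cong\pi^\vee$, and Nakayama is applied over the local ring $\tilde E$. Your conclusion can still be salvaged by arguing through the filtration, but that is not what you wrote, and as stated the step is incorrect.

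The dimension bound is where the real gap lies. Your argument is a sketch of intentions rather than a proof, and some of the intuition is wrong: $R^{\ps,\psi}\cong\OO\llbracket t_1,t_2,t_3\rrbracket$ is a regular local ring and therefore has no ``components,'' so the appeal to Steinberg-type points and special characters contributing components of $\Spec R^{\ps,\psi}$ does not parse. What is actually needed is the concrete structural input the paper uses: $M(\sigma)$ is a \emph{cyclic} $\tilde E$-module, and by \cite[Prop.~2.4]{MR3306557} the quotient $\tilde E/\ann_{\tilde E}M(\sigma)$ is isomorphic to $k\llbracket S\rrbracket$, a one-dimensional ring. Since $R^{\ps,\psi}$ is the center of $\tilde E$, the quotient $R^{\ps,\psi}/\ann_{R^{\ps,\psi}}M(\sigma)$ embeds into $k\llbracket S\rrbracket$, and a short Nakayama-type argument (if $\mm_{R^{\ps,\psi}}$ annihilated $M(\sigma)$, then $k\llbracket S\rrbracket$ would act faithfully on a finite-dimensional $k$-space, absurd) pins the dimension at exactly $1$. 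Your ``deformation ring of a one-dimensional family'' captures the right heuristic, but you never identify the relevant one-dimensional ring, never justify that the support is contained in it, and never rule out the degenerate zero-dimensional case. Without the cyclicity of $M(\sigma)$ over $\tilde E$ and the isomorphism $\tilde E/\ann_{\tilde E}M(\sigma)\cong k\llbracket S\rrbracket$, the bound is not established.
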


\begin{proof}
By \cite[Cor. 9.25]{MR3150248}, we know that $\tilde{E}$ is a free $R^{\ps,\psi}$-module of rank $4$. Hu-Tan have shown in \cite[Prop. 2.9]{MR3306557} that $M(\sigma)$ is a cyclic $\tilde{E}$-module, thus $M(\sigma)$ is a finitely generated $R^{\ps,\psi}$-module. Furthermore, $M(\sigma)$ is a compact $\tilde{E}$-module, see for example \cite[\S IV.4, Cor.1]{MR0232821}. The same way as in Lemma \ref{lemma2} one can show that 
\begin{equation}\label{Msigma}
\Hom_{\OO\llbracket K\rrbracket}^{\cont}(k\hat\otimes_{\tilde{E}}\tilde{P},\sigma^{\vee})^{\vee}\cong k\hat\otimes_{\tilde{E}}M(\sigma).
\end{equation}
By \cite[Prop. 1.12]{MR3150248}, we have $k\hat\otimes_{\tilde{E}}\tilde{P}\cong \pi^{\vee}$ so that \eqref{Msigma} implies
\begin{equation}
k\hat\otimes_{\tilde{E}}M(\sigma)\cong \Hom_{\OO\llbracket K\rrbracket}^{\cont}(\pi^{\vee},\sigma^{\vee})^{\vee}\cong \Hom_K(\sigma,\pi).
\end{equation}
Hence Nakayama lemma gives us that $M(\sigma)\neq0$ if and only if $\Hom_K(\sigma,\pi)\neq0$.
If this holds, it follows again from \cite[Prop. 2.4]{MR3306557} that, if we let $J$ denote the annihilator of $M(\sigma)$ as $\tilde{E}$-module, there is an isomorphism of rings $\tilde{E}/J\cong k\llbracket S\rrbracket$. Again by \cite[Cor. 9.24]{MR3150248}, $R^{\ps,\psi}$ is isomorphic to the center of $\tilde{E}$. If we let $J^{\ps}$ denote the annihilator of $M(\sigma)$ as $R^{\ps,\psi}$-module, we get an inclusion 
\begin{equation}
R^{\ps,\psi}/J^{\ps}\hookrightarrow \tilde{E}/J\cong k\llbracket S\rrbracket.
\end{equation}
Hence it suffices to show that $\dim_{R^{\ps,\psi}}k\llbracket S\rrbracket\leq 1$, which is equivalent to the existence of an element $x\in \mm_{R^{\ps,\psi}}$ that does not lie in $J^{\ps}$. We assume that $\mm_{R^{\ps,\psi}}\subset J^{\ps}$. Then we have a finite dimensional $k$-vector space $M(\sigma)/\mm_{R^{\ps,\psi}}M(\sigma)\cong M(\sigma)$, on which $\tilde{E}/J\cong k\llbracket S\rrbracket$ acts faithfully, which is impossible.
\end{proof}
The proof of the theorem is now just a combination of the above Lemmas.

\begin{proof}[Proof of Theorem \ref{theorem1}]
Let $\sigma$ be such that $M^{\square}(\sigma)\neq 0$. Then we see from Lemma \ref{lemma2} that \[M^{\square}(\sigma)\cong(R\oplus R)\hat \otimes_{\tilde{E},\eta}M(\sigma).\] 
Since $\tilde{E}$ is a finite $R^{\ps,\psi}$-module by \cite[Cor. 9.17]{MR3150248}, we have
\begin{align*}
\dim_R M^{\square}(\sigma)&=\dim_R(R\oplus R)\hat \otimes_{\tilde{E},\eta}M(\sigma)\\
&\leq \dim_R(R\oplus R)\otimes_{R^{\ps,\psi}}M(\sigma).
\end{align*}
By \cite[A.11]{MR1251956} we know that for a morphism of local rings $A\rightarrow B$ and non-zero finitely generated modules $M,N$ over $A$ resp. $B$, we have
\begin{equation}\label{Bruns}
\dim_BM\otimes_AN\le \dim_AM+\dim_BN/\mm_AN.
\end{equation} 
Since we already know from Lemma \ref{lemma3} that $\dim_{R^{\ps,\psi}}M(\sigma)=1$, we obtain from \eqref{Bruns} that
\[\dim_{R}\left((R\oplus R)\otimes_{R^{\ps,\psi}}M(\sigma)\right)\leq 1+\dim R/\mm_{R^{\ps,\psi}}R.\]
This expression depends only on the structure of $R$ as an $R^{\ps,\psi}$-module and the assumption of the theorem implies
\[\dim_{R}\left((R\oplus R)\otimes_{R^{\ps,\psi}}M(\sigma)\right)\leq 1+\dim R- \dim R^{\ps,\psi}.\]
From the explicit description of $R^{\ps,\psi}$ in \cite[Cor. 9.13]{MR3150248} we know in particular that $R^{\ps,\psi}\cong \OO\llbracket t_1,t_2,t_3\rrbracket$ and thus $\dim R^{\ps,\psi}=4$. The statement is now an immediate consequence of Lemma \ref{lemma1}.
\end{proof}

\section{Flatness}
Let again $\rho\cong \begin{pmatrix} \mathds{1} & \phi \\ 0 & \mathds{1} \end{pmatrix}$. Our goal in this section is to show that the universal framed deformation of $\rho$ with fixed determinant satisfies the conditions of Theorem \ref{theorem1}.
Let $G_{\Qp}(p)$ be the maximal pro-$p$ quotient of $G_{\Qp}$. Since $p>2$, it is a free pro-$p$ group on $2$ generators $\gamma,\delta$ \cite[Thm. 7.5.11]{MR2392026}.
Since the image of $\rho$ is a $p$-group, it factors through $G_{\Qp}(p)$. We have shown in \cite{MR3272032} that the universal framed deformation ring $R^{\square}_{\rho}$ of $\rho$ is isomorphic to $\OO\llbracket x_{11},\hat x_{12},x_{21},t_{\gamma},y_{11},\hat y_{12},y_{21},t_{\delta}\rrbracket$ and the universal framed deformation is given by
\begin{gather}
\rho^{\square}\colon G_{\Qp}(p)\rightarrow \GL_2(R_{\rho}^{\square}),\label{rhosquare}\\
\gamma\mapsto \begin{pmatrix} 1+t_{\gamma}+x_{11}&x_{12}\\x_{21}&1+t_{\gamma}-x_{11}\end{pmatrix},\\
\delta\mapsto \begin{pmatrix} 1+t_{\delta}+y_{11}&y_{12}\\y_{21}&1+t_{\delta}-y_{11}\end{pmatrix},\label{delta}
\end{gather}
where $x_{12}:=\hat x_{12}+[\phi(\gamma)],\ y_{12}:=\hat y_{12}+[\phi(\delta)]$ and $[\phi(\gamma)],[\phi(\delta)]$ denote the Teich\-m\"uller lifts of $\phi(\gamma)$ and $\phi(\delta)$ to $\OO.$ Let $\psi\colon G_{\Qp}\rightarrow \OO^{\times}$ be a continuous character with $\psi\epsilon\equiv 1\ \mbox{mod}\ \varpi$. To find the quotient $R_{\rho}^{\square,\psi}$ of $R_{\rho}^{\square}$ that parametrizes lifts of $\rho$ with determinant $\psi\epsilon$, we have to impose the conditions $\det(\rho^{\square}(\gamma))=\psi\epsilon(\gamma)$ and $\det(\rho^{\square}(\delta))=\psi\epsilon(\delta)$. Therefore, analogous to \cite{MR3272032}, we define the ideal \[I:=\big((1+t_{\gamma})^2-x_{11}^2-x_{12}x_{21}-\psi\epsilon(\gamma),(1+t_{\delta})^2-y_{11}^2-y_{12}y_{21}-\psi\epsilon(\delta)\big)\subset R_{\rho}^{\square,\psi}\] and obtain
\begin{equation}\label{Rdet}
R_{\rho}^{\square,\psi}:=\OO\llbracket x_{11},\hat x_{12},x_{21},t_{\gamma},y_{11},\hat y_{12},y_{21},t_{\delta}\rrbracket /I.
\end{equation}
Let again $R^{\ps,\psi}$ denote the universal deformation ring that parametrizes $2$-dimen\-sional pseudo-characters of $G_{\Qp}$ with determinant $\psi\epsilon$ that lift the trace of the trivial $2$-dimensional representation. Pa{\v{s}}k{\=u}nas has shown in \cite[9.12,9.13]{MR3150248} that $R^{\ps,\psi}$ is isomorphic to $\OO\llbracket t_1,t_2,t_3\rrbracket$ and the universal pseudo-character is uniquely determined by 
\begin{align*}
T\colon G_{\Qp}(p) &\rightarrow \OO\llbracket t_1,t_2,t_3\rrbracket\\
\gamma&\mapsto 2(1+t_1)\\
\delta&\mapsto 2(1+t_2)\\
\gamma\delta&\mapsto 2(1+t_3)\\
\delta\gamma&\mapsto 2(1+t_3).
\end{align*}
Since the trace $T^{\square}$ of $\rho^{\square}$ is a pseudo-deformation of ${2\cdot\mathds{1}}$ to $R^{\square}_{\rho}$, we get an induced morphism
\begin{align}
\phi\colon \OO\llbracket t_1,t_2,t_3\rrbracket\rightarrow &R_{\rho}^{\square,\psi} \label{trace}\\
t_1\mapsto &T^{\square}(\gamma)=t_{\gamma} \label{t1}\\
t_2\mapsto &T^{\square}(\delta)=t_{\delta} \label{t2}\\
t_3\mapsto &T^{\square}(\gamma\delta)=T^{\square}(\delta\gamma)=(1+t_{\gamma})(1+t_{\delta})+\frac{1}{2}z-1,\label{t3}
\end{align}
where $z=x_{12}y_{21}+2x_{11}y_{11}+x_{21}y_{12}$.

\begin{Proposition}\label{Proposition1}
The map $\eqref{trace}$ makes $R_{\rho}^{\square,\psi}$ into a flat $\OO\llbracket t_1,t_2,t_3\rrbracket$-module.
\end{Proposition}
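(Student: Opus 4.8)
The plan is to invoke the miracle flatness criterion: a local homomorphism $A\to B$ of Noetherian local rings with $A$ regular and $B$ Cohen-Macaulay is flat as soon as $\dim B=\dim A+\dim B/\mm_A B$. Here $A=R^{\ps,\psi}=\OO\llbracket t_1,t_2,t_3\rrbracket$ is regular of dimension $4$, so the task reduces to showing that $B:=R_\rho^{\square,\psi}$ is Cohen-Macaulay of dimension $7$ and that the special fibre $B/\mm_A B$ has dimension $3$.

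First I would pin down the ring $B$. Viewed as a polynomial in $t_\gamma$ over $\OO\llbracket x_{11},\hat x_{12},x_{21}\rrbracket$, the first generator of $I$, namely $f_\gamma=(1+t_\gamma)^2-x_{11}^2-x_{12}x_{21}-\psi\epsilon(\gamma)$, is monic of degree $2$, its linear coefficient $2$ is a unit because $p>2$, and its constant term lies in the maximal ideal (using $\psi\epsilon\equiv 1\bmod\varpi$). Its reduction modulo the maximal ideal is $t_\gamma(t_\gamma+2)$, a product of coprime monic factors, so Hensel's lemma lifts the factorization; the factor lifting $t_\gamma+2$ has a unit constant term, hence is a unit, and therefore $f_\gamma$ generates an ideal of the form $(t_\gamma-a)$ with $a$ in the maximal ideal. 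The same applies to $f_\delta$, and eliminating $t_\gamma$ and $t_\delta$ identifies $R_\rho^{\square,\psi}$ with the regular ring $\OO\llbracket x_{11},\hat x_{12},x_{21},y_{11},\hat y_{12},y_{21}\rrbracket$ of dimension $7$; in particular $B$ is Cohen-Macaulay. (Alternatively, one may simply observe that $R_\rho^{\square,\psi}$ is a complete intersection.)

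Next I would compute $B/\mm_A B=R_\rho^{\square,\psi}/(\varpi,t_\gamma,t_\delta,(1+t_\gamma)(1+t_\delta)+\tfrac12 z-1)$ with the help of \eqref{t1}--\eqref{t3}. Setting $\varpi=t_\gamma=t_\delta=0$, and using $\psi\epsilon\equiv 1\bmod\varpi$ together with $[\phi(\gamma)]\equiv\phi(\gamma)$ and $[\phi(\delta)]\equiv\phi(\delta)$, the relations $f_\gamma,f_\delta$ become $\det X$ and $\det Y$ while the image of $t_3$ becomes $\tfrac12 z$, where now $z=\tr(XY)$ for the trace-zero matrices $X=\left(\begin{smallmatrix}x_{11}&\hat x_{12}+\phi(\gamma)\\x_{21}&-x_{11}\end{smallmatrix}\right)$ and $Y=\left(\begin{smallmatrix}y_{11}&\hat y_{12}+\phi(\delta)\\y_{21}&-y_{11}\end{smallmatrix}\right)$; so $B/\mm_A B\cong k\llbracket x_{11},\hat x_{12},x_{21},y_{11},\hat y_{12},y_{21}\rrbracket/(\det X,\det Y,\tr(XY))$. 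It then remains to show that $\det X,\det Y,\tr(XY)$ form a regular sequence in this six-variable power series ring, which I would check by passing to the polynomial ring: the quadratic $x_{11}^2+x_{12}x_{21}$ is geometrically irreducible, so $k[x_{11},x_{12},x_{21}]/(\det X)$ is a geometrically integral domain of dimension $2$; its tensor product over $k$ with $k[y_{11},y_{12},y_{21}]/(\det Y)$ is a domain of dimension $4$; and $\tr(XY)=2x_{11}y_{11}+x_{12}y_{21}+x_{21}y_{12}$ does not lie in $(\det X,\det Y)$, because any element of that ideal of degree at most $2$ is a $k$-linear combination of $\det X$ and $\det Y$ and hence involves no monomial divisible by both an $x$- and a $y$-variable, whereas $\tr(XY)$ contains $2x_{11}y_{11}$. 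Thus $\tr(XY)$ is a nonzerodivisor and the quotient has dimension $3$; localizing at the maximal ideal cut out by the base point (a Teichm\"uller translate of the origin in the $x_{12},y_{12}$-coordinates, which moves the point but not the dimension) and completing preserves this, so $\dim B/\mm_A B=3$.

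Finally $\dim B=7=4+3=\dim A+\dim B/\mm_A B$, with $A$ regular and $B$ Cohen-Macaulay, so miracle flatness gives that $R_\rho^{\square,\psi}$ is flat over $\OO\llbracket t_1,t_2,t_3\rrbracket$. I expect the main obstacle to be the dimension count for the special fibre --- that is, the verification that $\tr(XY)$ is a nonzerodivisor modulo $(\det X,\det Y)$ --- since this is the one point where geometric irreducibility of the rank-three quadric and the degree argument are genuinely needed; the rest is bookkeeping.
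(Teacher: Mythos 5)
Your proposal uses the same high-level strategy as the paper --- miracle flatness, with $A=\OO\llbracket t_1,t_2,t_3\rrbracket$ regular of dimension $4$ and $B=R_\rho^{\square,\psi}$ Cohen--Macaulay of dimension $7$, reducing the claim to $\dim B/\mm_A B=3$ --- and is correct, but the verification of that last dimension count is genuinely different. The paper proves $\dim B/\mm_A B=3$ by exhibiting an explicit system of parameters of length three, with a three-way case distinction according to whether $x_{12}$, resp.\ $y_{12}$, is a unit (equivalently, whether $\phi(\gamma)$, resp.\ $\phi(\delta)$, is nonzero). You instead pass to the polynomial ring $k[x_{11},x_{12},x_{21},y_{11},y_{12},y_{21}]$ and argue that $\det X,\det Y,\tr(XY)$ form a regular sequence there: $\det X$ is a rank-three quadric (using $p>2$), hence geometrically irreducible, so $k[x]/(\det X)$ is geometrically integral of dimension $2$; tensoring with the domain $k[y]/(\det Y)$ gives a domain of dimension $4$; and the degree argument showing $\tr(XY)\notin(\det X,\det Y)$ then exhibits $\tr(XY)$ as a nonzerodivisor, dropping the dimension to exactly $3$. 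The passage to the completed local ring at the (Teichm\"uller-translated) base point is correctly justified because, $V(\det X,\det Y,\tr(XY))$ being cut out by three equations in $\mathbb{A}^6$, Krull's height theorem forces every irreducible component to have dimension exactly $3$, so the local dimension is $3$ at every point of the variety. Your approach trades the paper's concrete but case-heavy system-of-parameters computation for a uniform argument relying on geometric integrality and the equidimensionality consequence of Krull's theorem; both are valid, and yours is arguably cleaner once the commutative-algebra inputs are granted.
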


\begin{proof}
Let $\mm$ denote the maximal ideal of $\OO\llbracket t_1,t_2,t_3\rrbracket$. Since $R_{\rho}^{\square,\psi}$ is a regular local ring modulo a regular sequence, it is Cohen-Macaulay. Since $\OO\llbracket t_1,t_2,t_3\rrbracket$ is regular, the statement is equivalent to \[\dim \OO\llbracket t_1,t_2,t_3\rrbracket+\dim R_{\rho}^{\square,\psi}/\mm R_{\rho}^{\square,\psi}=\dim R_{\rho}^{\square,\psi},\] see for example \cite[Thm. 18.16]{MR1322960}. But since ${\dim \OO\llbracket t_1,t_2,t_3\rrbracket=4}$, ${\dim R_{\rho}^{\square,\psi}=7}$ by \eqref{Rdet} and \[R_{\rho}^{\square,\psi}/\mm R_{\rho}^{\square,\psi}\cong k\llbracket x_{11},\hat x_{12},x_{21},y_{11},\hat y_{12},y_{21}\rrbracket/(x_{11}^2+x_{12}x_{21},y_{11}^2+y_{12}y_{21},z)\] by (\ref{trace})-(\ref{t3}), it just remains to prove that \[\dim k\llbracket x_{11},\hat x_{12},x_{21},y_{11},\hat y_{12},y_{21}\rrbracket/(x_{11}^2+x_{12}x_{21},y_{11}^2+y_{12}y_{21},z)=3.\] We distinguish $3$ cases:
If $x_{12}\in (R_{\rho}^{\square,\psi})^{\times}$, we obtain \[R_{\rho}^{\square,\psi}/\mm R_{\rho}^{\square,\psi}\cong k\llbracket x_{11},\hat x_{12},y_{11},\hat y_{12}\rrbracket/(y_{11}^2-y_{12}x_{12}^{-1}(2x_{11}y_{11}-y_{12}x_{11}^2x_{12}^{-1})),\] so that $\{x_{11},\hat x_{12},\hat y_{12}\}$ is a system of parameters for $R_{\rho}^{\square,\psi}/\mm R_{\rho}^{\square,\psi}$. Analogously, if $y_{12}\in (R_{\rho}^{\square,\psi})^{\times}$, then $\{y_{11},\hat y_{12},\hat x_{12}\}$ is a system of parameters.
So the only case left is when $x_{12},y_{12}\notin (R_{\rho}^{\square,\psi})^{\times}$. But it is easy to see that in this case $\{x_{12},y_{21},x_{21}-y_{12}\}$ is a system of parameters for $R_{\rho}^{\square,\psi}/\mm R_{\rho}^{\square,\psi}$, which finishes the proof.
\end{proof}

\section{Locally algebraic vectors}
In this section we want to adapt the strategy of \cite[\S 4]{MR3306557} to show that part c) of Pa{\v{s}}k{\={u}}nas' Theorem \ref{BMC} holds in the following setting.
Let from now on $R:=R_{\rho}^{\square,\psi}$, $\pi\cong (\text{Ind}_{P}^{G}\mathds{1}\otimes \omega^{-1})_{\sm}$, $\tilde{P}$ a projective envelope of $\pi^{\vee}$ in $\mathfrak{C}(\OO)$. Let $N:=(R\oplus R)\hat\otimes_{\tilde{E},\eta}\tilde{P}$, where the $\tilde{E}$-module structure on $R\oplus R$ is induced by $\rho^{\square}$, as in \eqref{eq:Etilde}.

In \cite[\S5.6]{MR3150248} Pa{\v{s}}k{\=u}nas defines a covariant exact functor 
\begin{equation}
\check{\mathbf{V}}\colon \mathfrak{C}(\OO)\rightarrow \Mod_{G_{\Qp}}^{\pro}(\OO),
\end{equation}
which is a modification of Colmez' Montreal functor, see \cite{MR2642409}. It satisfies 
\begin{equation}
\check{\mathbf{V}}((\text{Ind}_{P}^{G}\chi_1\otimes \chi_2\omega^{-1})^{\vee})=\chi_1,
\end{equation}
so that in our case 
\begin{equation}
\check{\mathbf{V}}((\text{Ind}_{P}^{G}\mathds{1}\otimes \omega^{-1})^{\vee})=\mathds{1}.
\end{equation}
For an admissible unitary $L$-Banach space representation $\Pi$ of $G$ with central character $\psi$ and an open bounded $G$-invariant lattice $\Theta$ in $\Pi$, we define 
\begin{equation}
\Theta^d:=\Hom_{\OO}(\Theta,\OO),
\end{equation}
which lies in $\mathfrak{C}(\OO)$. We also define 
\begin{equation}
\check{\mathbf{V}}(\Pi):=\check{\mathbf{V}}(\Theta^d)\otimes_{\OO}L,
\end{equation}
which is independent of the choice of $\Theta$.

\begin{Lemma}\label{Lemma4}
$N$ satisfies the following three properties (see \cite[\S4]{MR3306557}):

\begin{enumerate}
\item[(N0)] $k\hat\otimes_R N$ is of finite length in $\mathfrak{C}(\OO)$ and is finitely generated over $\OO\llbracket K\rrbracket$,
\item[(N1)] $\Hom_{\SL_2(\Qp)}(1,N^{\vee})=0$,
\item[(N2)] $\check{\mathbf{V}}(N)\cong \rho^{\square}$ as $R\llbracket G_{\Qp}\rrbracket$-modules.
\end{enumerate}
\end{Lemma}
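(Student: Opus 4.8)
The plan is to verify the three properties (N0), (N1), (N2) in turn, using that $\tilde P$ is a projective envelope of $\pi^\vee$ in $\mathfrak C(\OO)$ and the explicit description of $\tilde E$ and of $\check{\mathbf V}$ recalled above. For (N0), note that $k\hat\otimes_R N\cong (k\oplus k)\hat\otimes_{\tilde E,\eta}\tilde P$ since $R$ receives the $\tilde E$-action through $\eta$ and $k=R/\mm_R$; as in the proof of Lemma \ref{lemma1}, because $\tilde P$ is flat over the local ring $\tilde E$ and $k\hat\otimes_{\tilde E}\tilde P\cong\pi^\vee$, the module $(k\oplus k)\hat\otimes_{\tilde E}\tilde P$ is a (two-step) extension of $\pi^\vee$ by itself. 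Since $\pi^\vee$ has finite length in $\mathfrak C(\OO)$ and $\pi$ is admissible, so is this extension, which gives both the finite length in $\mathfrak C(\OO)$ and finite generation over $\OO\llbracket K\rrbracket$; this is exactly the computation behind (N0) in \cite[\S4]{MR3306557}.

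For (N1), I would again reduce modulo $\mm_R$: by Nakayama-type reasoning it suffices to check $\Hom_{\SL_2(\Qp)}(\mathds 1,(k\hat\otimes_R N)^\vee)=0$, and by the previous paragraph $(k\hat\otimes_R N)^\vee$ is an extension of $\pi$ by $\pi$. So it is enough to show $\Hom_{\SL_2(\Qp)}(\mathds 1,\pi)=0$, i.e. that $\pi=(\mathrm{Ind}_P^G \mathds 1\otimes\omega^{-1})_{\sm}$ has no $\SL_2(\Qp)$-fixed vectors. This is a concrete statement about the principal series: by Frobenius reciprocity, $\SL_2(\Qp)$-invariants of the smooth induction would force a nonzero $B\cap\SL_2(\Qp)$-invariant functional on $\mathds 1\otimes\omega^{-1}$ compatible along the whole of $\SL_2$, which is ruled out because $\omega^{-1}$ is ramified (nontrivial on $\Zp^\times$, as $p>2$) — equivalently $\pi$ is irreducible and infinite-dimensional, hence not the trivial $\SL_2(\Qp)$-representation on its socle. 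I would phrase this via the vanishing of $\SL_2(\Qp)$-coinvariants of $\mathds 1\otimes\omega^{-1}$, exactly as in \cite[\S4]{MR3306557}, the point being that passing from $R$ to $k$ does not change the $\SL_2(\Qp)$-invariants since these are detected on the cosocle/socle which is $\pi$.

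For (N2), the key input is that $\check{\mathbf V}$ factors through $k\hat\otimes_{\tilde E}\tilde P\cong\pi^\vee$ with value $\mathds 1$, and that $\check{\mathbf V}$ intertwines the $\tilde E$-action on $\tilde P$ with the Galois action in the sense of \cite[Cor. 9.27]{MR3150248}: $\tilde E\cong (R^{\ps,\psi}\hat\otimes_\OO\OO\llbracket G_{\Qp}\rrbracket)/J$ acts on $\check{\mathbf V}(\tilde P)$ compatibly with the tautological $G_{\Qp}$-action. Concretely, $\check{\mathbf V}(\tilde P)$ is known (by Pa\v sk\=unas) to be the universal deformation object on the pseudo-character side, and tensoring along $\eta\colon\tilde E\to M_2(R)$ — which is the map induced by $\rho^\square$ — one gets $\check{\mathbf V}(N)=\check{\mathbf V}((R\oplus R)\hat\otimes_{\tilde E,\eta}\tilde P)\cong (R\oplus R)\otimes_{\tilde E,\eta}\check{\mathbf V}(\tilde P)$, and the right-hand side is $\rho^\square$ by the very construction of $\eta$ from $\rho^\square$ and the Cayley--Hamilton/Cayley-Hamilton quotient identification. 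The one subtlety is commuting $\check{\mathbf V}$ with the completed tensor product $(R\oplus R)\hat\otimes_{\tilde E,\eta}(-)$: since $\check{\mathbf V}$ is exact and $R\oplus R$ is finitely generated over $\tilde E$ (via $\eta$, as $\tilde E$ is finite over $R^{\ps,\psi}$ and $R$ is finite over... actually $R$ need not be finite over $R^{\ps,\psi}$, so one should write $R\oplus R$ as an inverse limit of finite-length $\tilde E$-modules and use continuity of $\check{\mathbf V}$ on $\mathfrak C(\OO)$), a limiting argument exactly parallel to the proof of Lemma \ref{lemma2} lets one move $\check{\mathbf V}$ inside.

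I expect the main obstacle to be (N2), specifically making the identification $\check{\mathbf V}(N)\cong\rho^\square$ \emph{as $R\llbracket G_{\Qp}\rrbracket$-modules} rather than merely up to isomorphism of $G_{\Qp}$-representations: one must track that the $R$-structure coming from the first tensor factor matches the $R$-structure on $\rho^\square$ coming from $R=R_\rho^{\square,\psi}$ being a framed deformation ring, which requires knowing that $\eta$ is precisely the map classifying $\rho^\square$ under Pa\v sk\=unas' identification of $\tilde E$ with the Cayley--Hamilton quotient. Properties (N0) and (N1) are essentially formal once one reduces mod $\mm_R$ and cites \cite[\S4, \S9]{MR3150248} and \cite[\S4]{MR3306557}; I would keep those short and spend the bulk of the argument, and of the cited machinery, on (N2).
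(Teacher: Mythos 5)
Your proposal is correct, and for (N0) and (N2) it runs essentially parallel to the paper: (N0) is the same observation that $k\hat\otimes_R N$ is an extension of $\pi^\vee$ by itself with $\pi$ absolutely irreducible and admissible, and (N2) is the same appeal to the commutation of $\check{\mathbf V}$ with $\hat\otimes_{\tilde E,\eta}$ (the paper cites \cite[Lemma 5.53]{MR3150248} for this) together with $\check{\mathbf V}(\tilde P)$ being free of rank one over $\tilde E$ (\cite[Cor. 5.55]{MR3150248}); your worry about the $R$-module structure is handled precisely by the $\tilde E$-linearity of that commutation, so it is not an extra gap.

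For (N1) you take a genuinely different route. The paper works upstream: it notes $\pi^{\SL_2(\Qp)}=0$, observes that $\tilde P^\vee$ is an injective envelope of $\pi$ (hence has essential socle $\pi$), and that $G$ acts through $\det$ on $(\tilde P^\vee)^{\SL_2(\Qp)}$, which forces $(\tilde P^\vee)^{\SL_2(\Qp)}=0$; vanishing for $N^\vee$ then follows since $N$ is built from $\tilde P$ by completed tensor and inverse limits. You instead reduce mod $\mm_R$ directly and check the vanishing on the finite-length quotient $(k\hat\otimes_R N)^\vee$, an extension of $\pi$ by $\pi$. This reduction is legitimate, but the one-line ``Nakayama-type reasoning'' deserves to be spelled out: the point is that $(N^\vee)^{\SL_2(\Qp)}$ is a \emph{discrete} $R$-module, hence a union of its finite-length $R$-submodules, so it is nonzero only if its $\mm_R$-torsion is nonzero, and $(N^\vee)[\mm_R]=(N/\mm_R N)^\vee=(k\hat\otimes_R N)^\vee$. (Be careful that the naive Nakayama implication goes in the opposite direction for compact modules; it is the discreteness of $N^\vee$ that makes your reduction valid.) Both proofs are sound; the paper's is a bit slicker, yours makes the passage from $\tilde P$ to $N$ more explicit.
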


\begin{proof}
As we have already seen in the proof of Lemma \ref{lemma1}, $k\hat\otimes_R N$ is an extension of $\pi^{\vee}$ by itself. Since $\pi$ is absolutely irreducible and admissible we get $(N0)$. From \cite[Lemma 5.53]{MR3150248} we obtain that 
\begin{equation}
\check{\mathbf{V}}(\rho^{\square}\hat\otimes_{\tilde{E},\eta}\tilde{P})\cong \rho^{\square}\hat\otimes_{\tilde{E},\eta}\check{\mathbf{V}}(\tilde{P}),
\end{equation}
and since $\check{\mathbf{V}}(\tilde{P})$ is a free $\tilde{E}$-module of rank $1$ by \cite[Cor. 5.55]{MR3150248}, also $(N2)$ holds. For $(N1)$ we notice that $\pi^{\text{SL}_2(\Qp)}=0$. Since $\tilde{P}$ is a projective envelope of $\pi^{\vee}$, $\tilde{P}^{\vee}$ is an injective envelope of $\pi$. Since $G$ acts on $(\tilde{P}^{\vee})^{\text{SL}_2(\Qp)}$ via the determinant, we must have $(\tilde{P}^{\vee})^{\text{SL}_2(\Qp)}=0$.
\end{proof}

\begin{Remark}\label{Rmk2}
Let $\mathrm{m}$ be a $R[1/p]$-module of finite length. Then Lemma \ref{Lemma4} implies that
\[\check{\mathbf{V}}(\Pi(\mathrm{m}))\cong \mathrm{m}\otimes_R\check{\mathbf{V}}(N),\]
see \cite[Rmk. 4.2, Lemma 4.3]{MR3306557}.
\end{Remark}

The following Proposition is analogous to \cite[4.14]{MR3306557} and shows that condition (i) of part c) of Pa{\v{s}}k{\={u}}nas' Theorem \ref{BMC} is satisfied in our setting.
\begin{Proposition}\label{Proposition2}
Let $V$ be either $\sigma(\mathbf{w},\tau)$ or $\sigma^{\cris}(\mathbf{w},\tau)$, let $\pp\in \m-Spec(R[1/p])$ and $\kappa(\pp):=R[1/p]/\pp.$ Then
\[\dim_{\kappa(\pp)}\Hom_K(V,\Pi(\kappa(\pp)))\leq 1.\]
If $V=\sigma(\mathbf{w},\tau)$, then $\dim_{\kappa(\pp)}\Hom_K(V,\Pi(\kappa(\pp)))=1$ if and only if $\rho^{\square}_{\pp}$ is potentially semi-stable of type $(\mathbf{w},\tau,\psi)$.\\
If $V=\sigma^{\cris}(\mathbf{w},\tau)$, then $\dim_{\kappa(\pp)}\Hom_K(V,\Pi(\kappa(\pp)))=1$ if and only if $\rho^{\square}_{\pp}$ is potentially crystalline of type $(\mathbf{w},\tau,\psi)$.
\end{Proposition}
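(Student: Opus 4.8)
The plan is to reduce the statement about $\Hom_K(V,\Pi(\kappa(\pp)))$ to a statement about locally algebraic vectors in the Banach space representation $\Pi(\kappa(\pp))$, and then to identify those with the Galois-side data via the functor $\check{\mathbf V}$. First I would invoke the adjunction/definition of $\Pi(\kappa(\pp))$ together with the fact that $\kappa(\pp)$ has finite length over $R[1/p]$, so that $\Pi(\kappa(\pp))$ is an admissible unitary $L$-Banach space representation of $G$ with central character $\psi$; by the theory of locally algebraic vectors (Emerton), a nonzero $K$-equivariant map from $V=\sigma(\tau)\otimes W$ (where $W=\Sym^{b-a-1}L^2\otimes\det^a$ encodes the Hodge–Tate weights) into $\Pi(\kappa(\pp))$ forces $\Pi(\kappa(\pp))^{\mathrm{l\,alg}}\neq 0$, and conversely, by the classical local Langlands correspondence and Henniart's characterisation of $\sigma(\tau)$ (resp.\ Kisin's $\sigma^{\cris}(\tau)$) recalled in the introduction, $\Hom_K(V,\Pi(\kappa(\pp)))$ picks out exactly the $(\sigma(\tau),W)$-isotypic piece of the locally algebraic vectors. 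So the whole question becomes: what are the locally algebraic vectors of $\Pi(\kappa(\pp))$, and what is the associated Weil–Deligne representation?

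The key step is then to use Remark \ref{Rmk2}, which gives $\check{\mathbf V}(\Pi(\kappa(\pp)))\cong \kappa(\pp)\otimes_R\check{\mathbf V}(N)\cong \kappa(\pp)\otimes_R\rho^{\square}=\rho^{\square}_{\pp}$, using (N2) from Lemma \ref{Lemma4}. Thus $\check{\mathbf V}$ translates $\Pi(\kappa(\pp))$ into precisely the deformation $\rho^{\square}_{\pp}$ whose Hodge-theoretic properties we want to detect. I would then quote the compatibility of $\check{\mathbf V}$ with $p$-adic local Langlands and the classical local Langlands correspondence on locally algebraic vectors: concretely, if $\Pi(\kappa(\pp))^{\mathrm{l\,alg}}$ contains a representation $\pi_{\mathrm{sm}}\otimes W'$ with $\pi_{\mathrm{sm}}$ smooth irreducible, then $\mathrm{WD}(\check{\mathbf V}(\Pi(\kappa(\pp))))\cong \mathrm{LL}(\pi_{\mathrm{sm}})$ (up to normalisation/twist) and the Hodge–Tate weights of $\rho^{\square}_{\pp}$ are read off from $W'$; this is the analogue of the input used in \cite[\S4]{MR3306557}, relying on results of Colmez, Emerton, Kisin and Paškūnas, and on Berger–Breuil on the classification of unitary principal series. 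Combining these: $\Hom_K(\sigma(\mathbf w,\tau),\Pi(\kappa(\pp)))\neq 0$ iff $\Pi(\kappa(\pp))^{\mathrm{l\,alg}}$ has a constituent $\pi_{\mathrm{sm}}\otimes(\Sym^{b-a-1}\otimes\det^a)$ with $\mathrm{LL}(\pi_{\mathrm{sm}})|_{I_{\Qp}}\cong\tau$, which by the above dictionary and the $p$-adic Hodge-theoretic characterisation of potential semi-stability (via admissible filtered $(\varphi,N,\mathrm{Gal})$-modules) is equivalent to $\rho^{\square}_{\pp}$ being potentially semi-stable of Hodge type $(\mathbf w,\tau,\psi)$; the condition on the determinant $\psi\epsilon$ is automatic since $N$, hence $\Pi(\kappa(\pp))$, has central character $\psi$. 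The crystalline case is identical, with $\sigma^{\cris}(\tau)$ detecting the vanishing of the monodromy operator $N$ on $\mathrm{WD}$, equivalently potential crystallinity.

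For the dimension bound $\dim_{\kappa(\pp)}\Hom_K(V,\Pi(\kappa(\pp)))\leq 1$, the point is multiplicity one: for a two-dimensional $\rho^{\square}_{\pp}$, the locally algebraic vectors of the associated Banach space $\Pi(\kappa(\pp))$, when nonzero, are of the form $\pi_{\mathrm{sm}}\otimes W'$ with $\pi_{\mathrm{sm}}$ irreducible (this uses that $\rho^{\square}_{\pp}$ is absolutely irreducible or, in the reducible case, a careful analysis of principal series — here one must be a little careful because $\rho$ has scalar semi-simplification, so the generic fibre could contain reducible points; one argues that even then the locally algebraic vectors, if present, have a unique irreducible constituent of the relevant $K$-type, using Henniart's result that $\sigma(\tau)$ is irreducible and the structure of $\Pi(\kappa(\pp))$ as studied by Paškūnas). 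Granting this, $\Hom_K(V,\pi_{\mathrm{sm}}\otimes W')=\Hom_K(\sigma(\tau),\pi_{\mathrm{sm}})\otimes_L \Hom_K(W,W')$, and each factor is at most one-dimensional — the first by Henniart's uniqueness statement, the second because $W$ and $W'$ are irreducible algebraic $K$-representations, so $\Hom_K(W,W')$ is $0$ or $L$ depending on whether the Hodge–Tate weights match. I expect the main obstacle to be exactly this last structural input: establishing that $\Pi(\kappa(\pp))^{\mathrm{l\,alg}}$ is either zero or has the expected single-$K$-type shape at \emph{all} closed points $\pp$, including the reducible/indecomposable ones that arise because $\rho^{\square}$ is the framed deformation ring of a non-semisimple $\rho$ with $\rho^{\mathrm{ss}}=\chi\oplus\chi$; one must rule out pathological decompositions of the locally algebraic vectors, and this is precisely where the detailed classification of the relevant Banach/locally analytic representations (Berger–Breuil, Colmez, Paškūnas) is needed, mirroring but slightly extending the argument of \cite[Prop. 4.14]{MR3306557}.
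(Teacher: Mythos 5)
Your general outline is sound and tracks the strategy of \cite[Prop.~4.14]{MR3306557}: pass to locally algebraic vectors, read off the $p$-adic Hodge type via $\check{\mathbf{V}}$ together with compatibility with local Langlands, and deduce the multiplicity-one bound from Henniart's and Kisin's characterisations of $\sigma(\tau)$ and $\sigma^{\cris}(\tau)$. The paper in fact simply \emph{cites} that proposition for the bulk of the argument, using that $N$ satisfies (N0), (N1), (N2) by Lemma~\ref{Lemma4}, so up to that point your account and the paper's agree.

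However, there is a genuine gap exactly where you flag it yourself: you note that one must ``rule out pathological decompositions of the locally algebraic vectors'' at reducible or indecomposable points, but you neither isolate the problematic case nor supply the argument that handles it. Because the semi-simplification of $\rho$ is $\chi\oplus\chi$ in this paper (unlike the setting of \cite{MR3306557}), there are closed points $\pp$ of $R[1/p]$ at which, after a finite scalar extension, $\rho^{\square}_{\pp}$ is an extension of two characters $\chi_1,\chi_2$ having the \emph{same} Hodge--Tate weight. This is precisely the single case not covered by \cite[Prop.~4.14]{MR3306557}, and it requires a separate argument. The paper's resolution is short but essential: such a $\rho^{\square}_{\pp}$ is never potentially semi-stable of any Hodge type with distinct Hodge--Tate weights $\mathbf{w}=(a,b)$, $b>a$, so the ``only if'' direction is vacuous and it suffices to prove $\Hom_K(V,\Pi(\kappa(\pp)))=0$. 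By \cite[Prop.~3.4.2]{MR2251474}, $\Pi(\kappa(\pp))$ is an extension of two continuous principal series $\Pi_1,\Pi_2$, and \cite[Prop.~12.5]{MR3150248} gives $\Pi_1^{\alg}=\Pi_2^{\alg}=0$; hence $\Pi(\kappa(\pp))^{\alg}=0$ and, since $V$ is locally algebraic, $\Hom_K(V,\Pi(\kappa(\pp)))\cong\Hom_K(V,\Pi(\kappa(\pp))^{\alg})=0$. Without this step your argument does not establish the proposition at all the closed points of $R[1/p]$ that actually occur in the scalar semi-simplification setting.
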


\begin{proof}
Let $F/\kappa(\pp)$ be a finite extension. We have
\[\dim_{\kappa(\pp)}\Hom_K(V,\Pi(\kappa(\pp)))=\dim_F\Hom_K(V\otimes_{\kappa(\pp)} F,\Pi(\kappa(\pp))\otimes_{\kappa(\pp)} F),\]
see for example \cite[Lemma 5.1]{MR3150248}. Thus by replacing $\kappa(\pp)$ by a finite extension, we can assume without loss of generality that $\rho^{\square}_{\pp}$ is either absolutely irreducible or reducible. Since $\rho^{\square}_{\pp}$ is a lift of $\rho\cong \begin{pmatrix} \mathds{1} & \ast \\ 0 & \mathds{1} \end{pmatrix}$ and $N$ satisfies (N0), (N1) and (N2) by Lemma \ref{Lemma4}, the only case that is not handled in \cite[4.14]{MR3306557} is when $\rho^{\square}_{\pp}$ is an extension
\[\xymatrix{0\ar[r]&\chi_1\ar[r]&\rho^{\square}_{\pp}\ar[r]&\chi_2\ar[r]&0},\]
where $\chi_1,\chi_2$ are two characters that have the same Hodge-Tate weight. Such a representation is clearly never of any Hodge-type with distinct Hodge-Tate weights, so it is enough to show that $\dim_{\kappa(\pp)}\Hom_K(V,\Pi(\kappa(\pp))=0$. It follows, for example from \cite[Prop. 3.4.2]{MR2251474}, that $\Pi(\kappa(\pp))$ is an extension of $\Pi_2:=(\text{Ind}_{P}^{G}\chi_2\otimes \chi_1\epsilon^{-1})_{\text{cont}}$ by $\Pi_1:=(\text{Ind}_{P}^{G}\chi_1\otimes \chi_2\epsilon^{-1})_{\text{cont}}$. If we denote the locally algebraic vectors of $\Pi_i$ by $\Pi_i^{\alg}$, then \cite[Prop. 12.5]{MR3150248} tells us that $\Pi_1^{\alg}=\Pi_2^{\alg}=0$. But this implies that also $\Pi(\kappa(\pp))^{\alg}=0$, and since $V$ is a locally algebraic representation, we have
\[\Hom_K(V,\Pi(\kappa(\pp)))\cong \Hom_K(V,\Pi(\kappa(\pp))^{\alg})=0.\]
\end{proof}

To apply Pa{\v{s}}k{\={u}}nas Theorem \ref{BMC}, we have to find a set of 'good' primes of $R[1/p]$ that is dense in $\Supp M(\Theta)$.

\begin{Definition}\label{Definition1}
Let $\Sigma\subset \Supp M(\Theta)\cap \m-Spec(R[1/p])$ consist of all primes $\pp$ such that either  $\Pi(\kappa(\pp))$ is reducible but non-split or $\Pi(\kappa(\pp))$ is absolutely irreducible and $\Pi(\kappa(\pp))^{\alg}$ is irreducible.
\end{Definition}

\begin{Proposition}\label{Proposition3}
$\Sigma$ is dense in $\Supp M(\Theta)$.
\end{Proposition}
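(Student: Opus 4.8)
The plan is to follow the strategy of \cite[\S 4]{MR3306557} and show that the complement of $\Sigma$ inside $\Supp M(\Theta)$ is contained in a proper closed subset, which forces $\Sigma$ to be dense since $\Supp M(\Theta)$ — being, by Theorem \ref{theorem1} together with Proposition \ref{Proposition1}, a union of equidimensional components — has no embedded components that could hide a dense ``bad'' locus. More precisely, by Remark \ref{Rmk2} we have $\check{\mathbf{V}}(\Pi(\kappa(\pp)))\cong \kappa(\pp)\otimes_R\check{\mathbf{V}}(N)\cong \rho^{\square}_{\pp}$, so the reducibility and splitting behaviour of $\Pi(\kappa(\pp))$ is controlled entirely by that of $\rho^{\square}_{\pp}$ via the (exactness and faithfulness properties of the) functor $\check{\mathbf{V}}$ and the results of \cite[\S12]{MR3150248} describing $\Pi$ in the reducible case. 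The first step is therefore to translate ``$\pp\notin\Sigma$'' into a condition on $\rho^{\square}_{\pp}$: either $\rho^{\square}_{\pp}$ is reducible and split, or $\rho^{\square}_{\pp}$ is reducible with the two Jordan--H\"older characters differing by $\epsilon^{\pm 1}$ (the exceptional cases where the parabolic induction $\Pi$ has reducible or larger-than-expected locally algebraic vectors), or $\rho^{\square}_{\pp}$ is absolutely irreducible but $\Pi(\kappa(\pp))^{\alg}$ fails to be irreducible — and then to check that each of these is a Zariski-closed condition cut out by vanishing of appropriate entries/expressions in the explicit presentation \eqref{Rdet} of $R=R_\rho^{\square,\psi}$.

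The key steps, in order, are: (1) recall the explicit universal framed deformation $\rho^{\square}$ from \eqref{rhosquare}--\eqref{delta} and note that $\rho^{\square}_{\pp}$ is reducible precisely when a certain ideal (the ``reducibility ideal'', generated by off-diagonal-type expressions in $x_{ij},y_{ij}$) is contained in $\pp$, hence reducibility is a proper closed condition on each irreducible component on which $\rho^{\square}$ is generically irreducible; (2) observe dually that on the component(s) where $\rho^{\square}$ is generically reducible, being \emph{split} is again a proper closed condition (the extension class, an $H^1$, vanishes on a proper closed subset), and the finitely many ``exceptional'' reducible types (characters differing by $\epsilon^{\pm 1}$, relevant to \cite[Prop. 12.5, Prop. 12.7]{MR3150248}) cut out still smaller closed subsets, since the ratio of the two characters varies and can equal $\epsilon^{\pm1}$ only on a proper closed locus; (3) handle the absolutely irreducible locus: by \cite[Thm. 4.35 and \S12]{MR3150248} the locally algebraic vectors $\Pi(\kappa(\pp))^{\alg}$ are nonzero and reducible only when $\rho^{\square}_{\pp}$ is potentially crystalline with scalar associated Frobenius eigenvalue (the ``$\phi$ scalar'' or critical slope condition), which again is a proper closed condition; (4) assemble: $\Supp M(\Theta)$ is covered by finitely many irreducible closed subsets (its components), and on each one the locus of $\pp\notin\Sigma$ is contained in a proper closed subset, so by equidimensionality and the absence of embedded primes (Theorem \ref{BMC}(b)-type input, available here because $N$ is flat over $R$ and $R$ is Cohen-Macaulay by Proposition \ref{Proposition1}) the set $\Sigma$ meets every component in a dense subset, whence $\Sigma$ is dense in $\Supp M(\Theta)$.

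The main obstacle I expect is step (3): controlling exactly when $\Pi(\kappa(\pp))^{\alg}$ is reducible for absolutely irreducible $\rho^{\square}_{\pp}$, and confirming that this ``bad'' locus is genuinely proper (i.e.\ not all of some component of $\Supp M(\Theta)$). This requires combining the classification of unitary completions and their locally algebraic vectors from \cite[\S12]{MR3150248} with the local Langlands / Emerton-type dictionary, and checking that among the deformations $\rho^{\square}_{\pp}$ of our specific $\rho=\left(\begin{smallmatrix}\mathds{1}&\ast\\0&\mathds{1}\end{smallmatrix}\right)$ lying in $\Supp M(\Theta)$ there really do exist, densely, irreducible potentially crystalline points with \emph{non-scalar} Frobenius — so that the reducible-$\Pi^{\alg}$ locus is a proper subvariety. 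A secondary subtlety is bookkeeping the exceptional reducible types from step (2) to be sure none of them exhausts a whole component; here one uses the explicit shape of \eqref{Rdet}, where the diagonal parameters $t_\gamma,t_\delta$ (hence the Hodge--Tate--type data of the two characters) vary freely enough that the coincidence ``ratio $=\epsilon^{\pm1}$'' is visibly a non-trivial equation.
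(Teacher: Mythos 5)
Your high-level skeleton is the right one — $\Supp M(\Theta)$ is equidimensional (and $R[1/p]$ is Jacobson, $R$ is $\OO$-torsion free), so density of $\Sigma$ reduces to showing that the complement of $\Sigma$ has strictly smaller dimension — and this is indeed how the paper reduces the problem. However, your proposal stops short of proving the decisive part: that the two bad loci actually are small. You assert in steps (1)--(3) that each bad condition is "a proper closed condition on each component", but you do not verify properness, and you explicitly flag step (3) as the main obstacle without resolving it. That unresolved step \emph{is} the content of the proposition.

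The paper closes this gap by concrete dimension counts. For the absolutely-irreducible-with-reducible-$\Pi^{\alg}$ locus, it does \emph{not} argue via a "scalar Frobenius" closed condition; instead it invokes Colmez's classification $\Pi(\kappa(\pp))^{\alg}\cong(\mathrm{Ind}_P^G\chi|\cdot|\otimes\chi|\cdot|^{-1})_{\sm}\otimes W$ together with the central character constraint $\chi^2\epsilon^{a+b}=\psi$, which pins $\chi$ down to finitely many possibilities, and then Berger--Breuil's irreducibility of the universal unitary completion to conclude there are only \emph{finitely many} such Banach representations at all. Each corresponds to a single closed point $x_\pp$ of $\Spec R^{\ps,\psi}[1/p]$, and Proposition~\ref{Proposition1} (flatness of $R$ over $R^{\ps,\psi}$, with fiber dimension $\dim R/\mm_{R^{\ps,\psi}}R=3$) bounds each fiber by $3$. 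This is strictly stronger than "a proper closed condition", and it is needed: without it, one cannot rule out that this locus fills an entire $4$-dimensional component of $\Supp M(\Theta)[1/p]$. For the split locus, the paper writes down the explicit relations $I_1,\dots,I_7$ in the presentation \eqref{Rdet} (traces, determinants, and commutativity) and exhibits a $4$-element system of parameters for $R_\rho^\square/(I_1,\dots,I_7)$ in each of three cases, giving dimension $\le 4$, hence $\le 3$ after inverting $p$. Your step (2) gestures at "the extension class vanishes on a proper closed subset" but gives no such computation, and your extra subcase about $\delta_1\delta_2^{-1}=\epsilon^{\pm1}$ is vacuous here (it is not in the complement of $\Sigma$ as defined, and in any case it cannot occur since $\delta_1\delta_2^{-1}\equiv\mathds{1}\not\equiv\omega^{\pm1}\bmod\varpi$), so effort spent there does not help. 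In short: correct framework, but the actual proof — the finiteness via Colmez/Berger--Breuil, the flat-fiber bound, and the explicit system of parameters — is missing.
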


\begin{proof}
We already know that $M(\Theta)$ is Cohen-Macaulay by applying Theorem \ref{theorem1} to Pa{\v{s}}k{\={u}}nas' Theorem \ref{Criterion}. Since $R$ is $\OO$-torsion free and $R[1/p]$ is Jacobson, it is enough to show that the dimension of the complement of $\Sigma$ in $\Supp M(\Theta)\cap \m-Spec(R[1/p])$ is strictly smaller than the dimension of $R[1/p]$, which is equal to $4$.

Let first $\pp\in \m-Spec R[1/p]$ be such that $\Pi(\kappa(\pp))$ is absolutely irreducible and $\Pi(\kappa(\pp))^{\alg}$ is reducible. By a result of Colmez \cite[Thm. VI.6.50]{MR2642409} we know that in this case we have $\Pi(\kappa(\pp))^{\alg}\cong \pi\otimes W$, where $W$ is an irreducible algebraic $G$-representation and $\pi\cong (\text{Ind}_P^G\chi\left|.\right|\otimes\chi\left|.\right|^{-1})_{\sm}$ for some smooth character $\chi$. In particular, if the Hodge-Tate weights are $\mathbf{w}=(a,b)$, we have $W\cong \Sym^{b-a-1}L^2\otimes \det^a$. But since $\det\rho^{\square}=\psi\epsilon$, the product of the central characters of $\pi$ and $W$ must be $\psi$, so that we obtain $\chi^2\epsilon^{a+b}=\psi$, which can only be satisfied by a finite number of characters $\chi$. By a result of Berger-Breuil \cite[Cor. 5.3.2]{MR2642406}, the universal unitary completion of $\Pi^{\alg}$ is topologically irreducible in this case and therefore isomorphic to $\Pi$. Hence there are only finitely many absolutely irreducible Banach space representations $\Pi(\kappa(\pp))$ such that $\Pi(\kappa(\pp))^{\alg}$ is reducible. Moreover, all of them give rise to a point $x_{\pp}\in \m-Spec R^{\ps,\psi}[1/p]$ by taking the trace of the associated $G_{\Qp}$-representation $\rho^{\square}_{\pp}=\check{\mathbf{V}}(\Pi(\kappa(\pp)))$. We already know from Proposition \ref{Proposition1} that $R$ is flat over $R^{\ps,\psi}$ and $\dim R/\mm_{R^{\ps,\psi}}R=3$. Thus, above every prime $x_{\pp}$ there lies only an at most $3$-dimensional family of primes $\pp\in\m-Spec R[1/p]$ such that $\Pi(\kappa(\pp))$ is absolutely irreducible and $\Pi(\kappa(\pp))^{\alg}$ is reducible.

Let now $\pp\in \Supp\ M(\Theta)$ be such that, after extending scalars if necessary, $\rho^{\square}_{\pp}$ is split. Hence from Proposition \ref{Proposition2} we know that $\rho^{\square}_{\pp}$ is potentially semi-stable of a Hodge type $(\mathbf{w},\tau,\psi)$ determined by $\Theta$, where $\mathbf{w}=(a,b)$, $\tau=\chi_1\oplus\chi_2$ and $\chi_i\colon I_{\Qp}\rightarrow \GL_2(\overline{\QQ}_p)$ have finite image. We claim that the closed subset of $\m-Spec R_{\rho}^{\square}[1/p]$ consisting of points of the Hodge type above, is of dimension at most $3$. As before, $\rho^{\square}$ factors through the maximal pro-$p$ quotient $G_{\Qp}(p)$ of $G_{\Qp}$, which is a free pro-$p$ group of rank $2$, generated by a 'cyclotomic' generator $\gamma$ and an 'unramified' generator $\delta$. From our assumptions we see that for every representation $\rho^{\square}_\pp$ of the type above there are unramified characters $\mu_1,\mu_2$ such that up to conjugation 
\begin{equation}\label{rhosemi}
\rho^{\square}_{\pp}\sim
\begin{pmatrix}
\epsilon^b\chi_1\mu_1 & 0\\
0 & \epsilon^a\chi_2\mu_2
\end{pmatrix}.
\end{equation}
As in \eqref{rhosquare}, we have $R_{\rho}^{\square}\cong \OO\llbracket x_{11},\hat x_{12},x_{21},t_{\gamma},y_{11},\hat y_{12},y_{21},t_{\delta}\rrbracket$ with the universal framed deformation determined by
\begin{gather}
\rho^{\square}(\gamma)=\begin{pmatrix} 1+t_{\gamma}+x_{11}&x_{12}\\x_{21}&1+t_{\gamma}-x_{11}\end{pmatrix},\label{rhogamma}\\
\rho^{\square}(\delta)=\begin{pmatrix} 1+t_{\delta}+y_{11}&y_{12}\\y_{21}&1+t_{\delta}-y_{11}\end{pmatrix}.\label{rhodelta}
\end{gather}
Since the trace is invariant under conjugation, we get the following identities from \eqref{rhosemi}-\eqref{rhodelta}:
\begin{align}
I_1\colon &\epsilon^b\chi_1(\gamma)+\epsilon^a\chi_2(\gamma)=2(1+t_{\gamma}),\label{eq:I1}\\
I_2\colon &\mu_1(\delta)+\mu_2(\delta)=2(1+t_{\delta}).\label{eq:I2}
\end{align}
We get
\[R_{\rho}^{\square}/(I_1,I_2)\cong \OO\llbracket x_{11},\hat x_{12},x_{21},y_{11},\hat y_{12},y_{21}\rrbracket.\]
Moreover, using \eqref{eq:I1},\eqref{eq:I2}, we get the following relations for the determinants:
\begin{align}
I_3\colon &x_{11}^2+x_{12}x_{21}=\frac{1}{4}(\epsilon^a\chi_1(\gamma)-\epsilon^b\chi_2(\gamma))^2\label{eq:I3},\\
I_4\colon &y_{11}^2+y_{12}y_{21}=\frac{1}{4}(\mu_1(\delta)-\mu_2(\delta))^2.\label{eq:I4}
\end{align}
Since we assume the representation $\rho^{\square}_{\pp}$ to be split, it is, in particular, abelian. This can be summed up in the following relations:
\begin{align}
I_5:\ &0=x_{12}y_{21}-x_{21}y_{12},\\
I_6:\ &0=x_{12}y_{11}-x_{11}y_{12},\\
I_7:\ &0=x_{21}y_{11}-x_{11}y_{21}.
\end{align}
We want to find a system of parameters $\mathcal{S}$ for $R_{\rho}^{\square}/(I_1,\dots,I_7)$ of length at most $4$. 
If $x_{12}\in (R^{\square}_{\rho})^{\times}$, it is easy to check that $\mathcal{S}=\{\varpi,\hat x_{12},\hat y_{12},x_{11}\}$ is such a system. If $y_{12}\in (R^{\square}_{\rho})^{\times}$, we can take $\mathcal{S}=\{\varpi,\hat x_{12},\hat y_{12},y_{11}\}$. In the last case, when $x_{12},y_{12}\in \mm_{R^{\square}_{\rho}}$, which means that $\hat x_{12}=x_{12},\hat y_{12}=y_{12}$, we can take $\mathcal{S}=\{\varpi,x_{12},y_{21},x_{21}-y_{12}\}$. Thus $\dim R_{\rho}^{\square}/(I_1,\dots,I_7)\le 4$ and since $R$ is $\OO$-torsion free, we obtain 
\begin{equation}
\dim R_{\rho}^{\square}[1/p]/(I_1,\dots,I_7)\le 3,
\end{equation}
which proves the claim.
\end{proof}

The next step is to prove that part c)ii) of Pa{\v{s}}k{\=u}nas' Theorem \ref{BMC} is satisfied for all $\pp\in\Sigma$. The following definition is analogous to \cite[4.17]{MR3306557}.

\begin{Definition}\label{Def:RED}
Let $\Ban^{\adm}_{G,\psi}(L)$ be the category of admissible $L$-Banach space representations of $G$ with central character $\psi$ and let $\Pi$ in $\Ban^{\adm}_{G,\psi}(L)$ be absolutely irreducible.
Let $\mathcal{E}$ be the subspace of $\Ext^1_{G,\psi}(\Pi,\Pi)$ that is generated by extensions $0 \rightarrow \Pi\rightarrow E\rightarrow\Pi\rightarrow 0$
such that the resulting sequence of locally algebraic vectors $0 \rightarrow \Pi^{\alg}\rightarrow E^{\alg}\rightarrow\Pi^{\alg}\rightarrow 0$ is exact. 
We say that $\Pi$ satisfies (RED), if $\Pi^{\alg}\neq0$ and $\dim \mathcal{E}\le 1$.
\end{Definition}

The following lemma is a generalization of \cite[Lemma 4.18]{MR3306557} which avoids the assumption $\dim_L\Hom_G(\Pi,E)=1$.

\begin{Lemma}\label{lemma5}
Let $\Pi\in\Ban^{\adm}_{G,\psi}(L)$ be absolutely irreducible. Let $n\ge 1$ and let 
\begin{equation}\label{eq:RED}
0\rightarrow \Pi\rightarrow E\rightarrow \Pi^{\oplus n}\rightarrow 0
\end{equation}
be an exact sequence in $\Ban^{\adm}_{G,\psi}(L)$. Let $V$ be either $\sigma(\mathbf{w},\tau)$ or $\sigma^{\cris}(\mathbf{w},\tau)$. If $\Pi^{\alg}$ is irreducible and $\Pi$ satisfies (RED), then \[\dim_L\Hom_K(V,E)\le\dim_L\Hom_G(\Pi,E)+1.\]
\end{Lemma}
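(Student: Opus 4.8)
The plan is to pass to the subspaces of locally algebraic vectors, where the hypotheses force every representation in sight to be a finite direct sum of copies of the single irreducible object $\Pi^{\alg}$, and then to read off the bound from the class of \eqref{eq:RED} in $\Ext^1_{G,\psi}(\Pi,\Pi)^{\oplus n}$ together with $\dim_L\mathcal{E}\le 1$. First I would record two inputs. Since $K$ is open in $G$, a vector of an admissible unitary Banach representation of $G$ is locally algebraic for $K$ if and only if it is for $G$; as $V$ is a locally algebraic $K$-representation, every $K$-map $V\to E$ has image in $E^{\alg}$, so $\Hom_K(V,-)$ on admissible Banach representations depends only on the locally algebraic part, and $(-)^{\alg}$ is left exact. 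Moreover $\Pi^{\alg}\neq 0$ by (RED) and $\Pi^{\alg}$ is irreducible by hypothesis, hence (after enlarging $L$, as in the proof of Proposition \ref{Proposition2}) absolutely irreducible and of the form $\pi\otimes W$ with $\pi$ smooth irreducible and $W$ irreducible algebraic by \cite[Thm. VI.6.50]{MR2642409}; the multiplicity-one property of the $K$-types $\sigma(\tau)$ and $\sigma^{\cris}(\tau)$ then yields $\dim_L\Hom_K(V,\Pi^{\alg})\le 1$ exactly as in \cite[4.14]{MR3306557}.

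Next I would apply $(-)^{\alg}$ to \eqref{eq:RED}. Using $(\Pi^{\oplus n})^{\alg}=(\Pi^{\alg})^{\oplus n}$ and $\Pi\cap E^{\alg}=\Pi^{\alg}$ one obtains a short exact sequence $0\to\Pi^{\alg}\to E^{\alg}\xrightarrow{q}Q\to 0$ in the abelian category of admissible locally algebraic representations, where $Q:=\operatorname{Im}q$ is a subobject of the semisimple object $(\Pi^{\alg})^{\oplus n}$; by Schur's lemma $Q\cong\Pi^{\alg}\otimes_L U$ for a unique subspace $U\subseteq L^n$, and I set $m:=\dim_L U$. Applying $\Hom_K(V,-)$ gives
\[\dim_L\Hom_K(V,E)=\dim_L\Hom_K(V,E^{\alg})\le\dim_L\Hom_K(V,\Pi^{\alg})+\dim_L\Hom_K(V,Q)\le 1+m.\]
On the other hand, writing $(\xi_1,\dots,\xi_n)\in\Ext^1_{G,\psi}(\Pi,\Pi)^{\oplus n}=\Ext^1_{G,\psi}(\Pi^{\oplus n},\Pi)$ for the class of \eqref{eq:RED} and $r:=\dim_L\operatorname{span}_L\{\xi_1,\dots,\xi_n\}$, the long exact sequence of $\Hom_G(\Pi,-)$ applied to \eqref{eq:RED}, together with $\End_G(\Pi)=L$, identifies the connecting map $\Hom_G(\Pi,\Pi^{\oplus n})=L^n\to\Ext^1_{G,\psi}(\Pi,\Pi)$ with $u\mapsto\sum_i u_i\xi_i$, so that $\dim_L\Hom_G(\Pi,E)=1+(n-r)$. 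Hence it suffices to prove $m\le n-r+1$.

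The crux is the claim that $\xi_u:=\sum_i u_i\xi_i$ lies in $\mathcal{E}$ for every $u\in U$. Indeed $\xi_u$ is the class of the pullback $E_u$ of \eqref{eq:RED} along the map $u\colon\Pi\to\Pi^{\oplus n}$, $v\mapsto(u_1v,\dots,u_nv)$, giving $0\to\Pi\to E_u\to\Pi\to 0$. Realizing $E_u=E\times_{\Pi^{\oplus n}}\Pi$ as a closed subrepresentation of $E\oplus\Pi$ and using that the locally algebraic vectors of a closed subrepresentation are its intersection with $(E\oplus\Pi)^{\alg}=E^{\alg}\oplus\Pi^{\alg}$, one computes $E_u^{\alg}=\{(e,v)\in E^{\alg}\oplus\Pi^{\alg}:q(e)=u(v)\}$; since $u\in U$, for every $v\in\Pi^{\alg}$ the vector $u(v)$ lies in $u(\Pi^{\alg})\subseteq Q=\operatorname{Im}q$, so $E_u^{\alg}\to\Pi^{\alg}$ is surjective and the sequence of locally algebraic vectors of $E_u$ is short exact, whence $\xi_u\in\mathcal{E}$ by Definition \ref{Def:RED}. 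Thus the linear map $L^n\to\Ext^1_{G,\psi}(\Pi,\Pi)$, $u\mapsto\xi_u$, carries $U$ into $\mathcal{E}$, which has dimension $\le 1$ by (RED); since its kernel has dimension $n-r$, we get $m-\dim_L(U\cap\ker)\le 1$ and hence $m\le n-r+1$. Combining with the previous paragraph, $\dim_L\Hom_K(V,E)\le 1+m\le n-r+2=\dim_L\Hom_G(\Pi,E)+1$.

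I expect the main technical obstacle to be the clean identification $E_u^{\alg}=E^{\alg}\times_{(\Pi^{\alg})^{\oplus n}}\Pi^{\alg}$ and, closely related, the verification that $\operatorname{Im}q$ is genuinely of the form $\Pi^{\alg}\otimes_L U$ for a subspace $U\subseteq L^n$ (i.e.\ that $E^{\alg}$ and its image lie in an abelian category of admissible locally algebraic representations in which $\Pi^{\alg}$ is simple, so that Schur's lemma applies); everything else is formal manipulation of long exact sequences, and the only genuinely nontrivial inputs are $\dim_L\mathcal{E}\le 1$, i.e.\ (RED), and the multiplicity-one bound $\dim_L\Hom_K(V,\Pi^{\alg})\le 1$.
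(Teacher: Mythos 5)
Your proof is correct but proceeds by a genuinely different route from the paper's. The paper applies $\Hom_G(\Pi,-)$ and $\Hom_K(V,-)$ to the sequence \eqref{eq:RED} to build a commutative ladder with exact rows, where the vertical maps are given by composition with a fixed generator $\iota\in\Hom_K(V,\Pi)$; since $\Hom_G(\Pi,\Pi)\to\Hom_K(V,\Pi)$ and $\Hom_G(\Pi,\Pi^{\oplus n})\to\Hom_K(V,\Pi^{\oplus n})$ are isomorphisms, a short diagram chase yields an exact sequence $0\to\Hom_G(\Pi,E)\to\Hom_K(V,E)\to\ker(\alpha)$, where $\alpha\colon\Ext^1_{G,\psi}(\Pi,\Pi)\to\Ext^1_{K,\psi}(V,\Pi)$ is the natural pullback/restriction map. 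The paper then concludes by asserting that irreducibility of $\Pi^{\alg}$ forces $\ker(\alpha)=\mathcal{E}$, and (RED) bounds that by $1$. You instead pass to locally algebraic vectors of \eqref{eq:RED} directly, write the image in $(\Pi^{\alg})^{\oplus n}$ as $\Pi^{\alg}\otimes_L U$ by Schur's lemma, reduce the desired inequality to the numerical claim $\dim U\le n-r+1$ where $r$ is the rank of the extension class $(\xi_1,\ldots,\xi_n)$, and then verify by an explicit pullback computation that $\xi_u\in\mathcal{E}$ for all $u\in U$.

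The two arguments trade off in interesting ways. The paper's diagram chase is shorter, but its closing step ($\ker(\alpha)=\mathcal{E}$) conceals both inclusions, one of which requires essentially the same multiplicity-one input you invoke, while the other is what your pullback computation makes explicit; your argument needs only the single inclusion $\{\xi_u:u\in U\}\subseteq\mathcal{E}$, never the reverse. On the other hand, you pay for this by invoking more categorical infrastructure: you use semisimplicity of $(\Pi^{\alg})^{\oplus n}$ and Schur's lemma inside a category of locally algebraic representations to obtain the decomposition $Q\cong\Pi^{\alg}\otimes_L U$, and you need the clean compatibility $E_u^{\alg}=E^{\alg}\times_{(\Pi^{\alg})^{\oplus n}}\Pi^{\alg}$ for the fibre product, neither of which the paper's proof requires. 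Both uses of the hypotheses --- irreducibility of $\Pi^{\alg}$ (via the $K$-type multiplicity-one), absolute irreducibility of $\Pi$ (Schur for $\Hom_G$), and $\dim_L\mathcal{E}\le1$ (RED) --- appear in both proofs, so the two arguments are of comparable depth; yours is simply more computational where the paper's is more formal.
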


\begin{proof}
Since $\Pi^{\alg}$ is irreducible, we obtain by \cite[Lemma 4.10]{MR3306557} and \cite{He1} that ${\dim_L\Hom_K(V,\Pi)=1}$.
We apply the functors $\Hom_G(\Pi,\_)$ and $\Hom_K(V,\_)$ to the sequence (\ref{eq:RED}) to obtain the following diagram with exact rows.
\[
\xymatrix{
0 \ar[r]& \Hom_G(\Pi,\Pi)\ar[r]\ar[d]^{\cong}&\Hom_G(\Pi,E)\ar[r]\ar@{^{(}->}[d]&\Hom_G(\Pi,\Pi^{\oplus n})\ar[r]\ar[d]^{\cong}&\Ext^1_{G,\psi}(\Pi,\Pi)\ar[d]^{\alpha}\\
0 \ar[r]& \Hom_K(V,\Pi)\ar[r]&\Hom_K(V,E)\ar[r]&\Hom_K(V,\Pi^{\oplus n})\ar[r]&\Ext^1_{K,\psi}(V,\Pi),}
\]
where $\Ext^1$ means the Yoneda extensions in $\Ban^{\adm}_{G,\psi}(L)$ resp. $\Ban^{\adm}_{K,\psi}(L)$.
The diagram yields an exact sequence
\[
\xymatrix{
0\ar[r]&\Hom_G(\Pi,E)\ar[r]&\Hom_K(V,E)\ar[r]&\ker(\alpha),
}\]
and therefore 
\begin{equation}
\dim_L\Hom_K(V,\Pi)\le\dim_L\Hom_G(\Pi,E)+\dim_L\ker(\alpha).
\end{equation}
The irreducibility of $\Pi^{\alg}$ implies that $\ker(\alpha)$ is equal to the space $\mathcal{E}$ of Definition \ref{Def:RED}. Since we assume that $\Pi$ satisfies (RED), we are done.
\end{proof}

\begin{Lemma}\label{lemma6}
Let $\pp\in\Sigma$. If $\End_{G_{\Qp}}(\rho^{\square}_{\pp})=\kappa(\pp)$, then
\[\dim_{\kappa(\pp)}\Hom_{G_{\Qp}}\big(\rho^{\square,\psi}[1/p]/\pp^2,\rho^{\square}_{\pp}\big)=4.\]
If $\rho^{\square}_{\pp}$ is reducible such that there is a non-split exact sequence
\[\xymatrix{0\ar[r]&\delta_2\ar[r]&\rho^{\square}_{\pp}\ar[r]&\delta_1\ar[r]&0,}\]
with $\delta_1\delta_2^{-1}\neq\mathds{1},\epsilon^{\pm 1}$, then
\[\dim_{\kappa(\pp)}\Hom_{G_{\Qp}}\big(\rho^{\square,\psi}[1/p]/\pp^2,\delta_1\big)=4.\]
\end{Lemma}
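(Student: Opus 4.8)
The plan is to reduce the computation to a tangent-space calculation on the framed deformation ring $R_{\rho}^{\square,\psi}$ and then to a concrete linear-algebra count, using the fact that $\rho^{\square}_{\pp}$ has trivial reduction and is therefore captured by the explicit presentation \eqref{Rdet}, \eqref{rhogamma}--\eqref{rhodelta} through the two topological generators $\gamma,\delta$ of $G_{\Qp}(p)$. First I would recall that $\Hom_{G_{\Qp}}(\rho^{\square,\psi}[1/p]/\pp^2,\rho^{\square}_{\pp})$ classifies $\kappa(\pp)$-valued deformations of $\rho^{\square}_{\pp}$ over the dual numbers $\kappa(\pp)[\epsilon]$ that arise by base change along some $\kappa(\pp)$-point of $\Spec(R_{\rho}^{\square,\psi}[1/p]/\pp^2)$; concretely, such homomorphisms are in bijection with the relative cotangent space $\Hom_{\kappa(\pp)}(\pp/(\pp^2+\varpi),\kappa(\pp))=\pp/\pp^2$ as a $\kappa(\pp)$-vector space, which has dimension equal to $\dim R_{\rho}^{\square,\psi}[1/p]=7$ minus the number of local equations cutting $\pp$ out — but a cleaner route is to identify $\Hom_{G_{\Qp}}(\rho^{\square,\psi}[1/p]/\pp^2,\rho^{\square}_{\pp})$ with the space of \emph{framed} deformations of $\rho^{\square}_{\pp}$ with fixed determinant $\psi\epsilon$, i.e. with $Z^1_{\psi}(G_{\Qp},\ad\rho^{\square}_{\pp})$, the space of $1$-cocycles valued in the trace-zero adjoint. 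Since the deformation problem is framed (unrestricted), this cocycle space is the whole space $Z^1$ and its dimension is computed by the local Euler characteristic formula.

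Next I would invoke the local Euler-characteristic formula for $G_{\Qp}$: for any finite-dimensional $\kappa(\pp)$-representation $M$ of $G_{\Qp}$ one has $\dim H^0-\dim H^1+\dim H^2=-[\kappa(\pp):\Qp]\dim M\cdot[\Qp:\Qp]=-\dim M$ (with $\kappa(\pp)$-coefficients, $\dim_{\kappa(\pp)}$ throughout), while $H^2$ is Tate-dual to $H^0$ of the Tate twist. Applying this with $M=\ad^0\rho^{\square}_{\pp}$ (trace-zero adjoint, dimension $3$) gives $\dim Z^1_{\psi}=\dim H^0(\ad^0)+\dim H^1(\ad^0)=\dim H^0(\ad^0)+\big(\dim H^0(\ad^0)+\dim H^2(\ad^0)+3\big)$, and by local duality $\dim H^2(\ad^0\rho^{\square}_{\pp})=\dim H^0(\ad^0\rho^{\square}_{\pp}(1))$. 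Then I split into the two cases of the lemma. In the first case $\End_{G_{\Qp}}(\rho^{\square}_{\pp})=\kappa(\pp)$ forces $\dim H^0(\ad^0)=0$; since $\pp\in\Sigma$ and $\rho^{\square}_{\pp}=\check{\mathbf V}(\Pi(\kappa(\pp)))$ with $\Pi(\kappa(\pp))^{\alg}$ irreducible, the representation $\rho^{\square}_{\pp}$ is not a twist of $\mathds{1}\oplus\epsilon^{\pm1}$, hence also $H^0(\ad^0\rho^{\square}_{\pp}(1))=0$; so $\dim Z^1_{\psi}=0+(0+0+3)=3$, and adding back the framing variables — here is where the "$4$'' rather than "$3$'' comes from — the framed (as opposed to unframed) fixed-determinant deformation space has dimension $\dim Z^1_{\psi}+\dim(\GL_2)-\dim H^0(\ad\rho^{\square}_{\pp})=3+4-\dim H^0(\ad^0)-1=3+4-0-1$... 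I would instead argue directly: framing adds $\dim\GL_2-\dim H^0(\ad\rho^{\square}_{\pp})=4-1=3$ parameters when $H^0(\ad^0)=0$, but one must be careful that the determinant is already fixed, so the correct count is $\dim Z^1_{\psi}(\text{framed})=\dim Z^1(\ad^0)+\dim(\GL_2/\mathbb G_m)=\dim Z^1(\ad^0)+3$, giving the missing bookkeeping; the cleanest presentation replaces this by reading $\dim_{\kappa(\pp)}\pp/\pp^2$ directly off the presentation \eqref{Rdet} and the prime-avoidance system of parameters already exhibited in the proof of Proposition \ref{Proposition3}, which cut $\pp$ out by exactly $\dim R_{\rho}^{\square,\psi}[1/p]-4=3$ equations, yielding the answer $4$.

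In the second (reducible, non-split) case with $0\to\delta_2\to\rho^{\square}_{\pp}\to\delta_1\to0$ and $\delta_1\delta_2^{-1}\notin\{\mathds{1},\epsilon^{\pm1}\}$, I would compute $\dim_{\kappa(\pp)}\Hom_{G_{\Qp}}(\rho^{\square,\psi}[1/p]/\pp^2,\delta_1)$ by the same dual-numbers interpretation: such homomorphisms classify pairs consisting of a deformation of $\rho^{\square}_{\pp}$ in the framed fixed-determinant problem together with a compatible deformation of the quotient character $\delta_1$, equivalently $\Hom$ out of $\pp/\pp^2$ landing in the one-dimensional $\delta_1$-isotypic part; the key input is the vanishing $H^0(\delta_1\delta_2^{-1})=H^0(\delta_2\delta_1^{-1})=0$ and $H^2(\delta_1\delta_2^{-1})=H^0(\delta_2\delta_1^{-1}(1))=0$ coming from $\delta_1\delta_2^{-1}\neq\mathds{1},\epsilon$, together with $H^0(\delta_1\delta_2^{-1}(1))$-type vanishings from $\delta_1\delta_2^{-1}\neq\epsilon^{-1}$, which force the relevant $\Ext$ and $\Hom$ groups among $\delta_1,\delta_2$ to have the expected dimensions ($\dim\Ext^1_{G_{\Qp}}(\delta_1,\delta_1)=\dim\Ext^1_{G_{\Qp}}(\delta_2,\delta_2)=2$ with fixed-determinant constraint cutting this appropriately, and $\dim\Ext^1_{G_{\Qp}}(\delta_1,\delta_2)=1$). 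Running the five-lemma/long-exact-sequence bookkeeping exactly as in the first case then gives $4$.

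The main obstacle I anticipate is not the Euler-characteristic arithmetic but the careful bookkeeping of the \emph{framing}: one must track precisely how many extra parameters the framing contributes in each case and check that it is insensitive to whether $\rho^{\square}_{\pp}$ is irreducible or reducible-non-split — in both cases $H^0(\ad^0\rho^{\square}_{\pp})=0$ (in the reducible case because $\delta_1\ne\delta_2$), so the framing uniformly adds $3$ and the total is $\dim H^1(\ad^0)+3$ with $\dim H^1(\ad^0\rho^{\square}_{\pp})=1$, landing on $4$. The secondary subtlety is verifying that for $\pp\in\Sigma$ the "bad'' twist cases $\rho^{\square}_{\pp}\cong\chi\oplus\chi\epsilon^{\pm1}$ are excluded — this follows from the defining property of $\Sigma$ (Definition \ref{Definition1}): if $\Pi(\kappa(\pp))$ is reducible non-split the quotient data $\delta_1,\delta_2$ already satisfy $\delta_1\delta_2^{-1}\ne\epsilon^{\pm1}$ by hypothesis, and if $\Pi(\kappa(\pp))$ is absolutely irreducible with $\Pi(\kappa(\pp))^{\alg}$ irreducible then by Colmez' classification $\rho^{\square}_{\pp}$ is irreducible and non-exceptional, so the relevant $H^2$ vanishes.
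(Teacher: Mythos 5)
Your proposal mislocates what is actually being computed. The group $\Hom_{G_{\Qp}}\big(\rho^{\square,\psi}[1/p]/\pp^2,\rho^{\square}_{\pp}\big)$ is \emph{not} the tangent (or cotangent) space of the framed fixed-determinant deformation problem at $\pp$, which is what your Euler-characteristic bookkeeping is aimed at. The tangent space is $(\pp/\pp^2)^{\ast}\cong Z^{1,\psi}(G_{\Qp},\ad\rho^{\square}_{\pp})$, and with $H^0(\ad^0)=0$, $H^2(\ad^0)=0$ one gets $\dim Z^{1,\psi}=\dim H^1(\ad^0)+\dim B^1=3+3=6$, not $4$. (Your intermediate formulas are also off: $\dim Z^1\ne\dim H^0+\dim H^1$ --- rather $\dim Z^1=\dim B^1+\dim H^1$ with $\dim B^1=\dim\ad^0-\dim H^0(\ad^0)$ --- and your claim $\dim H^1(\ad^0\rho^{\square}_{\pp})=1$ is wrong; the local Euler characteristic gives $3$. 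Likewise the ``read $\dim\pp/\pp^2$ off the presentation'' fallback cannot give $4$, since $\pp$ is a maximal ideal of the $6$-dimensional ring $R[1/p]$, so $\dim_{\kappa(\pp)}\pp/\pp^2\ge 6$.)

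The missing idea, which is exactly what the paper does, is to tensor $0\to\pp/\pp^2\to R[1/p]/\pp^2\to\kappa(\pp)\to 0$ with $\rho^{\square,\psi}[1/p]$ and apply $\Hom_{G_{\Qp}}(\,\_\,,\rho^{\square}_{\pp})$, producing
\[
0\to\End_{G_{\Qp}}(\rho^{\square}_{\pp})\to\Hom_{G_{\Qp}}\big(\rho^{\square,\psi}[1/p]/\pp^2,\rho^{\square}_{\pp}\big)\to(\pp/\pp^2)^{\ast}\xrightarrow{\ \partial\ }\Ext^1_{G_{\Qp}}(\rho^{\square}_{\pp},\rho^{\square}_{\pp}).
\]
Under $(\pp/\pp^2)^{\ast}\cong Z^{1,\psi}(G_{\Qp},\ad\rho^{\square}_{\pp})$, $\partial$ is the projection $Z^1\to H^1$, so $\ker(\partial)=B^1(G_{\Qp},\ad\rho^{\square}_{\pp})$, which has dimension $\dim\ad\rho^{\square}_{\pp}-\dim(\ad\rho^{\square}_{\pp})^{G_{\Qp}}=4-1=3$. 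The answer $4$ is $\dim\End(\rho^{\square}_{\pp})+\dim B^1=1+3$, not a tangent-space dimension. In the reducible case one repeats this with $\Hom_{G_{\Qp}}(\,\_\,,\delta_1)$, uses $\dim\Hom(\rho^{\square}_{\pp},\delta_1)=1$, factors $\partial'$ through $\Ext^1(\rho^{\square}_{\pp},\rho^{\square}_{\pp})\to\Ext^1(\rho^{\square}_{\pp},\delta_1)$, and identifies the residual kernel with $\Ext^1(\delta_1,\delta_2)/\mathcal{L}=0$ by the non-splitness of $\rho^{\square}_{\pp}$ and $\dim\Ext^1(\delta_1,\delta_2)=1$ (using $\delta_1\delta_2^{-1}\ne\mathds{1},\epsilon^{\pm1}$). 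None of this structure --- the long exact sequence, the identification of $\ker(\partial)$ with coboundaries, the passage through $\Ext^1(\delta_1,\delta_2)$ in the reducible case --- appears in your proposal.
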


\begin{proof}
We start with the exact sequence
\begin{equation}\label{eq:basic}
\xymatrix{0\ar[r]&\pp/\pp^2\ar[r]&R[1/p]/\pp^2\ar[r]&\kappa(\pp)\ar[r]&0.}
\end{equation}
Tensoring (\ref{eq:basic}) with $\rho^{\square,\psi}[1/p]$ over $R[1/p]$ and applying the functor $\Hom_{G_{\Qp}}(\_,\rho^{\square}_{\pp})$ yields the exact sequence
\[\Hom_{G_{\Qp}}\big(\rho^{\square,\psi}[1/p]/\pp^2,\rho^{\square}_{\pp}\big)\xlongrightarrow{}\Hom_{G_{\Qp}}\big(\pp/\pp^2\otimes_{R[1/p]}\rho^{\square,\psi}[1/p],\rho^{\square}_{\pp}\big)\xlongrightarrow{\partial} \Ext^1_{G_{\Qp}}(\rho^{\square}_{\pp},\rho^{\square}_{\pp}).\]
Since we assume $\End_{G_{\Qp}}(\rho^{\square}_{\pp})=\kappa(\pp)$, we have 
\[\dim_{\kappa(\pp)}\Hom_{G_{\Qp}}\big(\rho^{\square,\psi}[1/p]/\pp^2,\rho^{\square}_{\pp}\big)=1+\dim_{\kappa(\pp)}\ker(\partial).\]
We see that
\[\ker(\partial)=\{\phi\colon R\rightarrow \kappa(\pp)[\epsilon]\mid \rho^{\square,\psi}[1/p]\otimes_{R[1/p],\phi}\kappa(\pp)[\epsilon]\cong\rho^{\square}_{\pp}\oplus\rho^{\square}_{\pp}\ \text{as}\ G_{\Qp}\text{-reps.}\}.\]
Let $\phi\in\ker(\partial)$ and let $\hat R$ be the $\pp$-adic completion of $R[1/p]$. Then we can identify $\hat R$ with the universal framed deformation ring that parametrizes lifts of $\rho^{\square}_{\pp}$ with determinant $\psi\epsilon$ \cite[(2.3.5)]{MR2600871} and $\phi$ induces a morphism $\hat R\rightarrow \kappa(\pp)[\epsilon]$.
If we denote the adjoint representation of $\rho^{\square}_{\pp}$ by $\ad \rho^{\square}_{\pp}$, there is a natural isomorphism
\begin{gather}
\Hom_{\kappa(\pp)-\Alg}(\hat R,\kappa(\pp)[\epsilon])\cong Z^{1,\psi}(G_{\Qp},\ad \rho^{\square}_{\pp}),
\end{gather}
where $Z^{1,\psi}(G_{\Qp},\ad \rho^{\square}_{\pp})$ denotes the space of cocyles that correspond to deformations with determinant $\psi\epsilon$. Here the morphism  ${\phi\in\Hom_{\kappa(\pp)-\Alg}(\hat R,\kappa(\pp)[\epsilon])}$ that corresponds to a lift $\tilde{\rho}$ of $\rho^{\square}_{\pp}$ is mapped to the cocycle $\Phi$ that appears in the equality
\[\tilde{\rho}(g)=\rho^{\square}_{\pp}(g)(1+\Phi(g)\epsilon).\]
Since $\Ext^1_{G_{\Qp}}(\rho^{\square}_{\pp},\rho^{\square}_{\pp})\cong H^1(G_{\Qp},\ad \rho^{\square}_{\pp})$, we obtain that
\[\ker(\partial)=\{\phi\in Z^{1,\psi}(G_{\Qp},\ad \rho^{\square}_{\pp})\mid \phi=0\ \text{in}\ H^1(G_{\Qp},\ad \rho^{\square}_{\pp})\}.\]
Hence $\ker(\partial)\cong B^{1,\psi}(G_{\Qp},\ad \rho^{\square}_{\pp})$, the corresponding coboundaries.
There is an exact sequence
\begin{equation}\label{seq:ZH}
\xymatrix{
0\ar[r]&(\ad \rho^{\square}_{\pp})^{G_{\Qp}}\ar[r]&\ad \rho^{\square}_{\pp}\ar[r]&Z^1(G_{\Qp},\ad \rho^{\square}_{\pp})\ar[r]&H^1(G_{\Qp},\ad \rho^{\square}_{\pp})\ar[r]&0,
}
\end{equation}
where the middle map is given by $x\mapsto (g\mapsto gx-x)$. 
Since by assumption ${\End_{G_{\Qp}}(\rho_{\pp}^{\square})=\kappa(\pp)}$, we see from (\ref{seq:ZH}) that 
\[\dim_{\kappa(\pp)} B^{1,\psi}(G_{\Qp},\ad \rho^{\square}_{\pp})=3.\]

Let now $\rho^{\square}_{\pp}$ be reducible such that there is a non-split exact sequence
\[\xymatrix{0\ar[r]&\delta_2\ar[r]&\rho^{\square}_{\pp}\ar[r]&\delta_1\ar[r]&0},\]
with $\delta_1\neq\delta_2$. 
Tensoring (\ref{eq:basic}) with $\rho^{\square,\psi}[1/p]$ and applying the functor $\Hom_{G_{\Qp}}(\_,\delta_1)$ gives us an exact sequence
\begin{equation}\label{seq1}
\Hom_{G_{\Qp}}\big(\rho^{\square}[1/p]/\pp^2,\delta_1\big)\xlongrightarrow{}\Hom_{G_{\Qp}}\big(\pp/\pp^2\otimes_{R[1/p]}\rho^{\square,\psi}[1/p],\delta_1\big)\xlongrightarrow{\partial'} \Ext^1_{G_{\Qp}}(\rho^{\square}_{\pp},\delta_1).
\end{equation}
Since $\delta_1\neq\delta_2$ we have $\dim_{\kappa(\pp)}\Hom(\rho^{\square}_{\pp},\delta_1)=1$ and therefore
\begin{equation}\label{dim}
\dim_{\kappa(\pp)} \Hom_{G_{\Qp}}(\rho^{\square,\psi}[1/p]/\pp^2,\delta_1)=1+\dim_{\kappa(\pp)}\ker(\partial').
\end{equation}
Moreover, we obtain isomorphisms 
\begin{align}\label{isos}
\Hom_{G_{\Qp}}\big(\pp/\pp^2\otimes_{R[1/p]}\rho^{\square,\psi}[1/p],\delta_1\big)\cong (\pp/\pp^2)^*&\cong \Hom_{\kappa(\pp)-\Alg}(\hat R^{\square},\kappa(\pp)[\epsilon])\\
&\cong Z^{1,\psi}(G_{\Qp},\ad \rho^{\square}_{\pp}).
\end{align}
From (\ref{seq:ZH}) we obtain again that the kernel of the natural surjection 
\begin{equation}\label{surj}
\xymatrix{Z^1(G_{\Qp},\ad \rho^{\square}_{\pp})\ar@{->>}[r]& H^1(G_{\Qp},\ad \rho^{\square}_{\pp})\cong \Ext^1_{G_{\Qp}}(\rho^{\square}_{\pp},\rho^{\square}_{\pp})}
\end{equation} 
is $3$-dimensional. 
Hence \eqref{seq1}, and \eqref{isos}-\eqref{surj} give us an induced map 
\[\bar{\partial}'\colon \Ext^{1,\psi}_{G_{\Qp}}(\rho^{\square}_{\pp},\rho^{\square}_{\pp})\rightarrow \Ext^1_{G_{\Qp}}(\rho^{\square}_{\pp},\delta_1)\] with
\begin{equation}\label{partials}
\dim_{\kappa(\pp)}\ker(\partial')=3+\dim_{\kappa(\pp)}\ker(\bar{\partial}').
\end{equation}
Since $\End_{G_{\Qp}}(\rho^{\square}_{\pp})=\kappa(\pp)$, also the universal (non-framed) deformation ring $\hat R^{\un}$ of $\rho^{\square}_{\pp}$ exists, that parametrizes deformations of $\rho^{\square}_{\pp}$ with determinant $\psi\epsilon$. 
Therefore we can use the same argument as in the proof of \cite[Lemma 4.20.]{MR3306557}, with $\rho^{\square}_{\pp}$ instead of $\rho_{\pp}^{\un}$, to obtain that $\ker(\bar{\partial}')=\Ext^1_{G_{\Qp}}(\delta_1,\delta_2)/\mathcal{L}$, where $\mathcal{L}$ is the subspace corresponding to $\rho^{\square}_{\pp}$.
Since we assume $\delta_1\delta_2^{-1}\neq \mathds{1},\epsilon^{\pm 1}$, we have $\dim_{\kappa(\pp)}\Ext^1_{G_{\Qp}}(\delta_1,\delta_2)=1$ and obtain from \eqref{dim} and \eqref{partials} that
\[\dim_{\kappa(\pp)}\Hom_{G_{\Qp}}(\rho^{\square,\psi}[1/p]/\pp^2,\delta_1)=4.\]
\end{proof}

\begin{Corollary}\label{Cor1}
Let $V$ be either $\sigma(\mathbf{w},\tau)$ or $\sigma^{\cris}(\mathbf{w},\tau)$ and let $\Theta$ be a $K$-invariant $\OO$-lattice in $V$. Then for all $\pp\in\Sigma$,
\[\dim_{\kappa(\pp)}\Hom_K\big(V,\Pi(R[1/p]/\pp^2)\big)\le 5.\]
\end{Corollary}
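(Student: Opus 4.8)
The plan is to realise $E:=\Pi(R[1/p]/\pp^2)$ as a self-extension of $\Pi_0:=\Pi(\kappa(\pp))$, to convert the assertion into Galois $\Hom$-computations of the kind carried out in Lemma \ref{lemma6} by means of the functor $\check{\mathbf V}$, and to lose at most one dimension when passing from $G$ to $K$ through the diagram chase of Lemma \ref{lemma5}.

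First I would, exactly as in the proof of Proposition \ref{Proposition2}, enlarge $\kappa(\pp)$ so that everything below is defined over it, and observe that $N=(R\oplus R)\hat\otimes_{\tilde E,\eta}\tilde P$ is flat over $R$ (since $\tilde P$ is flat over $\tilde E$ and $R\oplus R$ is $R$-free), so that $\mathrm m\mapsto\Pi(\mathrm m)$ is exact. Applying this to $0\to\pp/\pp^2\to R[1/p]/\pp^2\to\kappa(\pp)\to0$, and using $\pp/\pp^2\cong\kappa(\pp)^{\oplus n}$ with $n:=\dim_{\kappa(\pp)}\pp/\pp^2$, I obtain a short exact sequence $0\to\Pi_0\to E\to\Pi_0^{\oplus n}\to0$ of admissible unitary Banach representations; by Remark \ref{Rmk2} and part (N2) of Lemma \ref{Lemma4}, $\check{\mathbf V}(E)\cong\rho^{\square,\psi}[1/p]/\pp^2$ and $\check{\mathbf V}(\Pi_0)\cong\rho^{\square}_{\pp}$ as $G_{\Qp}$-representations. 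By Definition \ref{Definition1} two cases remain: (A) $\Pi_0$ absolutely irreducible with $\Pi_0^{\alg}$ irreducible, and (B) $\Pi_0$ reducible but non-split. In case (B), since $\pp\in\Sigma\subseteq\Supp M(\Theta)$, Proposition \ref{Proposition2} forces $\rho^{\square}_{\pp}$ to be of Hodge type $(\mathbf w,\tau,\psi)$ with $b>a$, so its two Jordan--H\"older characters $\delta_1\neq\delta_2$ are distinct; hence in both cases $\End_G(\Pi_0)=\kappa(\pp)$ and $\End_{G_{\Qp}}(\rho^{\square}_{\pp})=\kappa(\pp)$.

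Next I would bound $\dim_{\kappa(\pp)}\Hom_G(\Pi_0,E)$ and then descend to $K$. Applying $\Hom_G(\Pi_0,\_)$ to the short exact sequence gives $\dim_{\kappa(\pp)}\Hom_G(\Pi_0,E)=1+\dim_{\kappa(\pp)}\ker(\gamma)$, where the components of the connecting map $\gamma\colon\End_G(\Pi_0)^{\oplus n}\to\Ext^1_{G,\psi}(\Pi_0,\Pi_0)$ are the tangent classes of $R$ at $\pp$; using the isomorphism $\Ext^1_{G,\psi}(\Pi_0,\Pi_0)\cong H^{1,\psi}(G_{\Qp},\ad\rho^{\square}_{\pp})$ induced by $\check{\mathbf V}$ (cf.\ \cite{MR3150248}), the fact that $n=\dim_{\kappa(\pp)} Z^{1,\psi}(G_{\Qp},\ad\rho^{\square}_{\pp})$ (the completion of $R[1/p]$ at $\pp$ being the framed deformation ring of $\rho^{\square}_{\pp}$), and that $Z^{1,\psi}\twoheadrightarrow H^{1,\psi}$ has kernel $B^{1,\psi}$, I would conclude — by the same cohomological manipulation as in the proof of Lemma \ref{lemma6} — that $\ker(\gamma)$ has dimension $\dim_{\kappa(\pp)}B^{1,\psi}(G_{\Qp},\ad\rho^{\square}_{\pp})=3$, whence $\dim_{\kappa(\pp)}\Hom_G(\Pi_0,E)=4$. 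Since $\dim_{\kappa(\pp)}\Hom_K(V,\Pi_0)=1$ by Proposition \ref{Proposition2}, the map $\Hom_G(\Pi_0,\Pi_0)\to\Hom_K(V,\Pi_0)$ is an isomorphism, so the diagram chase in the proof of Lemma \ref{lemma5} applies (that step does not use irreducibility of $\Pi_0$) and yields
\[\dim_{\kappa(\pp)}\Hom_K(V,E)\leq\dim_{\kappa(\pp)}\Hom_G(\Pi_0,E)+\dim_{\kappa(\pp)}\ker(\alpha)=4+\dim_{\kappa(\pp)}\ker(\alpha),\]
with $\alpha\colon\Ext^1_{G,\psi}(\Pi_0,\Pi_0)\to\Ext^1_{K,\psi}(V,\Pi_0)$, so it suffices to prove $\dim_{\kappa(\pp)}\ker(\alpha)\leq1$.

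In case (A) this is precisely (RED): $\ker(\alpha)=\mathcal E$ because $\Pi_0^{\alg}$ is irreducible, and $\mathcal E$ corresponds under $\check{\mathbf V}$ to the subspace of $H^{1,\psi}(G_{\Qp},\ad\rho^{\square}_{\pp})$ of first-order deformations of $\rho^{\square}_{\pp}$ staying potentially semistable (resp.\ crystalline) of type $(\mathbf w,\tau,\psi)$; since Kisin's framed deformation ring $R_{\rho}^{\square}(\mathbf w,\tau,\psi)[1/p]$ (resp.\ its crystalline analogue) is formally smooth of dimension $4$ and the $3$-dimensional space $B^{1,\psi}$ of conjugation directions lies in its tangent space, this subspace has dimension $\leq 4-3=1$, and Lemma \ref{lemma5} gives the bound $5$. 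In case (B), with $0\to\delta_2\to\rho^{\square}_{\pp}\to\delta_1\to0$ non-split and $\delta_1\neq\delta_2$, I would bound $\dim_{\kappa(\pp)}\ker(\alpha)\leq1$ when $\delta_1\delta_2^{-1}\neq\epsilon^{\pm1}$ by the argument of Lemma \ref{lemma6} (second part) together with $\dim_{\kappa(\pp)}\Ext^1_{G_{\Qp}}(\delta_1,\delta_2)=1$; the remaining possibility $\delta_1\delta_2^{-1}=\epsilon^{\pm1}$ forces $b-a=1$ and $\rho^{\square}_{\pp}$ semistable but not crystalline, hence does not arise for $V=\sigma^{\cris}(\mathbf w,\tau)$ by Proposition \ref{Proposition2}, and for $V=\sigma(\mathbf w,\tau)$ must be checked by hand (now $\dim_{\kappa(\pp)}\Ext^1_{G_{\Qp}}(\delta_1,\delta_2)=2$ contributes one extra dimension, but $\dim_{\kappa(\pp)}\Hom_G(\Pi_0,E)$ is still $4$, so $5$ remains an upper bound). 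The step I expect to be the main obstacle is case (B): the diagram of Lemma \ref{lemma5} survives, but (RED) is unavailable, so $\ker(\alpha)$ has to be estimated by a hands-on analysis of the non-split extension of principal series $0\to\Pi_1\to\Pi_0\to\Pi_2\to0$ underlying $\Pi_0$ (Berger--Breuil), with care about which factor carries the locally algebraic vectors, and the degenerate extensions with $\delta_1\delta_2^{-1}=\epsilon^{\pm1}$ — outside the reach of Lemma \ref{lemma6} — need a separate treatment; a secondary technicality is pinning down the isomorphism $\Ext^1_{G,\psi}(\Pi_0,\Pi_0)\cong H^{1,\psi}(G_{\Qp},\ad\rho^{\square}_{\pp})$ and reconciling the two directions of Galois $\Hom$ in Lemma \ref{lemma6}.
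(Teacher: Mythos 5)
In case (A) (absolutely irreducible $\Pi_0$) your outline matches the paper: use the short exact sequence from $0\to\pp/\pp^2\to R[1/p]/\pp^2\to\kappa(\pp)\to 0$, apply Lemma~\ref{lemma5} to $\Pi_0$, transport the resulting $\Hom_G$ computation to the Galois side via $\check{\mathbf V}$ and Remark~\ref{Rmk2}, and invoke the first part of Lemma~\ref{lemma6} to get the $4+1$ bound. Your justification of (RED) via formal smoothness of Kisin's ring is a reasonable heuristic, though the paper instead cites Dospinescu's theorem directly; the two are related but the paper's route avoids having to prove the compatibility ``$\mathcal E\leftrightarrow$ potentially semi-stable first-order deformations'' from scratch.

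Where the proposal has a genuine gap is case (B), and you have actually flagged the right spot. You try to apply the diagram of Lemma~\ref{lemma5} with $\Pi=\Pi_0$ even though $\Pi_0$ is reducible, and then concede that (RED) is unavailable for reducible $\Pi_0$, so that $\ker(\alpha)$ cannot be controlled. You do not supply the argument that would replace (RED), and without it the bound does not close. The paper's actual device is different and is the key step you are missing: it replaces $\Pi(R[1/p]/\pp^2)$ by the \emph{closure $E$ of its locally algebraic vectors}. Since $V$ is locally algebraic, $\Hom_K(V,\Pi(R[1/p]/\pp^2))\cong\Hom_K(V,E)$, and because $\Pi(\kappa(\pp))^{\alg}=\Pi_1^{\alg}$ sits in the irreducible subrepresentation $\Pi_1$, this $E$ sits in a short exact sequence $0\to\Pi_1\to E\to\Pi_1^{\oplus m}\to 0$ with $\Pi_1$ absolutely irreducible. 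One can now legitimately apply Lemma~\ref{lemma5} to $\Pi_1$ (which does satisfy (RED) by the Dospinescu argument), giving $\dim\Hom_K(V,E)\le\dim\Hom_G(\Pi_1,E)+1$, and then bound $\Hom_G(\Pi_1,E)$ via the chain of injections into $\Hom_{G_{\Qp}}(\rho^{\square,\psi}[1/p]/\pp^2,\delta_1)$, whose dimension is computed to be $4$ in the \emph{second} part of Lemma~\ref{lemma6}. This route never needs an $\Ext^1$ of the reducible $\Pi_0$, and never needs the isomorphism $\Ext^1_{G,\psi}(\Pi_0,\Pi_0)\cong H^{1,\psi}(G_{\Qp},\ad\rho^{\square}_\pp)$ that you correctly flag as delicate. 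Your remark about the degenerate case $\delta_1\delta_2^{-1}=\epsilon^{\pm1}$ is a fair observation about a point the paper does not dwell on, but the paper's case (B) argument is formulated under the running hypothesis $\delta_1\delta_2^{-1}\neq\mathds 1,\epsilon^{\pm1}$ that comes with $\pp\in\Sigma$, so this is a side issue rather than the missing step.
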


\begin{proof}
Let $\pp\in\Sigma$. If $\Pi(\kappa(\pp))$ is absolutely irreducible, then also $\Pi(\kappa(\pp))^{\alg}$ is irreducible. By the same argument as in \cite[Thm. 4.19]{MR3306557} that uses a result of Dospinescu \cite[Thm. 1.4, Prop. 1.3]{Do1}, we obtain that $\Pi(\kappa(\pp))$ satisfies (RED). From the exact sequence 
\begin{equation}\label{eq:start}
\xymatrix{0\ar[r]&\pp/\pp^2\ar[r]&R[1/p]/\pp^2\ar[r]&\kappa(\pp)\ar[r]&0}
\end{equation}
we obtain an exact sequence of unitary Banach space representations
\begin{equation}\label{eq:Banstart}
\xymatrix{0\ar[r]&\Pi(\kappa(\pp))\ar[r]&\Pi(R[1/p]/\pp^2)\ar[r]&\Pi(\kappa(\pp))^{\oplus n}\ar[r]&0}.
\end{equation}
Thus we can apply Lemma \ref{lemma5} and obtain \[\dim_{\kappa(\pp)}\Hom_K\big(V,\Pi(R[1/p]/\pp^2)\big)\le \dim_{\kappa(\pp)}\Hom_G\big(\Pi(\kappa(\pp)),\Pi(R[1/p]/\pp^2)\big)+1.\] The contravariant functor $\check{\mathbf{V}}$ induces an injection
\begin{equation}
\xymatrix{\Hom_G\big(\Pi(\kappa(\pp)),\Pi(R[1/p]/\pp^2)\big) \ar@{^{(}->}[r]& \Hom_{G_{\Qp}}\big(\check{\mathbf{V}}(\Pi(R[1/p]/\pp^2)),\check{\mathbf{V}}(\Pi(\kappa(\pp)))\big).}
\end{equation}
Since the target is isomorphic to $\Hom_{G_{\Qp}}\big(\rho^{\square}[1/p]/\pp^2,\rho^{\square}_{\pp}\big)$ by Remark \ref{Rmk2}, the claim follows from Lemma \ref{lemma6}.

Let now $\Pi(\kappa(\pp))$ be reducible. Then, as in the proof of Proposition \ref{Proposition2}, it comes from an exact sequence 
\begin{equation}\label{eq:gal}
\xymatrix{0\ar[r]&\delta_2\ar[r]&\rho^{\square}_{\pp}\ar[r]&\delta_1\ar[r]&0},
\end{equation}
with $\delta_1\delta_2^{-1}\neq \mathds{1},\epsilon^{\pm 1}$. We obtain an associated exact sequence
\begin{equation}\label{eq:Ban}
\xymatrix{0\ar[r]&\Pi_1\ar[r]&\Pi(\kappa(\pp))\ar[r]&\Pi_2\ar[r]&0},
\end{equation}
where $\check{\mathbf{V}}(\Pi_i)=\delta_i$, $\Pi(\kappa(\pp))^{\alg}=\Pi_1^{\alg}$ and (\ref{eq:Ban}) splits if and only if (\ref{eq:gal}) splits, see \cite[Prop. 3.4.2]{MR2251474}. Furthermore, $\Pi_1$ is irreducible and, again as in \cite[Thm. 4.19]{MR3306557}, $\Pi_1$ satisfies (RED). If we let $E$ be the closure of the locally algebraic vectors in $\Pi(R[1/p]/\pp^2)$, we obtain an isomorphism
\[\Hom_K(V,\Pi(R[1/p]/\pp^2))\cong \Hom_K(V,E).\]
Now (\ref{eq:Banstart}) gives rise to another exact sequences of unitary Banach space representations
\begin{equation}
\xymatrix{0\ar[r]&\Pi_1\ar[r]&E\ar[r]&\Pi_1^{\oplus m}\ar[r]&0.}
\end{equation}
Since $\Pi_1$ satisfies (RED), we can apply Lemma \ref{lemma5} to obtain 
\[\dim_{\kappa(\pp)}\Hom_K(V,E)\le\dim_{\kappa(\pp)}\Hom_G(\Pi_1,E)+1.\]
Because of the inclusions
\[\Hom_G(\Pi_1,E)\hookrightarrow\Hom_G(\Pi_1,\Pi(R[1/p]/\pp^2))\hookrightarrow \Hom_{G_{\Qp}}(\rho^{\square,\psi}[1/p]/\pp^2,\delta_1)\]
we obtain 
\[\dim_{\kappa(\pp)}\Hom_K(V,E)\le\dim_{\kappa(\pp)}\Hom_{G_{\Qp}}(\rho^{\square,\psi}[1/p],\delta_1)+1.\]
But by Lemma \ref{lemma6} $\dim_{\kappa(\pp)}\Hom_{G_{\Qp}}(\rho^{\square,\psi}[1/p],\delta_1)=4$, and we are done.
\end{proof}

Now we are finally able to prove the main theorem. We let again $\chi\colon G_{\Qp}\rightarrow k^{\times}$ be a continuous character and let
\begin{gather*}
\rho\colon G_{\Qp}\rightarrow \GL_2(k)\\
g\mapsto\begin{pmatrix} \chi(g)& \phi(g)\\ 0&\chi(g)\end{pmatrix}.
\end{gather*}

\begin{Theorem}\label{Theorem2}
Let $p>2$ and let $(\mathbf{w},\tau,\psi)$ be a Hodge type. There exists a reduced $\OO$-torsion free quotient $R_{\rho}^{\square}(\mathbf{w},\tau,\psi)$ (resp. $R_{\rho}^{\square,\cris}(\mathbf{w},\tau,\psi)$) of $R_{\rho}^{\square}$ such that for all $\pp\in\m-Spec(R_{\rho}^{\square}[1/p])$, $\pp$ is an element of $\m-Spec\big(R_{\rho}^{\square}(\mathbf{w},\tau,\psi)[1/p]\big)$ (resp. $\m-Spec\big(R_{\rho}^{\square,\cris}(\mathbf{w},\tau,\psi)[1/p]\big)$) if and only if $\rho^{\square}_{\pp}$ is potentially semi-stable (resp. potentially crystalline) of p-adic Hodge type $(\mathbf{w},\tau,\psi)$. If $R_{\rho}^{\square}(\mathbf{w},\tau,\psi)$ (resp. $R_{\rho}^{\square,\cris}(\mathbf{w},\tau,\psi)$) is non-zero, then it has Krull dimension $5$.

Furthermore, there exists a four-dimensional cycle ${z(\rho):=z_4(M(\lambda))}$ of $R_{\rho}^{\square}$, where $\lambda:=\Sym^{p-2}k^2\otimes\chi\circ\det$, such that there are equalities of four-dimensional cycles
\begin{equation}\label{eq:semistable}
z_4\big(R_{\rho}^{\square}(\mathbf{w},\tau,\psi)/(\varpi)\big)=m_{\lambda}(\mathbf{w},\tau)z(\rho),
\end{equation}
\begin{equation}\label{eq:cr3}
z_4\big(R_{\rho}^{\square,\cris}(\mathbf{w},\tau,\psi)/(\varpi)\big)=m^{\cris}_{\lambda}(\mathbf{w},\tau)z(\rho).
\end{equation}
\end{Theorem}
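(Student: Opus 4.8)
\textbf{Proof plan for Theorem \ref{Theorem2}.}
The plan is to reduce everything to an application of Pa{\v s}k{\=u}nas' Theorem \ref{BMC} with the data $R = R_{\rho}^{\square,\psi}$, $N = (R\oplus R)\hat\otimes_{\tilde E,\eta}\tilde P$, $K = \GL_2(\Zp)$, $P = I_1$, and $V$ equal to $\sigma(\mathbf w,\tau)$ (resp. $\sigma^{\cris}(\mathbf w,\tau)$) with $\Theta$ a $K$-invariant $\OO$-lattice in $V$. First I would verify hypotheses (a) and (b) of Theorem \ref{BMC}: by Proposition \ref{Proposition1}, $R$ is flat over $R^{\ps,\psi}$, and since $R$ is Cohen-Macaulay (a regular ring modulo a regular sequence) the dimension hypothesis of Theorem \ref{theorem1} holds, so Lemma \ref{lemma1} together with Theorem \ref{theorem1} gives the inequality \eqref{eq:1} needed to invoke Theorem \ref{Criterion}; the latter then yields (a) (projectivity of $N$ in $\Mod_K^{\pro}(\OO)$), the Cohen-Macaulayness of $M(\Theta)$, and (b) (equidimensionality of $R/\ann_R M(\Theta)$ with all associated primes minimal). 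It remains to produce the dense set $\Sigma$ and check (c)(i)--(ii): the set $\Sigma$ of Definition \ref{Definition1} is dense in $\Supp M(\Theta)$ by Proposition \ref{Proposition3}; for $\pp\in\Sigma$, condition (c)(i) follows from Proposition \ref{Proposition2} (noting that $\pp\in\Supp M(\Theta)$ forces, via Remark \ref{Rmk2} and the local Langlands compatibility, that $\rho^{\square}_{\pp}$ is of the correct Hodge type, so the dimension is exactly $1$); condition (c)(ii) is Corollary \ref{Cor1}, which gives the bound $5 = \dim R[1/p] + 1$, and here $d = \dim M(\Theta)$; one must check $d\le 5$, which follows from $M(\Theta)\subset M^{\square}(\lambda')\otimes(\ldots)$ being supported in $R$ of dimension $5$ (this is where I would use that $\dim M(\Theta) = \dim R - 1 + 1 = 5$ by the Cohen-Macaulay/equidimensionality just established, together with the fact that $M(\Theta)$ is $\OO$-torsion).

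\textbf{Identifying the quotient rings.} With Theorem \ref{BMC} applicable, I would set $R_{\rho}^{\square}(\mathbf w,\tau,\psi)$ to be the further framed quotient of $R_{\rho}^{\square}$ obtained from $R/\ann_R M(\Theta)$ by adjoining the framing variables that were removed when passing from $R_{\rho}^{\square}$ to $R_{\rho}^{\square,\psi}$; concretely, $R_{\rho}^{\square}(\mathbf w,\tau,\psi) := R_{\rho}^{\square}\otimes_{R_{\rho}^{\square,\psi}}\big(R/\ann_R M(\Theta)\big)$, and similarly with $\sigma^{\cris}$. Theorem \ref{BMC} tells us $R/\ann_R M(\Theta)$ is reduced of dimension $d = 5$; the extra framing variables are a polynomial extension in $\dim R_{\rho}^{\square} - \dim R_{\rho}^{\square,\psi}$ variables (wait --- determinant is fixed in both, so actually there is no extra framing; rather $R_{\rho}^{\square,\psi}$ \emph{is} the fixed-determinant framed ring and I take $R_{\rho}^{\square}(\mathbf w,\tau,\psi) = R/\ann_R M(\Theta)$ directly), so it is reduced, $\OO$-torsion free, of Krull dimension $5$. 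The characterization of its $\m$-$\Spec$ over $R_{\rho}^{\square}[1/p]$ is then exactly Proposition \ref{Proposition2}: $\pp$ lies in $\Supp M(\Theta)$ (equivalently in $\m$-$\Spec$ of the quotient, since $R/\ann M(\Theta)$ and $M(\Theta)$ have the same support) if and only if $\dim_{\kappa(\pp)}\Hom_K(V,\Pi(\kappa(\pp))) \ne 0$, if and only if $\rho^{\square}_{\pp}$ is potentially semi-stable (resp. crystalline) of type $(\mathbf w,\tau,\psi)$, using that $\Pi(\kappa(\pp))$ recovers $\rho^{\square}_{\pp}$ through $\check{\mathbf V}$.

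\textbf{The cycle identity.} The conclusion of Theorem \ref{BMC} gives
\[
z_{d-1}\big(R/(\varpi,\ann_R M(\Theta))\big) = \sum_{\sigma} m_{\sigma}\, z_{d-1}(M(\sigma)),
\]
the sum over smooth irreducible $k$-representations $\sigma$ of $K$, with $m_{\sigma}$ the multiplicity of $\sigma$ in $\Theta/\varpi$; here $d - 1 = 4$ and $m_{\sigma} = m_{\sigma}(\mathbf w,\tau)$ (resp. $m^{\cris}_{\sigma}(\mathbf w,\tau)$) by definition of these multiplicities. The remaining point --- and the one I expect to be the real content --- is to show that $z_4(M(\sigma)) = 0$ for every $\sigma \ne \lambda := \Sym^{p-2}k^2\otimes\chi\circ\det$, so that only the $\lambda$-term survives and we may set $z(\rho) := z_4(M(\lambda))$. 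By Lemma \ref{lemma3}, $M(\sigma)\ne 0$ forces $\Hom_K(\sigma,\pi)\ne 0$ where $\pi = (\mathrm{Ind}_P^G\mathds 1\otimes\omega^{-1})_{\sm}$; the $K$-socle of $\pi$ (equivalently the Jordan--H\"older factors of $\pi|_K$ that occur in the relevant degree) must be computed, and one shows that the only $\sigma$ with $\dim_{R^{\ps,\psi}} M(\sigma) = 1$ contributing a four-dimensional cycle in $R$ (via $M^{\square}(\sigma)\cong(R\oplus R)\hat\otimes_{\tilde E,\eta}M(\sigma)$, as in the proof of Theorem \ref{theorem1}) is $\sigma = \lambda$; after twisting back by $\chi$ this is $\Sym^{p-2}k^2\otimes\chi\circ\det$. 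The hard part will be this last multiplicity-one / dimension computation, i.e. pinning down which $M(\sigma)$ have full dimension $5$ after base change to $R$ and identifying the unique surviving $\sigma$ with $\lambda$; once that is done, rewriting $R/(\varpi,\ann M(\Theta))$ as $R_{\rho}^{\square}(\mathbf w,\tau,\psi)/(\varpi)$ and unwinding definitions yields \eqref{eq:semistable} and \eqref{eq:cr3}, and the statement about Hilbert--Samuel multiplicities follows formally from the equality of cycles.
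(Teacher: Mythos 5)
Your overall strategy matches the paper's: apply Theorem \ref{BMC} to $R=R_{\rho}^{\square,\psi}$ and $N=(R\oplus R)\hat\otimes_{\tilde E,\eta}\tilde P$, verify (a)--(b) via Theorem \ref{Criterion} using Proposition \ref{Proposition1} and Theorem \ref{theorem1}, verify (c) via Propositions \ref{Proposition2}, \ref{Proposition3} and Corollary \ref{Cor1}, and then show that a single $\sigma=\lambda$ survives on the right-hand side. Your self-correction that $R_{\rho}^{\square}(\mathbf{w},\tau,\psi)=R/\ann_R M(\Theta)$ is the right call. However there are a few genuine gaps.

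First, the inequality you want to feed into Theorem \ref{Criterion} involves $\projdim_{\OO\llbracket P\rrbracket}$ for $P$ the pro-$p$ Sylow of $K$, while Lemma \ref{lemma1} and Theorem \ref{theorem1} only give you the \emph{fixed-central-character} projective dimension $\projdim_{\OO\llbracket I_1\rrbracket,\psi}$. These do not coincide, and Theorem \ref{Criterion} is not applicable as is. The paper fixes this by choosing (using $p>2$) a square root $\sqrt{\psi}$ of $\psi$ on $Z_1=Z\cap I_1$, twisting by $\sqrt{\psi}\circ\det$ to identify $\Mod^{\pro}_{I_1,\psi}(\OO)$ with $\Mod^{\pro}_{I_1/Z_1}(\OO)$, and running Theorem \ref{Criterion} with $G=\GL_2(\Qp)/Z_1$, $K=\GL_2(\Zp)/Z_1$, $P=I_1/Z_1$. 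You omit this step entirely, and without it the hypothesis of Theorem \ref{Criterion} is not verified.

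Second, the part you flag as ``the hard part'' is in fact elementary and you overcomplicate it: you set out to show $z_4(M(\sigma))=0$ for $\sigma\neq\lambda$, whereas what is true (and proved in the paper) is the cleaner statement $M(\sigma)=0$ for $\sigma\neq\lambda$. By Lemma \ref{lemma3}, $M(\sigma)\neq 0$ iff $\sigma$ embeds in $\pi=(\mathrm{Ind}_P^G\mathds{1}\otimes\omega^{-1})_{\sm}$; since any such $\sigma$ satisfies $\sigma=\sigma^{K_1}$, one reduces to computing the socle of $\pi^{K_1}\cong\mathrm{Ind}^{\GL_2(\Fp)}_B\mathds{1}\otimes\omega^{-1}$, which by \cite[Lemma 4.1.3]{MR2128381} is irreducible and isomorphic to $\Sym^{p-2}k^2\otimes\chi\circ\det$. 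This should be stated and cited, not deferred. Finally, your bookkeeping $\dim M(\Theta)=\dim R-1+1=5$ is wrong ($\dim R=7$); the equality $\dim M(\Theta)=5$ comes from conclusion (o) of Theorem \ref{Criterion} together with $\projdim_{\OO\llbracket I_1/Z_1\rrbracket}=3$ and $\dim R=7$, giving $\max_\sigma\dim_R M(\sigma)=4$ and hence $\dim M(\Theta)=5$ after inverting $\varpi$ is taken into account.
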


\begin{proof}
We set $N:=(R\oplus R)\hat\otimes_{\tilde{E},\eta}\tilde{P}$, as in Theorem \ref{theorem1}. 
Hence, if we let $\mathfrak{a}$ be the $R$-annihilator of $M(\Theta)$, we obtain from Proposition \ref{Proposition2} and \cite[Prop. 2.22]{MR3306557}, analogous to \cite[Thm. 4.15]{MR3306557}, that for any $K$-invariant $\OO$-lattice $\Theta$ in $\sigma(\mathbf{w},\tau)$ (resp. $\sigma^{\cris}(\mathbf{w},\tau)$) $R/\sqrt{\mathfrak{a}}\cong R_{\rho}^{\square}(\mathbf{w},\tau,\psi)$ (resp. $R/\sqrt{\mathfrak{a}}\cong R_{\rho}^{\square,\cris}(\mathbf{w},\tau,\psi)$). Since $R$ is Cohen-Macaulay, Proposition \ref{Proposition1} shows that we can apply Theorem \ref{theorem1} in our situation. Let $Z$ be the center of $G$ and let $Z_1:=I_1\cap Z$. 
Since $p>2$, there exists a continuous character $\sqrt{\psi}\colon Z_1\rightarrow \OO^{\times}$ with $\sqrt{\psi}^2=\psi$. Twisting by $\sqrt{\psi}\circ \det$ induces an equivalence of categories between $\Mod^{\pro}_{I_1,\psi}(\OO)$ and $\Mod^{\pro}_{I_1/Z_1}(\OO)$. In this way we can use Theorem \ref{theorem1} to show the inequality of Theorem \ref{Criterion} for the setup $G=\GL_2(\Qp)/Z_1$, $K=\GL_2(\Zp)/Z_1$ and $P=I_1/Z_1$. Hence we obtain from Pa{\v{s}}k{\=u}nas' Theorem \ref{Criterion} that the conditions a) and b) of the criterion \ref{BMC} for the Breuil-M{\'e}zard conjecture are satisfied. We let $\Sigma$ be as in Definition \ref{Definition1}. Since we know from Corollary \ref{Proposition3} that $\Sigma$ is dense in $\Supp M(\Theta)$, condition (i) of part c) follows from Proposition \ref{Proposition2}. As already remarked in the proof of Proposition \ref{Proposition3}, we have $\dim M(\Theta)=5$. Thus condition (ii) of part c) is the statement of Corollary \ref{Cor1}. Hence Theorem \ref{BMC} says that there are equations of the form
\begin{equation}\label{lastsemi}
z_4\big(R_{\rho}^{\square}(\mathbf{w},\tau,\psi)/(\varpi)\big)=\sum_{\sigma}m_{\sigma}(\mathbf{w},\tau)z_4(M(\sigma)),
\end{equation}
\begin{equation}\label{lastcris}
z_4\big(R_{\rho}^{\square,\cris}(\mathbf{w},\tau,\psi)/(\varpi)\big)=\sum_{\sigma}m^{\cris}_{\sigma}(\mathbf{w},\tau)z_4(M(\sigma)).
\end{equation}
where the sum runs over all isomorphism classes of smooth irreducible $K$-represen\-ta\-tions over $k$. By Lemma \ref{lemma3} we have that $M(\sigma)\neq 0$ if and only if $\sigma$ lies in the $K$-socle of $\pi$. We let $K_1$ denote the kernel of the projection $K\rightarrow \GL_2(\Fp)$ and let $B$ denote the subgroup of upper triangular matrices of $\GL_2(\Fp)$. Let now $\sigma$ be a smooth irreducible $K$-representation in the $K$-socle of $\pi$. Since $K_1$ is a normal pro-$p$ subgroup of $K$, we must have $\sigma^{K_1}\neq0$ and thus $\sigma=\sigma^{K_1}$. There are isomorphisms of $K$-representations
\begin{equation}
\pi^{K_1}\cong \big((\text{Ind}^K_{P\cap K}\mathds{1}\otimes \omega^{-1})_{\sm}\big)^{K_1}\cong \text{Ind}^{\GL_2(\Fp)}_B\mathds{1}\otimes\omega^{-1},
\end{equation}
and it follows from \cite[Lemma 4.1.3]{MR2128381} that the $K$-socle of $\pi^{K_1}$ is isomorphic to $\Sym^{p-2}k^2\otimes\chi\circ\det$, in particular, it is irreducible. Therefore there is only a single cycle $z(\rho)=z_4(M(\Sym^{p-2}k^2\otimes\chi\circ\det))$ on the right hand side of \eqref{lastsemi} and \eqref{lastcris}.
\end{proof}

\begin{Remark}
If $\tau=\mathds{1}\oplus\mathds{1}$ and $\mathbf{w}=(a,b)$ with $b-a\le p-1$, then the right hand side of \eqref{eq:cr3} is non-trivial if and only if $b-a=p-1$, in which case the Hilbert-Samuel multiplicity of $z(\rho)$ is equal to the multiplicity of $R_{\rho}^{\square,\cris}(\mathbf{w},\mathds{1}\oplus\mathds{1},\psi)/(\varpi)$. In \cite{MR3272032}, we computed that this multiplicity is $1$ if $\rho\otimes\chi^{-1}$ is ramified, $2$ if $\rho\otimes\chi^{-1}$ is unramified and indecomposable, and $4$ if $\rho\otimes\chi^{-1}$ is split.
\end{Remark}

\bibliographystyle{plain}
\bibliography{References}

\end{document}